\numberwithin{equation}{section}
\newcommand{\R}{\mathbb{R}}
\newtheorem{theorem}{Theorem}[section]
\newtheorem{lemma}[theorem]{Lemma}
\newtheorem{proposition}[theorem]{Proposition}
\newtheorem{definition}[theorem]{Definition}
\def\g{\gamma}
\def\d{\delta}
\def\f{\frac}
\def\b{\bar}
\newcommand{\dd}{{\rm d}}
\renewcommand{\S}{\mathbb{S}}
\newcommand{\Fi}{\mathbf{1}}
\newcommand{\CD}{\mathcal {D}}
\newcommand{\CV}{\mathcal{V}}
\newcommand{\CS}{\mathcal{S}}
\newcommand{\om}{\omega}
\newcommand{\la}{\lambda}
\newcommand{\si}{\sigma}
\newcommand{\pa}{\partial}
\newcommand{\eps}{\epsilon}
\newcommand{\Ga}{\Gamma}
\begin{document}
\title[A moving boundary problem on Boltzmann]{Solutions to  a moving boundary problem on the Boltzmann equation}

\author[R.-J. Duan]{Renjun Duan}
\address[R.-J. Duan]{Department of Mathematics, The Chinese University of Hong Kong, Hong Kong}
\email{rjduan@math.cuhk.edu.hk}

\author[Z. Zhang]{Zhu Zhang}
\address[Z. Zhang]{Department of Applied Mathematics, The Hong Kong Polytechnic University, Hong Kong}
\email{zhuama.zhang@polyu.edu.hk}

\begin{abstract}
Motivated by the numerical investigation by Aoki et al. \cite{AKFG}, we study a rarefied gas flow between two parallel infinite plates of the same temperature governed by the Boltzmann equation with diffuse reflection boundaries, where one plate is at rest and the other one oscillates in its normal direction periodically in time. For such boundary-value problem, we establish the existence of a time-periodic solution with the same period, provided that the amplitude of the oscillating boundary is suitably small.  The positivity of the solution is also proved basing on the study of its large-time asymptotic stability for the corresponding initial-boundary value problem. For the proof of existence, we develop uniform estimates on the approximate solutions in the time-periodic setting and make a bootstrap argument by reducing the coefficient of the extra penalty term from a large enough constant to zero. 

\vspace{3mm}
\begin{center}
\it Dedicated to Professor Zhouping Xin on the occasion of his 65th birthday
\end{center} 
\end{abstract}

\subjclass[2020]{35Q20, 76P05, 35B10, 35B40, 35B45}
\keywords{Boltzmann equation, moving boundary, diffuse reflection boundary, time-periodic solution, asymptotic stability, a priori estimates}
\date{\today}
\maketitle

\tableofcontents

\thispagestyle{empty}

\section{Introduction}

\subsection{Problem}
We consider the motion with slab symmetry for a rarefied gas flow between two infinite plates parallel to each other in space. The left plate located at $x=0$ is stationary, while the right one with the position $X_w(t)$ and the normal velocity $V_w(t)=\dot{X}_w(t)$ oscillates near $x=1$  time-periodically with a period $T>0$ in its normal direction; see the recent work \cite{TA} and \cite{AKFG} by Aoki together with his collaborators. If the distance between two plates is comparable to the mean free path of gas particles, the continuum formulation is no longer valid to model such a situation and a kinetic description is necessary. In the kinetic setting, the problem can be reduced to the following moving boundary problem on the Boltzmann equation:
\begin{align}\label{1.1.1}
\pa_tF+v_1\pa_xF=\f1{{\rm Kn}}Q(F,F).
\end{align}
Here, the unknown $F=F(t,x,v)\geq 0$ stands for the density distribution function of gas particles with position $x\in \Omega(t):=(0,X_w(t))$ and velocity $v=(v_1,v_2,v_3)\in \R^3$ at time $t\in \R$. The constant ${\rm Kn}>0$ is the Knudsen number proportional to the mean free path of gas particles. The collision term $Q(F,F)$ acts on velocity variable $v$ only, and it takes the bilinear form:
\begin{align}
Q(F_1,F_2)&=\int_{\mathbb{R}^3}\int_{\mathbb{S}^2} B(v-u,\om)\left[F_1(u')F_2(v')
-F_1(u)F_2(v)\right]\,\dd\omega\dd u.\nonumber
\end{align}
In the above integral the post-collision velocity pair $(v',u')$ and the pre-collision velocity pair $(v,u)$ satisfy the relation
\begin{equation}
v'=v-[(v-u)\cdot\omega]\,\omega,\quad u'=u+[(v-u)\cdot\omega]\,\omega,\nonumber
\end{equation}
with $\om\in \S^2$, according to conservations of momentum and energy of two particles before and after the collision:
\begin{equation}
v+u=v'+u',\quad 
|v|^2+|u|^2=|v'|^2+|u'|^2.\nonumber
\end{equation}
The collision kernel $B(v-u,\om)$ models the intermolecular interaction. In this paper, we consider the hard sphere model, namely,
$B(v-u,\om)=|(v-u)\cdot \omega|$. 

To solve \eqref{1.1.1}, one has to supplement a boundary condition. We assume that on the surface of the plates, the gas particles undergo the diffuse reflection given by
\begin{equation}\label{1.1.2}\left\{
\begin{aligned}
&F(t,0,v)|_{v_1>0}=\sqrt{2\pi}\mu(v)\int_{v_1<0}F(t,0,v)|v_1|\,\dd v,\\
&F(t,X_{w}(t),v)|_{v_1<V_{w}(t)}=\sqrt{2\pi}\mu_{w}(t,v)\int_{v_1>V_w(t)}F(t,X_{w}(t),v)|v_1-V_{w}(t)|\,\dd v,
\end{aligned}\right.
\end{equation}
where two Maxwellians 
$$
\mu(v)=\frac{1}{(2\pi)^{\f32}}e^{-\f{|v|^2}{2}}, 
\quad \mu_w(t,v)=\frac{1}{(2\pi)^{\f32}}e^{-\f{|v_1-V_w(t)|^2+|v_2|^2+|v_3|^2}{2}}
$$ 
correspond to 
the boundary thermal equilibrium  on the left and right plates,  respectively. Notice that from \eqref{1.1.1} and \eqref{1.1.2}, one has the following conservation of mass:
$$
\f{\dd}{\dd t}\int_0^{X_w(t)}\int_{\mathbb{R}^3}F(t,x,v)\,\dd v\dd x=0,
$$
for any $t\geq 0$. Without loss of generality, we assume that for $t\geq 0$,
\begin{align}\label{mass}
\int_0^{X_w(t)}\int_{\mathbb{R}^3}F(t,x,v)\,\dd v\dd x\equiv 1.
\end{align}

The moving boundary problem is fundamental in kinetic theory and can be applied to various physical contexts. For example, if  one regards the oscillating plate as a micro-mechanical structure, it can be used to model the gas vibrating in micro-electro-mechanical systems devices, see Desvillettes and Lorenzani \cite{DL}. If considering the oscillating plate as a sound source and the stationary plate as the receptor, it relates to the  propagation of nonlinear sound waves, cf.~Kalempa and Sharipov \cite{KS}. Due to its importance, such a problem has received much attention and is treated by lots of numerical methods, for instance, the semi-Lagrangian method Russo and Filbet \cite{RF}.  We also would mention that an accurate numerical analysis on this problem has been investigated in Aoki et al \cite{AKFG} in the small Knudsen number regime (${\rm Kn}\ll1$). In the present paper, we are interested in a regime where ${\rm Kn}=O(1)$. In this regime, a numerical analysis was carried out and a detailed description of solution behavior such as momentum and energy transfer was given in Tsuji and Aoki \cite{TA}. It is worthy to point out that the numerical results in \cite{AKFG} show that the flow field eventually approaches a time-periodic state in large time. However, there is still a lack of mathematical justification of the existence of such a time-periodic state. The main objective in the present paper is to study this problem with a focus on the existence of time-periodic states in the case when the right plate has a small time-periodic oscillation around a fixed position.

The time periodic problem is one important topic in both kinetic theory and gas dynamics. We recall that the existence and stability of time-periodic solutions for the Navier-Stokes equations in different settings were investigated, for instance, see Beir\~{a}o da Veiga \cite{B}, Feireisal et al \cite{FMPS}, Tsuda \cite{T}, and Valli and Zajaczkowski \cite{VZ}, and the references therein. For the Boltzmann equation with a time-periodic inhomogeneous source, the issue was studied in Ukai \cite{U} and Ukai and Yang \cite{UY}. When one takes into account the effect of a time-periodic external force in the whole space, the problem was partially solved in \cite{DUYZ}, subject to a restriction assumption that the spatial dimensions are not less than five, while so far it has remained open in the physical three-dimensional case. Recently, a result on this problem  was obtained for the Boltzmann equation in general 3D bounded domains with time-periodic boundary conditions \cite{DWZ}, where the boundedness of the domain inducing the fast enough time-decay plays a role. Thus, the time-periodic problem in an unbounded domain with boundaries is still very  challenging. Finally, we remark that recently there have been externsive studies on the steady solutions to Boltzmann equations and corresponding hydrodynamic limits, cf.~\cite{AEMN,CK,DLiu,DLY,EGKM,EGKM2,EGM1,EGMW}.

\subsection{Main result}
The main result of the present paper is stated as follows.

\begin{theorem}\label{thm1.1}
Set ${\rm Kn}=1$. Let
\begin{equation}
\label{def.vw}
w(v)=(1+|v|^2)^{\beta/2}e^{\f{q|v|^2}{4}}
\end{equation} 
be a velocity weight function with $\beta>3$ and $0\leq q<1$. Assume that $X_{w}(t)\in C^2(\R)$ is periodic in time with the period $T>0$. There are constants $\delta_0>0$ and $\hat{C}>0$ such that if 
$$
\delta:=\|X_w-1\|_{C^2}=\sup_{t\in \R} (|X_w(t)-1|+|X_w'(t)|+|X_w''(t)|)\leq \delta_0,
$$
then the moving boundary problem on the Boltzmann equation \eqref{1.1.1}, \eqref{1.1.2}, and \eqref{mass} admits a unique mild solution (see definition \ref{def1.1} later) 
\begin{equation}
\label{thm.adF}
F(t,x,v)=M(t,x,v)+\sqrt{M(t,x,v)}f(t,x,v)\geq 0,
\end{equation}
which is time periodic with the same period $T>0$ and satisfies
\begin{align}\label{1.1.3}
\sup_{t\in \mathbb{R},x\in\Omega(t),v\in\mathbb{R}^3}\left|wf(t,x,v)\right|\leq \hat{C}\delta.
\end{align}
Here the local Maxwellian in \eqref{thm.adF} is given as 
$$
M(t,x,v):=\f{1}{(2\pi)^{3/2}}e^{-\f{|v_1X_{w}(t)-xV_{w}(t)|^2+|v_2|^2+|v_3|^2}{2}}.
$$
\end{theorem}

In what follows, we briefly state the key procedure in the proof of the above result. Note that the problem \eqref{1.1.1}, \eqref{1.1.2} and \eqref{mass} is postulated under a moving frame. By making a change of variables \eqref{1.2.1}, it is more conveniently reformulated as a time-periodic problem in a fixed interval with an external time-periodic force. Such a driving force is produced by the effect of the right oscillating boundary. Thus, the transformation \eqref{1.2.1} is a basic point in the proof.

To prove the existence of time-periodic solutions, we develop new estimates compared to the steady problem. The first step is to establish some a priori $L^\infty$-estimates on the solutions. This is achieved by the $L^2$-$L^\infty$ interplay approach developed by Guo \cite{Guo2} and Esposito et al \cite{EGKM,EGKM2}, with an extra effort on treating the external force. A suitable smallness condition on the amplitude and frequency of the boundary oscillation induces the smallness of the such an external force, which is crucial for closing the $L^\infty$-estimates. 

The second step is to suitably design a sequence of approximate solutions, which is quite different with those in \cite{DWZ} and \cite{EGKM,EGKM2}. In fact, we should point out that in order to derive $L^\infty$ bounds of approximate solutions in terms of inhomogeneous data, it is necessary to first prove that the approximate solutions have the finite $L^\infty$ norm; see \cite{DW}. For instance, for a nonnegative quantity $A$ which may be infinite and for a positive finite quantity $B$, the estimate $A\leq \frac{1}{2}A+B$ does not imply $A\leq 2B$ in case when the possibility of $A=\infty$ is not excluded. Therefore one has to be careful in both constructing approximate solutions and obtaining their $L^\infty$ bounds.  One key point is to start from solving  the following problem with a penalty term in case when its magnitude $\la=\lambda_0>0$ is large enough:
\begin{align*}
\pa_tf^{n+1,\lambda} +v_1\pa_xf^{n+1,\lambda}+\sqrt{\mu}^{-1}G(t,x)\pa_{v_1}(\sqrt{\mu}f^{n+1,\lambda})+[\nu+\lambda]f^{n+1,\lambda}= Kf^{n,\lambda}+g.
\end{align*}
The appearance of $\lambda>0$ is to ensure the 
total mass condition \eqref{mass}  and the largeness of $\lambda $ at the present step is crucial for obtaining the uniform-in-$n$ estimates on $L^2$ norm of $f^{n,\lambda}$. After taking the limit $n\rightarrow \infty$, by the bootstrap argument in our recent work  \cite{DHWZ,DWZ} we are able to to further construct solutions by reducing $\lambda=\lambda_0$ to $\lambda=0$. In the end, we establish the non-negativity of the time-periodic solution by showing that it is exponentially stable under the moving frame.


The rest of this paper is organized as follows. In Section \ref{sec2}, we summarize for the later use some basic facts including the  reformulation of the original problem and some related estimates on the collision operator. Section \ref{sec3} is the main part devoted to proving the existence of time-periodic solutions. In Section \ref{sec4} we establish the exponential asymptotic stability of the time-periodic solution of the reformulated problem which implies the non-negativity of the time-periodic solution.

\section{Preliminary}\label{sec2}

\subsection{Reformulation}
To remove the difficulty due to the time-dependent spatial domain, we introduce the following new coordinates:
\begin{align}\label{1.2.1}
\b{t}:=\int_0^tX_w^{-2}(\tau)\dd \tau,\  
\b{x}:=\f{x}{X_w(t)},\ 
\b{v}_1:= v_1X_w(t)-xV_w(t), \ 
\b{v}_i:=v_i,\ i=2,3.
\end{align}
Then \eqref{1.1.1}, \eqref{1.1.2}, {\eqref{mass}} can be rewritten as:
\begin{align}\label{1.2.2}
\pa_{\b{t}}F+\b{v}_1\pa_{\b{x}}F+G(\b{t},\b{x})\pa_{\b{v}_1}F=Q(F,F),\  \b{t}\in \mathbb{R}, \ \b{x}\in {\Omega}:= (0,1),\ \b{v}\in \mathbb{R}^3,
\end{align}
with the boundary conditions
\begin{equation}\label{1.2.3}\left\{
\begin{aligned}
&F(\b{t},0,\b{v})|_{\b{v}_1>0}=\sqrt{2\pi}\b{\mu}_{w}(\b{t},\b{v})\int_{\b{v}_1<0}F(\b{t},0,\b{v})|\b{v}_1|\,\dd \b{v},\\
&F(\b{t},1,\b{v})|_{\b{v}_1<0}=\sqrt{2\pi}\b{\mu}_{w}(\b{t},\b{v})\int_{\b{v}_1>0}F(\b{t},1,\b{v})|\b{v}_1|\,\dd \b{v},
\end{aligned}\right.
\end{equation}
where  we have denoted
$$
G(\b{t},\b{x})=-\b{x}X^3_w\dot{V}_w[t(\b{t})],\quad 
\b{\mu}_w(\b{t},\b{v})=\f{1}{(2\pi)^{3/2}X_w^2}e^{-\f{\left|\f{\b{v}_1}{X_w}\right|^2+|\b{v}_2|^2+|\b{v}_3|^2}{2}},
$$
and the total-mass condition holds true:
\begin{align}\label{mass1}
\int_0^1\int_{\mathbb{R}^3}F(\b{t},\b{x},\b{v})\dd \b{x}\dd \b{v}\equiv 1.
\end{align}
Note that $G(\b{t},\b{x})$, $X_w(t(\b{t}))$ and $V_{w}(t(\b{t}))$ are all time periodic functions with period $\b{T}=\int_0^TX^{-2}_{w}(t)\dd t.$ For convenience, we call the BVP \eqref{1.1.1}, \eqref{1.1.2}, \eqref{mass} as {\it Problem 1} and the BVP \eqref{1.2.2}, \eqref{1.2.3}, \eqref{mass1} as {\it Problem 2}. The equivalence between {\it  Problem 1} and {\it Problem 2} will be given in Lemma \ref{lm2.1.1}.

\subsection{Characteristics}
For any $(\b{t},\b{x},\b{v})\in\mathbb{R}\times (0,1)\times \mathbb{R}^3$, we define 1-d characteristics $[\b{X}(\b{s};\b{t},\b{x},\b{v}),\b{V}(\b{s};\b{t},\b{x},\b{v})]$ for {\it Problem 2} as the solution of the following ODEs
\begin{equation}\label{2.1.1}
\left\{
\begin{aligned}
&\f{\dd \b{X}(\b{s};\b{t},\b{x},\b{v})}{\dd \b{s}}=\b{V}_1(\b{s};\b{t},\b{x},\b{v}),\\
&\f{\dd \b{V}_1(\b{s};\b{t},\b{x},\b{v})}{\dd s}=G(\b{s},\b{X}(\b{s};\b{t},\b{x},\b{v})),\\
&\f{\dd \b{V}_2(\b{s};\b{t},\b{x},\b{v})}{\dd s}=\f{\dd \b{V}_3(\b{s};\b{t},\b{x},\b{v})}{\dd s}=0,\\
&[\b{X}(\b{s};\b{t},\b{x},\b{v}),V(\b{s},\b{t},\b{x},\b{v})]_{\b{s}=\b{t}}=[\b{x},\b{v}].
\end{aligned}
\right.
\end{equation}
We define the {\it backward exit time} $\b{t}_{\mathbf{b}}(\b{t},\b{x},\b{v})\geq 0$ to be the last moment at which the
back-time characteristics $\b{X}(\b{s};\b{t},\b{x},\b{v})$ remains in $(0,1)$, that is,
\begin{equation}
\b{t}_{\mathbf{b}}(\b{t},\b{x},\b{v})=\sup \{\b{s}\geq 0:\b{X}(\b{\tau};\b{t},\b{x},\b{v})\in(0,1)\text{ for any $\b{t}-\b{s}<\b{\tau}<\b{t}$ }\}.\nonumber
\end{equation}
We also define
\begin{equation}
\b{x}_\mathbf{b}(\b{t},\b{x},\b{v})=\b{X}(\b{t}-\b{t}_{\mathbf{b}};\b{t},\b{x},\b{v})\in \{0,1\} ,\quad \b{v}_\mathbf{b}(\b{t},\b{x},\b{v})=\b{V}(\b{t}-\b{t}_{\mathbf{b}};\b{t},\b{x},\b{v}).\nonumber
\end{equation}
For {\it Problem 1}, we can also define the characteristics $$[X(s;t,x,v),V(s;t,x,v)]=[x-(t-s)v_1,v]$$ for $(t,x,v)\in \mathbb{R}\times \Omega(t)\times \mathbb{R}^3$, which is the solution to the ODEs
$$
\left\{
\begin{aligned}
&\f{\dd X(s;t,x,v)}{\dd s}=V_1(s;t,x,v),\\
&\f{\dd V_1(s;t,x,v)}{\dd s}=\f{\dd V_2(s;t,x,v)}{\dd s}=\f{\dd V_3(s;t,x,v)}{\dd s}=0,\\
&[X(s;t,x,v),V(s,t,x,v)]_{s=t}=[x,v].
\end{aligned}
\right.
$$
Similarly, we also define
$$t_{\mathbf{b}}(t,x,v)=\sup \{s\geq 0:X(\tau;t,x,v)\in\Omega(\tau)\text{ for any $t-s<\tau<t$ }\},
$$
and
$$x_\mathbf{b}(t,x,v)=X(t-t_{\mathbf{b}};t,x,v).
$$
\begin{definition}\label{def1.1}
We say that $F(t,x,v)$ is a mild solution to Problem 1 if the following things are satisfied:

(1) for any $(t,x,v)\in \mathbb{R}\times \Omega(t)\times \mathbb{R}^3$ with $t_{\mathbf{b}}(t,x,v)>0$ and any $t-{t}_{\mathbf{b}}(t,x,v)<{s}< {t}$, 
$$
F({s}):= F({s},{X}({s};{t},{x},{v}),{V}(s;{t},{x},{v}))
$$ 
is differentiable with respect to ${s}$ and
$$
\f{\dd F({s})}{\dd{s}}=Q(F,F)(s).
$$

(2) $F({t},{x},{v})$ satisfies the boundary condition \eqref{1.1.2}  and the total-mass condition \eqref{mass}.
\end{definition}

Similarly, by using the characteristics \eqref{2.1.1}, we can define the mild solution to {\it Problem 2} as follows:
\begin{definition}
We say that $F(\b{t},\b{x},\b{v})$ is a mild solution to Problem 2 if the following things are satisfied:

(1) for any $(\b{t},\b{x},\b{v})\in \mathbb{R}\times (0,1)\times \mathbb{R}^3$ with $\b{t}_{\mathbf{b}}(\b{t},\b{x},\b{v})>0$ and any $\b{t}-\b{t}_{\mathbf{b}}(\b{t},\b{x},\b{v})<\b{s}< \b{t}$, $$F(\b{s}):= F(\b{s},\b{X}(\b{s};\b{t},\b{x},\b{v}),\b{V}(\b{s};\b{t},\b{x},\b{v}))$$ is differentiable with respect to $\b{s}$ and
$$\f{\dd F(\b{s})}{\dd \b{s}}=Q(F,F)(\b{s}).
$$

(2) $F(\b{t},\b{x},\b{v})$ satisfies the boundary condition \eqref{1.2.3} and the total-mass condition \eqref{mass1}.
\end{definition}

\begin{lemma}\label{lm2.1.1}(Equivalence between Problem 1 and Problem 2)
If $\b{F}(\b{t},\b{x},\b{v})$ is a mild solution to Problem 2, then $F(t,x,v):= \b{F}(\varphi(t,x,v))$ is a mild solution to Problem 1 and vice versa. Here the mapping
$$
\begin{aligned}
\varphi:\mathbb{R}\times \Omega(t)\times \mathbb{R}^3&\mapsto \mathbb{R}\times(0,1)\times \mathbb{R}^3\\
(t,x,v)&\mapsto\varphi(t,x,v)=(\b{t},\b{x},\b{v})
\end{aligned}
$$
is defined in \eqref{1.2.1}. Moreover, if $\b{F}(\b{t},\b{x},\b{v})$ is time-periodic with period $\b{T}>0$, then $F(t,x,v)$ is also time-periodic with period $T$, where $T$ is determined by the relation
$$
\b{T}=\int_0^TX_{w}^{-2}(t)\dd t.
$$
\end{lemma}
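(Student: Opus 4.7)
The plan is to verify each component of the mild-solution definition separately, using that the map $\varphi$ from \eqref{1.2.1} is a $C^1$-diffeomorphism of $\mathbb{R}\times\Omega(t)\times\mathbb{R}^3$ onto $\mathbb{R}\times(0,1)\times\mathbb{R}^3$ under the standing assumption $X_w>0$ (guaranteed by $\|X_w-1\|_{C^2}\le\delta_0<1$). A direct computation shows that the Jacobian of the spatial-velocity part $(x,v)\mapsto(\bar{x},\bar{v})$ at fixed $t$ is block-triangular with diagonal factors $1/X_w$, $X_w$, $1$, $1$, and hence equals $1$. Consequently $d\bar{x}\,d\bar{v}=dx\,dv$, and the total-mass condition \eqref{mass} transfers immediately to \eqref{mass1} whenever $F=\bar{F}\circ\varphi$.

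I would next verify that characteristics correspond. Starting from a straight-line characteristic $(X(s),V(s))=(x-(t-s)v_1,v)$ of Problem~1 and setting $\bar{s}=\int_0^s X_w^{-2}(\tau)\,d\tau$, $\bar{X}(\bar{s})=X(s)/X_w(s)$, $\bar{V}_1(\bar{s})=v_1 X_w(s)-X(s)V_w(s)$, $\bar{V}_i=v_i$, the identities $\dot{X}_w=V_w$ and $\dot{X}=v_1$ reduce the chain rule to
\begin{align*}
\frac{d\bar{X}}{d\bar{s}} &= X_w^2\cdot\frac{v_1 X_w-X(s)V_w}{X_w^2} = \bar{V}_1,\\
\frac{d\bar{V}_1}{d\bar{s}} &= X_w^2\cdot\bigl(-X(s)\dot{V}_w\bigr) = -\bar{X}(\bar{s})X_w^3\dot{V}_w = G(\bar{s},\bar{X}(\bar{s})),
\end{align*}
so the curve $(\bar{X},\bar{V})$ solves \eqref{2.1.1}; inverting $\varphi$ runs the argument backwards. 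Since $\bar{X}(\bar{s})\in(0,1)$ iff $X(s)\in(0,X_w(s))$, the backward exit times are related by $\bar{t}_{\mathbf b}=\int_{t-t_{\mathbf b}}^{t}X_w^{-2}\,d\tau$. Combined with the chain-rule identity $dF/ds=X_w^{-2}(s)\,d\bar{F}/d\bar{s}$ along these characteristics and the corresponding transformation of the collision operator implicit in the derivation of \eqref{1.2.2}, the ODE $dF(s)/ds=Q(F,F)(s)$ in the sense of Definition~\ref{def1.1} transfers to $d\bar{F}(\bar{s})/d\bar{s}=Q(\bar{F},\bar{F})(\bar{s})$, and conversely.

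For the boundary conditions at $\bar{x}=0$ one has $\bar{v}_1=v_1 X_w$ so the signs of $\bar{v}_1$ and $v_1$ agree, while $d\bar{v}=X_w\,dv$ and $|\bar{v}_1|=X_w|v_1|$ produce a factor $X_w^2$ in the diffuse integral that exactly cancels the $X_w^{-2}$ prefactor in $\bar{\mu}_w$; the exponent in $\bar{\mu}_w$ reduces to that of $\mu(v)$ via $\bar{v}_1/X_w=v_1$, so the first line of \eqref{1.2.3} becomes the first line of \eqref{1.1.2}. At $\bar{x}=1$ the same computation uses $\bar{v}_1/X_w=v_1-V_w$, flipping the threshold from $\bar{v}_1<0$ to $v_1<V_w$ and producing $\mu_w(t,v)$ in place of $\mu(v)$. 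For time-periodicity, $T$-periodicity of $X_w$ gives $\bar{t}(t+T)=\bar{t}(t)+\bar{T}$ by integrating $X_w^{-2}$ over one period, and since $\bar{x},\bar{v}$ depend on $t$ only through the $T$-periodic $X_w,V_w$, one finds $\varphi(t+T,x,v)=(\bar{t}(t)+\bar{T},\bar{x},\bar{v})$, whence $F(t+T,x,v)=\bar{F}(\bar{t}+\bar{T},\bar{x},\bar{v})=\bar{F}(\bar{t},\bar{x},\bar{v})=F(t,x,v)$ for any $\bar{T}$-periodic $\bar{F}$. The main technical delicacy lies in tracking the various $X_w$-factors in the diffuse-reflection integral; the specific form of $\bar{\mu}_w$ in \eqref{1.2.3} is designed precisely to absorb them.
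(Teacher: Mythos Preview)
Your proof is correct and follows essentially the same approach as the paper: the core step in both is to verify by direct chain-rule computation that the image under $\varphi$ of a straight-line characteristic of Problem~1 solves the characteristic ODE \eqref{2.1.1} of Problem~2, from which the correspondence of backward exit times and the transfer of the mild-solution ODE along characteristics (via $dF/ds = X_w^{-2}\,d\bar F/d\bar s$ and the matching rescaling of $Q$) follow. The paper simply declares the verification of the boundary condition, the mass condition, and periodicity to be ``straightforward,'' whereas you spell out the $X_w$-factor bookkeeping in the diffuse-reflection integrals and the Jacobian identity $d\bar x\,d\bar v = dx\,dv$; this is extra detail rather than a different method.
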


\begin{proof}
We first claim that
\begin{align}\label{2.1.2}
(\b{s},\b{X}(\b{s};\b{t},\b{x},\b{v}),\b{V}(\b{s};\b{t},\b{x},\b{v}))
&=\varphi(s,X(s;t,x,v),V(s;t,x,v)).
\end{align}
In fact, a direct computation shows that
\begin{align}
\f{\dd \overline{X(s;t,x,v)}}{\dd \b{s}}=\f{\dd\overline{X(s;t,x,v)}}{\dd s}\cdot\f{\dd s}{\dd \b{s}}=\overline{V_1(s;t,x,v)},\nonumber
\end{align}
and
\begin{align}
\f{\dd \overline{V_1(s;t,x,v)}}{\dd \b{s}}=\f{\dd\overline{V_1(s;t,x,v)}}{\dd s}\cdot\f{\dd s}{\dd \b{s}}=-\overline{X(s;t,x,v)}X_w^3(s)\dot{V}_w(s)=G(\b{s},\overline{X(s;t,x,v)}).\nonumber
\end{align}
Hence $[\overline{X(s;t,x,v)},\overline{V(s;t,x,v)}]$ solves the characteristic ODEs \eqref{2.1.1}. Since it is deterministic, $$[\overline{X(s;t,x,v)},\overline{V(s;t,x,v)}]\equiv[\b{X}(\b{s};\b{t},\b{x},\b{v}),\b{V}(\b{s};\b{t},\b{x},\b{v})].$$ This proves \eqref{2.1.2}.

Let $\b{F}(\b{t},\b{x},\b{v})$ be the mild solution to Problem 2. For any $(t,x,v)\in \mathbb{R}\times \Omega(t)\times \mathbb{R}^3,$
we have 
$$
X(\tau;t,x,v)\in \Omega(t)\text{ iff }\overline{X(\tau;t,x,v)}\in (0,1),\text{ iff }\b{X}(\b{s};\b{t},\b{x},\b{v})\in (0,1),
$$ 
which implies that
$$
\overline{t_{\mathbf{b}}(t,x,v)}=\b{t}_{\mathbf{b}}(\varphi(t,x,v)).
$$ 
Then for any $(t,x,v)\in \mathbb{R}\times \Omega(t)\times \mathbb{R}^3,$
 and any $t_{\mathbf{b}}(t,x,v)<s<t$, we have $\b{t}_{\mathbf{b}}(\varphi(t,x,v))<\b{s}<\b{t}$. We hence compute
$$
\f{\dd F(s)}{\dd s}=\f{\dd \b{F}}{\dd \b{s}}\cdot\f{\dd{\b{s}}}{\dd s}=Q(\b{F},\b{F})(\b{s})\cdot X^{-2}_w(s)=Q(F,F)(s).
$$
The verification of boundary conditions, total-mass condition \eqref{mass} and periodicity are straightforward. The proof of Lemma \ref{lm2.1.1} is complete.
\end{proof}

To the end we are devoted for constructing the time-periodic solution to {\it Problem 2}. For simplicity of notation, we drop `bar' in the sequel. The following lemma shows the time-periodicity of $t_{\mathbf{b}}$, $x_{\mathbf{b}}$ and $v_{\mathbf{b}}$, which is crucial for the construction of time-periodic solutions to \eqref{1.2.2}, \eqref{1.2.3}, \eqref{mass1} later on.

\begin{lemma}\label{lm2.1.2}
Let $G(t,x)$ be a time-periodic function with period $T>0$ and 
$$
[X(s;t,x,v), V(s;t,x,v)]
$$ 
be the solution to the characteristic ODEs \eqref{2.1.1}. Then we have
\begin{align}\label{2.1.3}
X(s+T;t+T,x,v)=X(s;t,x,v), \quad V(s+T;t+T,x,v)=V(s;t,x,v).
\end{align}
Moreover, $t_{\mathbf{b}}(t,x,v)$, $x_{\mathbf{b}}(t,x,v)$ and $v_{\mathbf{b}}(t,x,v)$ are all time periodic functions with the same period $T.$
\end{lemma}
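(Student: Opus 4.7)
The plan is to prove \eqref{2.1.3} by uniqueness of solutions to the characteristic system \eqref{2.1.1} exploiting the $T$-periodicity of $G$, and then to deduce the periodicity of $t_{\mathbf{b}}, x_{\mathbf{b}}, v_{\mathbf{b}}$ by a direct substitution in their defining formulas.

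First, I would set $\widetilde{X}(s):=X(s+T;t+T,x,v)$ and $\widetilde{V}(s):=V(s+T;t+T,x,v)$ and verify by the chain rule that
\begin{align*}
\frac{\dd\widetilde X}{\dd s}(s)=\widetilde V_1(s),\quad \frac{\dd\widetilde V_1}{\dd s}(s)=G\bigl(s+T,\widetilde X(s)\bigr)=G\bigl(s,\widetilde X(s)\bigr),\quad \frac{\dd\widetilde V_i}{\dd s}=0\ (i=2,3),
\end{align*}
where the second equality uses $G(s+T,\cdot)=G(s,\cdot)$. Moreover $[\widetilde X(t),\widetilde V(t)]=[X(t+T;t+T,x,v),V(t+T;t+T,x,v)]=[x,v]$. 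Thus $(\widetilde X,\widetilde V)$ satisfies exactly the ODE system \eqref{2.1.1} with the same initial datum as $[X(s;t,x,v),V(s;t,x,v)]$, so uniqueness of ODE solutions (guaranteed by Lipschitz continuity of $G$ in $x$, which follows from the $C^2$ hypothesis on $X_w$) gives \eqref{2.1.3}.

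For the periodicity of the exit-time quantities, I would rewrite the defining supremum for $t_{\mathbf{b}}(t+T,x,v)$ and change variables $\tau\mapsto\tau+T$: the condition $X(\tau;t+T,x,v)\in(0,1)$ for all $t+T-s<\tau<t+T$ becomes, using \eqref{2.1.3}, the condition $X(\tau';t,x,v)\in(0,1)$ for all $t-s<\tau'<t$. Hence the two sets in the suprema coincide and $t_{\mathbf{b}}(t+T,x,v)=t_{\mathbf{b}}(t,x,v)$. Substituting into the definitions of $x_{\mathbf{b}}$ and $v_{\mathbf{b}}$ and applying \eqref{2.1.3} once more,
\begin{align*}
x_{\mathbf{b}}(t+T,x,v)=X\bigl(t+T-t_{\mathbf{b}}(t,x,v);t+T,x,v\bigr)=X\bigl(t-t_{\mathbf{b}}(t,x,v);t,x,v\bigr)=x_{\mathbf{b}}(t,x,v),
\end{align*}
and likewise for $v_{\mathbf{b}}$.

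There is no real obstacle here; the only mild subtlety is ensuring the well-posedness/uniqueness of \eqref{2.1.1} so that the identification $\widetilde X\equiv X(\cdot;t,x,v)$ is legitimate. This is immediate since the regularity $X_w\in C^2$ imposed in Theorem \ref{thm1.1} makes $G(\b t,\b x)=-\b x X_w^3\dot V_w$ Lipschitz in $\b x$, uniformly in $\b t$. The rest of the argument is a bookkeeping translation of the periodicity of $G$ to that of the flow and of the exit data.
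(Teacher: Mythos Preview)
Your proof is correct and essentially identical to the paper's: both define the shifted flow $[\widetilde X,\widetilde V]$, check it satisfies \eqref{2.1.1} with the same data by $T$-periodicity of $G$, invoke uniqueness to get \eqref{2.1.3}, and then read off the periodicity of $t_{\mathbf{b}},x_{\mathbf{b}},v_{\mathbf{b}}$ by the same change of variable $\tau\mapsto\tau-T$ in the defining supremum. Your explicit mention of the Lipschitz regularity of $G$ to justify uniqueness is a welcome addition that the paper leaves implicit.
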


\begin{proof}
Define $[X_T(s),V_{T}(s)]:=[X(s+T;t+T,x,v),V(s+T;t+T,x,v)]$. Then $$[X_T(s),V_T(s)]_{s=t}=[x,v].$$
By using time-periodicity of $G(t,x)$, it is straightforward to verify that
$$\f{\dd X_T(s)}{\dd s}=V_T(s),\quad \f{\dd V_T(s)}{\dd s}=G(s+T,X_T(s))=G(s,X_T(s)).
$$
That means that $[X_T(s),V_T(s)]$ satisfies the characteristic ODEs \eqref{2.1.1}. Therefore, \eqref{2.1.3} follows. We also have
\begin{align}
t_{\mathbf{b}}(t+T,x,v)&=\sup \{s\geq 0:X(\tau;{t+T},{x},{v})\in(0,1)\text{ for any ${t}+T-{s}<{\tau}<{t}+T$}\}\nonumber\\
&=\sup \{s\geq 0:X(\tau-T;{t},{x},{v})\in(0,1)\text{ for any ${t}+T-{s}<{\tau}<{t}+T$}\}\nonumber\\
&=\sup \{s\geq 0:X(\tau;{t},{x},{v})\in(0,1)\text{ for any ${t}-{s}<{\tau}<{t}$}\}\nonumber\\
&=t_{\mathbf{b}}(t,x,v),\nonumber\\
x_{\mathbf{b}}(t+T,x,v)&=X(t+T-t_\mathbf{b};t+T,x,v)=X(t-t_{\mathbf{b}};t,x,v)=x_{\mathbf{b}}(t,x,v),\nonumber\\ v_{\mathbf{b}}(t+T,x,v)&=V(t+T-t_\mathbf{b};t+T,x,v)=V(t-t_{\mathbf{b}};t,x,v)=v_{\mathbf{b}}(t,x,v).\nonumber
\end{align}
This completes the proof of Lemma \ref{lm2.1.2}.
\end{proof}
\subsection{Estimates on collision operators}
Fix a global Maxwellian $\mu(v)$. Recall the linearized collision operator
\begin{align}
Lf:=-\f{1}{\sqrt{\mu}}[Q(\mu,\sqrt{\mu}f)+Q(\sqrt{\mu}f,\mu)]:=\nu(v)f-Kf,\nonumber
\end{align}
where
$$\nu(v)=\int_{\mathbb{R}^3}\int_{\mathbb{S}^2}B(v-u,\omega)\mu(u)\,\dd\omega\dd u\sim 1+|v|,
$$
and $K=K_1-K_2$  is defined by
\begin{align}
(K_1f)(v)&=\int_{\mathbb{R}^3}\int_{\mathbb{S}^2}B(v-u,\omega)\sqrt{\mu(v)\mu(u)}f(u)\,\dd\omega\dd u,\nonumber\\
(K_2f)(v)&=\int_{\mathbb{R}^3}\int_{\mathbb{S}^2}B(v-u,\omega)\sqrt{\mu(u)\mu(u')}f(v')\,\dd\omega\dd u\notag\\
&\qquad+\int_{\mathbb{R}^3}\int_{\mathbb{S}^2}B(v-u,\omega)\sqrt{\mu(u)\mu(v')}f(u')\,\dd\omega\dd u.\nonumber
\end{align}
The nonlinear term $\Ga(f,g)=\Ga_+(f,g)-\Ga_-(f,g)$ is defined as
\begin{align*}
\Gamma_+(f,g)
=\f1{\sqrt{\mu}}Q_+(\sqrt{\mu}f,\sqrt{\mu}g),\quad \Ga_-(f,g)=\f1{\sqrt{\mu}}Q_-(\sqrt{\mu}f,\sqrt{\mu}g).
\end{align*}

\begin{lemma}[\cite{Guo2}]
The operator $L$ is self-adjoint and non-negative on $L^2_v$. The kernel of $L$ is a five-dimensional space spanned by the following bases: 
$$
\chi_0:=\sqrt{\mu},\quad \chi_i:=v_i\sqrt{\mu},\quad i=1,2,3,\quad \chi_4:=\f{|v|^2-3}{\sqrt{6}}\sqrt{\mu}.
$$
Define the projection $P$:
\begin{align}\label{P}
Pf:=\sum_{i=0}^4\langle f,\chi_i\rangle\chi_i.
\end{align}
Then there exists a constant $c_0>0$ such that
\begin{align}\label{c}
\langle Lf,f\rangle\geq c_0|\nu^{1/2}(I-P)f|_{L^{2}(\mathbb{R}^3)}^2.
\end{align}
\end{lemma}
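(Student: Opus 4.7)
The plan is to proceed in three stages: establish self-adjointness and non-negativity of $L$ from a symmetrization identity, identify the five-dimensional kernel via collision invariants, and derive the coercive lower bound by a compactness-based contradiction argument.

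First I would establish a symmetrized representation of $\langle Lf,g\rangle$. Exploiting the invariance of the measure $B(v-u,\omega)\,\dd\omega\,\dd u\,\dd v$ under the pre/post-collision exchange $(v,u)\leftrightarrow(v',u')$ and under the swap $(v,u)\leftrightarrow(u,v)$, together with the Maxwellian identity $\mu(v)\mu(u)=\mu(v')\mu(u')$, one may rewrite
\[
\langle Lf,g\rangle=\frac{1}{4}\int_{\R^3}\!\int_{\R^3}\!\int_{\S^2}B(v-u,\omega)\sqrt{\mu(u)\mu(v)}\,\CD[f]\,\CD[g]\,\dd\omega\,\dd u\,\dd v,
\]
where
\[
\CD[f]:=\frac{f(v')}{\sqrt{\mu(v')}}+\frac{f(u')}{\sqrt{\mu(u')}}-\frac{f(v)}{\sqrt{\mu(v)}}-\frac{f(u)}{\sqrt{\mu(u)}}.
\]
This expression is manifestly symmetric in $(f,g)$, so $L$ is self-adjoint on $L^2(\R^3)$, and setting $g=f$ immediately yields $\langle Lf,f\rangle\geq 0$.

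Second, vanishing of $\langle Lf,f\rangle$ forces $\CD[f]\equiv 0$ on the collision manifold, so that $f/\sqrt{\mu}$ is a classical collision invariant. Boltzmann's theorem identifies the space of such invariants as $\mathrm{span}\{1,v_1,v_2,v_3,|v|^2\}$; orthonormalizing these against the weight $\sqrt{\mu}$ in $L^2_v(\R^3)$ produces precisely the basis $\chi_0,\chi_1,\chi_2,\chi_3,\chi_4$ (the Hermite-type polynomials of degrees $0$, $1$, and $2$ times $\sqrt{\mu}$) and identifies the operator $P$ defined in \eqref{P} as the orthogonal projection onto $\ker L$.

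The main obstacle is the spectral gap \eqref{c}. I would write $L=\nu(v)-K$ and first establish that $K$ is compact on $L^2(\R^3)$; this follows from Grad's classical decomposition of the gain-type operator, which presents $K$ as an explicit integral operator with rapidly decaying Gaussian factors and verifies, after a cutoff in the relative velocity, that the cutoff piece is Hilbert-Schmidt with the remainder small in operator norm. Given this compactness, I would argue by contradiction: if no such $c_0>0$ existed, there would be a sequence $(f_n)\subset (I-P)L^2(\R^3)$ with $|\nu^{1/2}f_n|_{L^2}=1$ and $\langle Lf_n,f_n\rangle\to 0$. Extract a subsequence converging weakly to some $f$ in the $\nu$-weighted $L^2$ space; compactness of $K$ upgrades this to $\langle Kf_n,f_n\rangle\to\langle Kf,f\rangle$, while weak lower semicontinuity of $|\nu^{1/2}\cdot|_{L^2}^2$ combined with the decomposition $\langle Lf_n,f_n\rangle=|\nu^{1/2}f_n|_{L^2}^2-\langle Kf_n,f_n\rangle$ gives $\langle Lf,f\rangle\leq 0$. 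Since $L\geq 0$, this forces $f\in\ker L$; but $(I-P)f_n=f_n$ passes to the weak limit to give $Pf=0$, whence $f=0$. Substituting back yields $|\nu^{1/2}f_n|_{L^2}^2=\langle Kf_n,f_n\rangle+\langle Lf_n,f_n\rangle\to 0$, contradicting the normalization. The technical heart of the argument is the compactness of $K$; the remaining steps are routine once that is in hand.
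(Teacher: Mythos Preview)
Your proposal is a correct sketch of the classical proof: the symmetrization identity gives self-adjointness and non-negativity, Boltzmann's collision-invariant theorem identifies the kernel, and Grad's compactness of $K$ combined with a weak-limit contradiction yields the coercivity \eqref{c}. One minor point worth making explicit is that the closed subspace $(I-P)L^2$ is weakly closed, so the weak limit $f$ indeed satisfies $Pf=0$; otherwise the argument is complete.

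As for comparison: the paper does not supply its own proof of this lemma. It is stated with a citation to \cite{Guo2} and used as a black box throughout (see the invocations of \eqref{c} in \eqref{3.2.26} and \eqref{4.8}). So there is nothing in the paper to compare your argument against; you have simply filled in a standard result that the authors elected to quote.
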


\begin{lemma}[\cite{Gra,Guo2}]
It holds that $K$ is an integral operator given by
$$
Kf:=\int_{\mathbb{R}^3}k(v,u)\,\dd u,
$$
where
\begin{align}\label{k1}
	|k(v,u)|\leq C\left\{|v-u| +|v-u|^{-1} \right\}e^{-\f{|v-u|^2}{8}}e^{-\f{||v|^2-|u|^2|^2}{8|v-u|^2}},
\end{align}
for any $v, u\in \R^3$ with $v\neq u$.
Moreover, let $\beta\geq 0$ and $0\leq q\leq 1$. There is $C_{q,\beta}>0$ depending only on $\beta$ and $q$ such that
\begin{align}\label{k2}
\int_{\mathbb{R}^3} \left|k(v,u)\f{e^{\f{q|v|^2}{4}}}{e^{\f{q|u|^2}{4}}}\right|(1+|u|)^{-\beta}\,\dd\eta\leq C_{q,\beta}(1+|v|)^{-1-\beta},
\end{align}
for any $v\in \R^3$.
\end{lemma}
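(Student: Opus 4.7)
My plan is to follow the classical Grad-style derivation, which is the method used in \cite{Gra} and \cite{Guo2}. The first task is to derive the integral representation $Kf(v)=\int_{\R^3}k(v,u)f(u)\,\dd u$ together with the pointwise bound \eqref{k1}. For $K_1$, substituting $B(v-u,\om)=|(v-u)\cdot\om|$ and integrating out $\om\in\mathbb{S}^2$ yields a kernel of the form $k_1(v,u)=C|v-u|\sqrt{\mu(v)\mu(u)}$, which is directly controlled by the right-hand side of \eqref{k1}. For $K_2$, I would carry out Carleman's change of variables: for fixed $v$, convert the $(u,\om)$ integration into an integral over $u'$ on the half-plane orthogonal to $\eta:=v-v'=[(v-u)\cdot\om]\om$, picking up a Jacobian $|\eta|^{-2}$ from the delta function enforced by energy conservation. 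After rewriting in terms of $u$ one recovers the singular prefactor $|v-u|^{-1}$ from the projection of $\om$, while the combined Maxwellian factor $\sqrt{\mu(u)\mu(u')}$ reduces, via algebraic manipulation based on momentum and energy conservation, to the Gaussian exponent displayed in \eqref{k1}.

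For the weighted integral bound \eqref{k2}, the key algebraic step is to absorb the ratio $e^{q(|v|^2-|u|^2)/4}$ into the Gaussian kernel. Writing $|v|^2-|u|^2=(v-u)\cdot(v+u)$ and completing the square in the combined exponent, one shows that for each $q\in[0,1]$ there exists $c=c_q>0$ such that
\[
-\tfrac{|v-u|^2}{8}-\tfrac{(|v|^2-|u|^2)^2}{8|v-u|^2}+\tfrac{q}{4}(|v|^2-|u|^2)\le -c\,|v-u|^2-c\,\tfrac{(|v|^2-|u|^2)^2}{|v-u|^2}.
\]
Substituting this bound reduces \eqref{k2} to estimating
\[
\int_{\R^3}\bigl\{|v-u|+|v-u|^{-1}\bigr\}\,e^{-c|v-u|^2-c(|v|^2-|u|^2)^2/|v-u|^2}\,(1+|u|)^{-\beta}\,\dd u.
\]
Changing variables to $\eta=v-u$ and choosing spherical coordinates about the axis $v/|v|$, the factor $(|v|^2-|u|^2)^2/|v-u|^2=(2v\cdot\eta-|\eta|^2)^2/|\eta|^2$ confines $v\cdot\eta$ to a window of size $\sim|v|^{-1}|\eta|^2$; integrating out $\cos\theta$ against this Gaussian yields the desired $(1+|v|)^{-1}$ gain. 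On the effective integration region $|u|$ is comparable to $|v|$, which contributes the additional $(1+|v|)^{-\beta}$ factor, while on the complement where $|\eta|\gtrsim 1$ or $|u|\ll|v|$ the Gaussian decay absorbs every algebraic loss.

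The main technical obstacle is the degeneration of the constant $c_q$ as $q\to 1^-$ together with the careful region decomposition needed to extract the $(1+|v|)^{-1}$ gain near the diagonal $|v-u|\to 0$, where the $|v-u|^{-1}$ singularity is only barely integrable in three dimensions. Since the lemma is explicitly attributed to \cite{Gra,Guo2}, the cleanest practical route is to cite those references directly; the outline above merely indicates how the internal derivation proceeds.
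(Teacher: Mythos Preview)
Your proposal is correct and matches the paper's approach: the paper does not prove this lemma at all but simply cites \cite{Gra,Guo2}, exactly as you recommend in your final paragraph. Your accompanying sketch of the Grad--Carleman derivation and the weighted-kernel estimate is an accurate summary of what those references contain, so there is nothing to add or correct.
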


\begin{lemma} [\cite{Guo2}]
The nonlinear term $\Gamma$ satisfies that for $\beta\geq 0$ and $0\leq q<1$,
\begin{align}\label{g}
\|\nu^{-1}\Gamma(f,g)\|_{L^{\infty}}\leq C\|wf\|_{L^\infty}\|wg\|_{L^\infty}.
\end{align}
\end{lemma}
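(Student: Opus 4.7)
The plan is to split $\Gamma=\Gamma_+-\Gamma_-$ and estimate each piece pointwise in $v$, then divide by $\nu(v)\sim 1+|v|$. In both pieces I will pull out the weighted $L^\infty$ norms by the pointwise bounds $|f(u)|\le \|wf\|_{L^\infty}/w(u)$ and $|g(u)|\le \|wg\|_{L^\infty}/w(u)$, reducing the claim to scalar integral estimates against the hard-sphere kernel $B(v-u,\omega)=|(v-u)\cdot\omega|$ and the Maxwellian $\mu$.

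For the loss term
\begin{equation*}
\Gamma_-(f,g)(v)=g(v)\int_{\R^3}\int_{\S^2}|(v-u)\cdot\omega|\sqrt{\mu(u)}f(u)\,\dd\omega\dd u,
\end{equation*}
the estimate is immediate: I would use $|g(v)|\le \|wg\|_{L^\infty}/w(v)$, $|f(u)|\le \|wf\|_{L^\infty}/w(u)$, and note that $\sqrt{\mu(u)}/w(u)\le C e^{-(1+q)|u|^2/4}$ is Gaussian-integrable. Consequently $\int_{\R^3}|v-u|\sqrt{\mu(u)}/w(u)\,\dd u\le C(1+|v|)\sim C\nu(v)$, which yields $|\nu(v)^{-1}\Gamma_-(f,g)(v)|\le C\|wf\|_{L^\infty}\|wg\|_{L^\infty}/w(v)$.

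For the gain term I would first use the identity $\mu(u')\mu(v')=\mu(u)\mu(v)$ (conservation of energy together with the definition of $\mu$) to rewrite
\begin{equation*}
\Gamma_+(f,g)(v)=\int_{\R^3}\int_{\S^2}|(v-u)\cdot\omega|\sqrt{\mu(u)}f(u')g(v')\,\dd\omega\dd u.
\end{equation*}
The key is a lower bound $w(u')w(v')\ge c\,w(v)\,e^{q|u|^2/4}$. The exponential factor is exact, since $e^{q(|u'|^2+|v'|^2)/4}=e^{q(|u|^2+|v|^2)/4}$ by energy conservation. For the polynomial factor $(1+|u'|^2)^{\beta/2}(1+|v'|^2)^{\beta/2}$, by the symmetry $u'\leftrightarrow v'$ I may assume $|v'|\ge |u'|$, whence $|v'|^2\ge (|u|^2+|v|^2)/2\ge |v|^2/2$, so $(1+|v'|^2)^{\beta/2}\ge c(1+|v|^2)^{\beta/2}$, while the other factor is at least $1$ because $\beta\ge 0$. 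Hence
\begin{equation*}
|f(u')g(v')|\le \frac{C\,e^{-q|u|^2/4}}{w(v)}\|wf\|_{L^\infty}\|wg\|_{L^\infty},
\end{equation*}
and the remaining integral $\int_{\R^3}|v-u|\sqrt{\mu(u)}e^{-q|u|^2/4}\,\dd u\le C(1+|v|)\sim C\nu(v)$. Dividing by $\nu(v)$ and using $w(v)\ge 1$ gives the asserted bound.

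The only (mild) obstacle is the weight transfer $w(u')w(v')\gtrsim w(v)\,e^{q|u|^2/4}$ in the gain term: the Gaussian part is free from energy conservation, but transferring the polynomial weight from the post-collision velocities $(u',v')$ back to $v$ requires the case split on $\max(|u'|,|v'|)$ together with the hypothesis $\beta\ge 0$; the condition $0\le q<1$ is then used implicitly throughout to keep the residual Gaussian factors integrable.
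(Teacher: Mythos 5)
Your proof is correct and follows essentially the standard argument from Guo's cited paper: split $\Gamma$ into gain and loss parts, pull out the weighted sup norms, and reduce to Gaussian moment bounds against the hard-sphere kernel, with the only real work being the post-collision weight transfer $w(u')w(v')\gtrsim w(v)e^{q|u|^2/4}$ (exact on the Gaussian part by energy conservation, and on the polynomial part by observing $\max(|u'|,|v'|)^2\ge\tfrac12(|u|^2+|v|^2)\ge\tfrac12|v|^2$ together with $\beta\ge0$). One cosmetic remark: the restriction $q<1$ is not actually exercised in this estimate (integrability only needs $1+q>0$); it matters elsewhere, e.g.\ to keep $w\sqrt{\mu}$ bounded.
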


\subsection{An iteration lemma}
The following iteration lemma will be crucially used later.

\begin{lemma}[\cite{DHWZ}]
Consider a sequence $\{a_i\}_{i=0}^\infty $  with $a_i\geq0$ for $i=0,1,\cdots$. For any fixed $k\in\mathbb{N}_+$, we denote 
$$
A_i^k=\max\{a_i, a_{i+1},\cdots, a_{i+k}\}.
$$

\noindent{(1)} Assume $D\geq0$.  If $a_{i+1+k}\leq \f18 A_i^{k}+D$ for $i=0,1,\cdots$, then it holds that
\begin{equation}\label{A.1}
A_i^k\leq \left(\f18\right)^{\left[\frac{i}{k+1}\right]}\cdot\max\{A_0^k, \ A_1^k, \cdots, \ A_k^k \}+\f{8+k}{7} D,\quad\mbox{for}\quad i\geq k+1.
\end{equation}

\noindent{(2)} 
Let $0\leq \eta<1$ with $\eta^{k+1}\geq\frac14$.  If $a_{i+1+k}\leq \f18 A_i^{k}+C_k \cdot \eta^{i+k+1}$ for $i=0,1,\cdots$, then it holds that
\begin{align}\label{A.1-1}
A_i^k\leq \left(\f18\right)^{\left[\frac{i}{k+1}\right]}\cdot\max\{A_0^k, \ A_1^k, \cdots, \ A_k^k \}+2C_k\f{8+k}{7} \eta^{i+k},\quad\mbox{for}\quad i\geq k+1.
\end{align}
\end{lemma}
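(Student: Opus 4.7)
My plan for Part (1) is to work with the ``thickened'' sliding maximum
\[
c_i := \max_{0 \leq l \leq k} A_{i+l}^k = \max_{0 \leq j \leq 2k} a_{i+j},
\]
which is the natural object to iterate: the hypothesis controls each $a_{j+k+1}$ in terms of $A_j^k$, so bounding the next window $A_{i+k+1}^k$ unavoidably requires simultaneous control of the block $\{A_{i+m}^k\}_{0 \leq m \leq k}$, i.e., of $c_i$.

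The proof then proceeds via two elementary recursions. First, taking the max over $m \in [0,k]$ of the hypothesis applied at shift $i+m$ yields the cross-level bound
\[
A_{i+k+1}^k \leq \tfrac{1}{8}\, c_i + D.
\]
Second, since $c_{i+1}$ differs from $c_i$ only by dropping $a_i$ and admitting $a_{i+2k+1}$, and the new entry satisfies $a_{i+2k+1} \leq \tfrac{1}{8}\, c_i + D$, we obtain the ``almost-monotonicity''
\[
c_{i+1} \leq \max\bigl(c_i,\, \tfrac{1}{8}\, c_i + D\bigr) \leq c_i + D, \qquad \text{whence} \qquad c_{i+l} \leq c_i + l\, D \text{ for } 0 \leq l \leq k.
\]
Combining the two observations (apply the cross-level bound at each $i+m$, then use almost-monotonicity to bring $c_{i+m}$ back to $c_i$) gives the one-step contraction
\[
c_{i+k+1} \leq \tfrac{1}{8}\, \max_{0 \leq m \leq k} c_{i+m} + D \leq \tfrac{1}{8}\,(c_i + kD) + D = \tfrac{1}{8}\, c_i + \tfrac{8+k}{8}\, D.
\]
Iterating this along the subsequence $i = 0, k+1, 2(k+1), \ldots$ and summing the geometric series $\sum_{s\geq 0} (1/8)^s = 8/7$ gives $c_{j(k+1)} \leq (1/8)^j B + \tfrac{8+k}{7}\, D$ with $B = c_0 = \max\{A_0^k, \ldots, A_k^k\}$. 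For any $i \geq k+1$, choosing $j = [i/(k+1)]$ places $i$ in $[j(k+1), j(k+1)+k]$, so $A_i^k \leq c_{j(k+1)}$, which gives \eqref{A.1}.

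For Part (2), I would run precisely the same argument with $D$ replaced by the time-dependent inhomogeneity $C_k\,\eta^{i+k+1}$; both recursions go through verbatim because the weights are nonincreasing in $i$. The one-step contraction becomes $c_{(j+1)(k+1)} \leq \tfrac{1}{8}\, c_{j(k+1)} + \tfrac{8+k}{8}\, C_k\, \eta^{(j+1)(k+1)}$, and the hypothesis $\eta^{k+1} \geq 1/4$ is exactly what is needed so that the iterated sum $\sum_{s=1}^{j} (1/8)^{j-s} \eta^{s(k+1)}$ has successive-term ratio $8\eta^{k+1} \geq 2$; the series is then controlled by twice its last term, which produces the factor $2$ and the $\eta^{i+k}$-type behavior in \eqref{A.1-1}. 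The main obstacle throughout is managing the max-of-shifts on the right-hand side of the hypothesis: this is what forces the passage to the larger envelope $c_i$, and the mild monotonicity $c_{i+1} \leq c_i + D$ (resp.\ $c_i + C_k\, \eta^{i+k+1}$) is what keeps the lost constants down to the claimed $(8+k)/7$ rather than something growing in $k$.
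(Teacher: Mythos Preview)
The paper does not give its own proof of this lemma; it is cited from \cite{DHWZ}. So there is no in-paper argument to compare against, and the question is simply whether your proof is correct.

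Your Part (1) is correct and complete. The passage to the envelope $c_i=\max_{0\le j\le 2k}a_{i+j}$, the cross-level bound $A_{i+k+1}^k\le\tfrac18 c_i+D$, the almost-monotonicity $c_{i+1}\le c_i+D$, the resulting one-step contraction $c_{i+k+1}\le\tfrac18 c_i+\tfrac{8+k}{8}D$, and the geometric summation all check out and give exactly \eqref{A.1}.

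For Part (2) your strategy is right and produces a bound of the correct form, but there is a small gap at the very end. Your iteration gives
\[
c_{j(k+1)}\ \le\ \Big(\tfrac18\Big)^{j}c_0\;+\;\tfrac{8+k}{4}\,C_k\,\eta^{\,j(k+1)},
\]
and hence $A_i^k\le c_{j(k+1)}$ with $j=[i/(k+1)]$. The stated conclusion \eqref{A.1-1}, however, has $\eta^{\,i+k}$ in place of $\eta^{\,j(k+1)}$. Since $j(k+1)\le i\le i+k$ and $\eta<1$, one has $\eta^{\,j(k+1)}\ge \eta^{\,i+k}$, so your bound is \emph{larger} than the one claimed and does not directly imply it with the constant $2\tfrac{8+k}{7}$. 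You can convert by writing $\eta^{\,j(k+1)}=\eta^{\,i+k}\cdot\eta^{-(i+k-j(k+1))}$ with exponent in $[k,2k]$ and using $\eta^{2k}\ge(\eta^{k+1})^2\ge\tfrac1{16}$, but this costs an extra factor of $16$. For the applications in this paper any $k$-dependent constant suffices, so the discrepancy is cosmetic; still, as written your sketch does not recover the precise constant in \eqref{A.1-1}, and the phrase ``$\eta^{i+k}$-type behavior'' hides this.
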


\section{Existence of time-periodic solution}\label{sec3}
In this section, we will construct the time-periodic solution to the reformulated problem \eqref{1.2.2}, \eqref{1.2.3}. We first list some notations and functional spaces for later use. For $x\in\pa\Omega=\{0,1\}$, we define the outward normal vector
\begin{equation}\nonumber
n(x)=\left\{
\begin{aligned}
(-1,0,0),\quad &x=0,\\
(1,0,0),\quad &x=1.
\end{aligned}
\right.
\end{equation}
 Denote the phase boundary $\gamma:=\{0,1\}\times \mathbb{R}^3=\gamma_+\cup\gamma_0\cup\gamma_-$, where
\begin{align}
\gamma_\pm=(\{0\}\times\{v_1\lessgtr0\})\cup(\{1\}\times\{v_1\gtrless0\}),\quad \gamma_0=\{0,1\}\times\{v_1=0\}.\nonumber
\end{align}
Define the Hilbert space $L^2(\gamma_\pm)$, equipped with the inner product
\begin{align}
\langle f,g\rangle_{\gamma_\pm}:=\mp\int_{v_1\lessgtr0}f(0,v)g(0,v) v_1\dd v\pm\int_{v_1\gtrless 0}f(1,v)g(1,v)v_1\dd v.\nonumber
\end{align}
We denote $|\cdot|_{L^2_\pm}$ as the norm induced by the inner product $\langle\cdot,\cdot\rangle_{\g_\pm}$.
For any $f\in L^2(\gamma_+)$, we define $P_{\gamma}f$ as
$$P_{\gamma}f(0,v)=\sqrt{2\pi\mu}\int_{v_1<0}f(0,v)\sqrt{\mu}|v_1|\dd v,\quad P_{\gamma}f(1,v)=\sqrt{2\pi\mu}\int_{v_1>0}f(1,v)\sqrt{\mu}v_1\dd v.$$
Note that $P_\gamma$ can be also viewed as an orthogonal projection operator on $L^2(\gamma_+)$.
We denote $\langle\cdot,\cdot\rangle$ as the standard $L^2((0,1)\times\mathbb{R}^3_v)$-inner product and $\|\cdot\|_{L^2}$ as its norm. $\|\cdot\|_{L^\infty}$ denotes the $L^\infty((0,1)\times\mathbb{R}^3_v)$-norm. Moreover, we denote 
$$
\|f\|_{L^{2}(0,T;L^2)}=\left(\int_0^T\|f(s)\|_{L^2}^2\dd s\right)^{1/2}\quad \text{and}\quad \|f\|_{L^{\infty}_{t,x,v}}:=\sup_{t\in\mathbb{R}}\|f(t)\|_{L^{\infty}}.
$$ 
For the phase boundary integration, we denote $d\sigma=|n(x)\cdot v|\mu(v)\dd v.$ We also denote 
$$
|f|_{L^{\infty}_{\pm}}:=\sup_{(x,v)\in\g_{\pm}}|f(x,v)|,
\quad\text{and}\quad|f|_{L^{\infty}_{t}L^{\infty}_{\pm}}:=\sup_{t\in\mathbb{R}}|f(t)|_{L^{\infty}_{\pm}}.
$$
Recall that the velocity weight $w$ is given in \eqref{def.vw}.

\begin{theorem}\label{thm1.2}
Let $\beta>3$ and $0\leq q<1$. There are $\delta_1>0$ and $C>0$ such that if $X_w\in C^2$ is time periodic with period $T$ and satisfies
\begin{align}
\|X_{w}-1\|_{C^2}=\delta\leq \delta_1,\nonumber
\end{align}
then the problem \eqref{1.2.2}, \eqref{1.2.3} admits a unique time periodic solution with the same period $T$ taking the form  
$$F^{\text{per}}(t,x,v)=\mu(v)+\sqrt{\mu}f^{\text{per}}(t,x,v),$$ which satisfies $\int_0^1\int_{\mathbb{R}^3}f^{\text{per}}(t,x,v)\sqrt{\mu(v)}\dd v\dd x\equiv0$ and
\begin{align}\label{3.0.5}
\|w f^{\text{per}}(t)\|_{L^\infty_{t,x,v}}+|w f^{\text{per}}|_{L^{\infty}_tL^\infty_{\pm}}\leq C\d.
\end{align}
\end{theorem}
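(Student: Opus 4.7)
The plan is to construct the time-periodic perturbation $f^{\text{per}}$ via a penalized Picard iteration, followed by a bootstrap reduction of the penalty parameter $\lambda$ from a large value $\lambda_0$ to $0$, as outlined in the introduction. Writing $F=\mu+\sqrt{\mu}f$ and using $Q(\mu,\mu)=0$ together with $\partial_{v_1}\mu=-v_1\mu$, equation \eqref{1.2.2} reduces to
$$
\partial_t f+v_1\partial_x f+\tfrac{1}{\sqrt{\mu}}\,G(t,x)\,\partial_{v_1}(\sqrt{\mu}\,f)+Lf=\Gamma(f,f)+G(t,x)\,v_1\,\sqrt{\mu},
$$
with boundary conditions derived from \eqref{1.2.3}. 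Both the source term $Gv_1\sqrt{\mu}$ and the boundary perturbation $\bar\mu_w-\mu$ are of size $O(\delta)$ under the hypothesis $\|X_w-1\|_{C^2}\le\delta$, and the whole argument hinges on propagating this smallness.

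For fixed $\lambda=\lambda_0$ sufficiently large I would next study the linear penalized scheme
$$
\partial_t f^{n+1,\lambda}+v_1\partial_x f^{n+1,\lambda}+\tfrac{G}{\sqrt{\mu}}\partial_{v_1}(\sqrt{\mu}\,f^{n+1,\lambda})+(\nu+\lambda)f^{n+1,\lambda}=Kf^{n,\lambda}+\Gamma(f^{n,\lambda},f^{n,\lambda})+Gv_1\sqrt{\mu}
$$
with the diffuse-type boundary condition. To produce a $T$-periodic iterate $f^{n+1,\lambda_0}$ at each step, I would solve the initial-boundary problem on $[-NT,0]$ from a small initial datum and pass $N\to\infty$; the $\lambda_0$-damping provides exponential-in-$N$ decay of the initial trace, so the limit is automatically $T$-periodic. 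Testing against $f^{n+1,\lambda_0}$ and integrating over $[0,T]\times(0,1)\times\R^3$, the term $\lambda_0\|f^{n+1,\lambda_0}\|_{L^2}^2$ dominates both $\langle Kf^{n,\lambda_0},f^{n+1,\lambda_0}\rangle$ and the $O(\delta)$ source contributions, giving a contraction $\|f^{n+1,\lambda_0}\|_{L^2(0,T;L^2)}\le\tfrac12\|f^{n,\lambda_0}\|_{L^2(0,T;L^2)}+C\delta$, which via the iteration lemma yields a limit $f^{\lambda_0}$. I would then upgrade to $L^\infty$ through the $L^2$--$L^\infty$ interplay of \cite{Guo2,EGKM,EGKM2}: along the characteristics \eqref{2.1.1}, the mild formulation absorbs the force term because $\dot V_1=G$ already encodes it in $\tfrac{d}{ds}$; iterating Duhamel twice, exploiting the kernel bound \eqref{k2} to gain smoothing from $K$, and applying the just-established $L^2$ bound on the main piece gives $\|wf^{\lambda_0}\|_{L^\infty_{t,x,v}}+|wf^{\lambda_0}|_{L^\infty_t L^\infty_\pm}\le C\delta$, while the factor $e^{-\int(\nu+\lambda_0)\,ds}$ along characteristic cycles supplies the smallness of the boundary-reflection tail in the stochastic-cycle estimate.

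The main and most delicate step is the bootstrap reduction of $\lambda$ from $\lambda_0$ to $0$ in finitely many decreasing steps, following \cite{DHWZ,DWZ}. At each intermediate $\lambda$, one runs the same penalized scheme with source depending on the previously constructed solution and uses the iteration lemma to propagate the $\delta$-smallness in both $L^2$ and $L^\infty$. The genuine obstacle appears at $\lambda=0$: the coercivity \eqref{c} only controls the microscopic part $(I-P)f$, so the five-dimensional macroscopic projection $Pf$ must be recovered by a separate macroscopic argument. I would extract it by testing \eqref{1.2.2} against the hydrodynamic bases $\chi_i$, $i=0,\ldots,4$, then exploiting (i) the total-mass condition \eqref{mass1} to pin down the $\chi_0$-mode, (ii) time-periodicity, which eliminates the $\partial_t$ contribution after integration over $[0,T]$, and (iii) the diffuse-reflection boundary, which controls the momentum and energy fluxes up to $O(\delta)$ corrections coming from $\bar\mu_w-\mu$ and the external force. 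Unlike the bounded-domain transient setting of \cite{DWZ}, no large-time decay is available here, so the time-periodicity and the conservation/boundary structure must carry the entire argument; this is the main conceptual hurdle. Uniqueness then follows from the same energy scheme applied to the difference of two periodic solutions, using the smallness of $\delta$ and the already-established $L^\infty$ bound.
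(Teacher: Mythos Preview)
Your overall strategy---penalized iteration for large $\lambda_0$, $L^2$--$L^\infty$ interplay, then bootstrap $\lambda\to 0$---matches the paper's, but the paper organizes it differently and that difference matters.

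The paper does \emph{not} put the nonlinearity into the penalized scheme. It first proves a purely linear solvability result (Proposition~\ref{prop3.1}): for arbitrary time-periodic source $g$ and boundary datum $r$ satisfying the zero-mass condition \eqref{3.0.2}, there is a periodic solution $f$ with $\|wf\|_{L^\infty_{t,x,v}}\le C(\|\nu^{-1}wg\|_{L^\infty_{t,x,v}}+|wr|_{L^\infty_tL^\infty_-})$, and the entire bootstrap $\lambda_0\to 0$ is carried out at this linear level. Only afterwards is the nonlinear problem solved by a straightforward Picard iteration $f^{j}\mapsto f^{j+1}$ with source $g=\Gamma(f^j,f^j)+Gv_1\sqrt{\mu}$ and boundary datum $r$ depending on $f^j$, closing immediately via $\|wf^{j+1}\|_{L^\infty}\le C\delta+C\|wf^j\|_{L^\infty}^2+C\delta|wf^j|_{L^\infty_+}$. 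Your plan of carrying $\Gamma(f^{n,\lambda},f^{n,\lambda})$ through the penalized iteration introduces a circularity---the $L^2$ contraction needs $L^\infty$ smallness of $f^{n,\lambda}$, which in turn needs the $L^2$ bound---that the paper's separation cleanly avoids.

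Two further points where your sketch diverges from what the paper actually does. First, the paper constructs each periodic iterate not by solving a Cauchy problem on $[-NT,0]$ and passing $N\to\infty$, but directly via the characteristic formula, using Lemma~\ref{lm2.1.2} to verify periodicity; and to guarantee that each iterate has \emph{finite} $L^\infty$ norm before one is allowed to absorb $\tfrac18\|h^{i-l}\|_{L^\infty}$ on the right of \eqref{3.1.4}, the boundary projection is first weakened to $(1-\tfrac1n)P_\gamma$ (Lemma~\ref{lem3.2.1}) and only afterwards is $n\to\infty$ taken. You do not mention this step, and the paper stresses in the introduction that it is essential. Second, your claim that the macroscopic estimate is needed only ``at $\lambda=0$'' understates the issue: the bootstrap contraction $T_\lambda$ relies on the linear bound \eqref{3.2.25} being \emph{uniform in $\lambda\in[0,\lambda_0]$}, and that uniformity already requires the macroscopic control (via the functional $\mathfrak{e}_{f^\lambda}$ of \cite{CKL}, equation \eqref{3.2.27}) at every intermediate $\lambda$, not just at the endpoint.
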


The proof of Theorem \ref{thm1.2} heavily relies on the solvability of the following linear problem:
\begin{equation}\label{3.0.1}\left\{
\begin{aligned}
&\pa_tf+v_1\pa_xf+G(t,x)\pa_{v_1}\left(\sqrt{\mu}f\right)\f{1}{\sqrt{\mu}}+Lf=g, \ (t,x,v)\in \R\times (0,1)\times \R^3,\\
&f|_{\gamma_-}=P_\gamma f+r.
\end{aligned}
\right.
\end{equation}
Here the force $G(t,x)$ and inhomogeneous sources $g$ and $r$ are all time-periodic functions with period $T>0.$
\begin{proposition}\label{prop3.1}
Let $\beta>3$ and $0\leq q<1$. Assume that $G$, $g$ and $r$ are time-periodic functions with period $T>0$, and satisfy the following zero-mass condition
\begin{align}\label{3.0.2}
\int_0^1\int_{\mathbb{R}^3}g(t,x,v)\sqrt{\mu(v)}\dd v\dd x=\langle r,\sqrt{\mu}\rangle_{L^{2}_-}=0,
\end{align}
for all $t\in \mathbb{R}$, and $L^{\infty}$ bounds
$$\|\nu^{-1}wg\|_{L^\infty_{t,x,v}}+|wr|_{L^{\infty}_tL^\infty_-}<\infty.$$  Then if $\|X_w-1\|_{C^2}$ is sufficiently small, there exists a unique time-periodic solution $f=f(t,x,v)$ with the same period, to the linearized Boltzmann equation \eqref{3.0.1}, such that 
$$
\int_0^1\int_{\mathbb{R}^3} f(t,x,v) \sqrt{\mu} dvdx=0 
$$ 
for all $t\in \mathbb{R}$, and
\begin{equation}\label{3.0.3}
\|wf\|_{L^\infty_{t,x,v}} +|wf|_{L^\infty_tL^\infty_\pm} \leq C |wr|_{L^{\infty}_{t}L^\infty_-}+C\|\nu^{-1}wg\|_{L^\infty_{t,x,v}}.
\end{equation}
\end{proposition}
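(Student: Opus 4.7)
The plan is to follow the penalty-and-bootstrap strategy sketched after Theorem 1.1, combined with an $L^2$--$L^\infty$ interplay along the characteristic flow from Section 2.2. First, I fix a (large) constant $\la_0>0$ and, for each $\la\geq 0$, consider the iteration
\begin{align*}
\pa_tf^{n+1,\la}+v_1\pa_xf^{n+1,\la}+\frac{G(t,x)}{\sqrt\mu}\pa_{v_1}\bigl(\sqrt\mu f^{n+1,\la}\bigr)+(\nu+\la)f^{n+1,\la}=Kf^{n,\la}+g,
\end{align*}
coupled with the linear boundary relation $f^{n+1,\la}|_{\g_-}=P_\g f^{n+1,\la}+r$. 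For fixed $\la$ and given $f^{n,\la}$, each $f^{n+1,\la}$ is obtained by integration along the characteristics from \eqref{2.1.1} using Duhamel's formula with damping factor $\exp\bigl(-\int(\nu+\la)\,d\tau\bigr)$; time-periodicity of the iterate is inherited from the periodicity of $G$, $g$, $r$ and from the periodicity of $t_\mathbf{b}, x_\mathbf{b}, v_\mathbf{b}$ supplied by Lemma \ref{lm2.1.2}. Before running the main estimates, an independent argument based on the explicit Duhamel representation shows that each $f^{n+1,\la}$ lies a priori in $L^\infty$, which is needed to avoid the $A\leq\tfrac12A+B$ trap flagged in the introduction.

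Second, I run the two-tier estimate. Testing the iteration equation against $f^{n+1,\la}$ and integrating over one period $[0,T]\times(0,1)\times\R^3$, the time derivative drops by periodicity. Integration by parts in $v_1$ turns the force term into $-\tfrac12\langle Gv_1 f^{n+1,\la},f^{n+1,\la}\rangle$, which is absorbed by $\langle\nu f^{n+1,\la},f^{n+1,\la}\rangle$ once $\|G\|_\infty\lesssim\d$ is small enough. The boundary integrals at $x=0,1$ reduce, via the identity $f=P_\g f+r$ on $\g_-$, to $\g_+$-traces controlled by $|wr|_{L^\infty_tL^\infty_-}$. The coercivity \eqref{c} together with the $\la\|f^{n+1,\la}\|^2$ penalty then gives a uniform $L^2(0,T;L^2)$ bound, the hydrodynamic projection $Pf^{n+1,\la}$ being controlled by the penalty together with the zero-mass condition \eqref{3.0.2}. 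Upgrading to $L^\infty$ is done by iterating the Duhamel representation twice through $K$, using the kernel estimate \eqref{k2} to split large and small velocities; the $L^2$ bound feeds the small-velocity piece. Choosing $\la_0$ sufficiently large, one obtains the contraction-type inequality
\begin{align*}
\|wf^{n+1,\la_0}\|_{L^\infty_{t,x,v}}\leq \tfrac18\|wf^{n,\la_0}\|_{L^\infty_{t,x,v}}+C\|\nu^{-1}wg\|_{L^\infty_{t,x,v}}+C|wr|_{L^\infty_tL^\infty_-},
\end{align*}
to which the iteration lemma of Section 2.4 applies.

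Third, passing $n\to\infty$ yields a time-periodic mild solution $f^{\la_0}$ satisfying \eqref{3.0.3} at penalty level $\la_0$. To lower the penalty down to $\la=0$, I perform the bootstrap by choosing a short chain $\la_0>\la_1>\cdots>\la_N=0$ with small consecutive gaps: given a solution $f^{\la_k}$, I rewrite the problem at $\la_{k+1}$ as the problem at $\la_k$ with augmented source $g+(\la_k-\la_{k+1})f^{\la_{k+1}}$, and solve by a Banach fixed-point argument in the weighted $L^\infty$-space using the solution map already established at level $\la_k$. The gap $\la_k-\la_{k+1}$ is tuned so that the resulting contraction constant is $\leq\tfrac12$, which preserves the linear bound \eqref{3.0.3} through every step up to a harmless multiplicative constant. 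The zero-mass condition is propagated through each step because the $\sqrt\mu$-moment of every term in the equation vanishes under the hypotheses \eqref{3.0.2}. Uniqueness follows from the same contraction argument applied to the difference of two solutions.

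The principal obstacle is that the force $G\pa_{v_1}\bigl(\sqrt\mu f\bigr)/\sqrt\mu$ produces the linear-in-$v_1$ multiplicative term $-\tfrac12 Gv_1 f$ which, under the exponential weight $w(v)\sim e^{q|v|^2/4}$ with $q<1$, interacts delicately with the collision damping $\nu(v)\sim1+|v|$ through the characteristic integrations. Controlling it requires the quantitative smallness $\|X_w-1\|_{C^2}\leq\d_1$ to keep $\|G\|_{L^\infty}$ below a threshold that is \emph{independent} of $\la$; otherwise the constants in the weighted $L^\infty$ bound blow up as $\la\to0$ and the bootstrap cannot reach $\la=0$. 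A secondary difficulty is that the time axis is unbounded, so no decay-from-initial-data argument is available; time-periodicity must be imposed at the level of the characteristic representation itself, which is precisely why the $L^2$ analysis is carried out on $L^2(0,T;L^2)$ rather than through a dissipation inequality on $[0,\infty)$.
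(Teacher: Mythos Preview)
Your overall architecture---penalty, $K$-iteration at large $\la$, then bootstrap $\la\downarrow0$---matches the paper's proof of Proposition~\ref{prop3.1}. There is, however, a genuine gap at the very first step.

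You write that, for fixed $\la$ and given $f^{n,\la}$, the next iterate $f^{n+1,\la}$ is ``obtained by integration along the characteristics using Duhamel's formula,'' and that an ``explicit Duhamel representation'' furnishes an a~priori $L^\infty$ bound. But your boundary condition is
\[
f^{n+1,\la}\big|_{\g_-}=P_\g f^{n+1,\la}+r,
\]
which is \emph{implicit}: the incoming data depend on the outgoing trace of the same unknown $f^{n+1,\la}$ through $P_\g$. There is no closed Duhamel formula for this problem, and hence no direct route to either existence or a finite $L^\infty$ bound for $f^{n+1,\la}$; this is exactly the scenario the paper warns about (the ``$A\le\tfrac12A+B$'' trap). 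The paper resolves this by inserting an \emph{additional} approximation layer before the $K$-iteration: it first solves, for each fixed $\la>0$, the problem with the relaxed boundary condition $(1-\tfrac1n)P_\g f+r$ (Lemma~\ref{lem3.2.1}), where the factor $(1-\tfrac1n)<1$ makes an inner iteration on the boundary trace contractive and yields a genuine explicit solution with finite weighted $L^\infty$ norm; only after passing $n\to\infty$ (Lemma~\ref{lem3.2.2}) does one have a solution operator for the equation with right side $g$ and full diffuse boundary, which is then used as the building block for the $K$-iteration (Lemma~\ref{lem3.2.3}). Your proposal skips this layer entirely.

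Two smaller points. First, the role of the large penalty $\la_0$ in the paper is to make the $L^2$ iteration (on $Kf^n$) contractive, cf.~\eqref{3.2.23}; the $\tfrac18$ in the $L^\infty$ estimate comes instead from the stochastic-cycle machinery of Lemma~\ref{lm3.1} and Lemma~\ref{lm3.2} and is independent of $\la$. In particular the actual $L^\infty$ bound involves $\max_{0\le l\le k}\|wf^{n-l,\la}\|_{L^\infty}$ over many previous iterates (this is why the iteration lemma of Section~2.4 is needed), not just the immediate predecessor. Second, the zero-mass condition \eqref{3.0.2} is not used at the $\la\ge\la_0$ stage; it enters only in the $\la$-uniform a~priori estimate (Step~1 of the bootstrap), where control of $Pf^\la$ in the absence of the penalty requires a separate macroscopic argument, as in \eqref{3.2.27}.
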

\subsection{$L^{\infty}$-estimate}
Denote $h(t,x,v):= wf(t,x,v)$. Then the equation for $h$ reads as
\begin{align}
\begin{cases}
\pa_th+v_1 \pa_x h+G(t,x)\cdot\pa_{v_1}h+\tilde{\nu}(t,x,v)h=K_{w} h+wg,\\[2mm]
h|_{\g_-}=\f{1}{\tilde{w}(v)} \int_{n(x)\cdot v'>0} h(t,x,v') \tilde{w}(v') \dd\sigma+wr,\nonumber
\end{cases}
\end{align}
where we have denoted 
$$
\tilde{\nu}=\nu(v)-\f{G(t,x)v_1}{2}-\f{G(t,x)\pa_{v_1}w}{w}, \quad \tilde{w}(v)\equiv \f{1}{w(v)\sqrt{\mu(v)}},\quad
K_wh=wK(\f{h}{w}),
$$
and 
$
\dd \si=\dd \si(v')=\sqrt{2\pi}\mu(v')|n(x)\cdot v'|\,\dd v'.
$
Note that $\dd \si$ is a probability measure on $\{n(x)\cdot v'>0\}$.  In what follows we are devoted to establishing the uniform $L^{\infty}$-estimate on the solution to the following time-periodic problem:
\begin{equation}\label{3.1.2}
\begin{cases}
\pa_th^{i+1}+v_1\pa_x h^{i+1}+G(t,x)\cdot\pa_{v_1}h^{i+1}+(\lambda+\tilde{\nu}(t,x,v)) h^{i+1}=K_w h^i +wg,\\[2mm]
h^{i+1}|_{\g_-}=\f{1}{\tilde{w}(v)} \int_{n(x)\cdot v'>0} h^i(t,x,v') \tilde{w}(v')\, \dd\sigma+wr,
\end{cases}
\end{equation}
for $i=1,2,3,\cdots$ and $h^0=h^0(t,x,v)$ is given. Here $\lambda$ is a positive constant, and $g(t,x,v)$ and $r(t,x,v)$ are both time-periodic functions with period $T>0$. Before doing that, we need some preparations. Let $t\in \mathbb{R},$ $(x,v)\in \big[(0,1)\times \mathbb{R}^3\big]\cup\gamma_+$ and
$
(t_{0},x_{0},v_{0})=(t,x,v)$. For $v_{k+1}\in {\mathcal{V}}_{k+1}:=\{v_{k+1}\cdot n(x_k)<0 \}$, the back-time cycle is defined as
\begin{equation}
\left\{\begin{aligned}
X_{cl}(s;t,x,v)&=\sum_{k}\Fi_{[t_{k+1},t_{k})}(s)X(s;t_k,x_k,v_k),\\[1.5mm]
V_{cl}(s;t,x,v)&=\sum_{k}\Fi_{[t_{k+1},t_{k})}(s)V(s;t_k,x_k,v_k),\nonumber
\end{aligned}\right.
\end{equation}
with
\begin{equation}
({t}_{k+1},{x}_{k+1},v_{k+1})
=({t}_{k}-{t}_{\mathbf{b}}(t_k,{x}_{k},v_{k}), {x}_{\mathbf{b}}(t_k,{x}_{k},v_{k}),v_{k+1}).\nonumber
\end{equation}

The following lemma gives the mild formulation of $h^{i+1}$ in \eqref{3.1.2}, and its proof is omitted for brevity, cf.~\cite{Guo2}.

\begin{lemma}\label{lm3.1}
Let $\lambda>0$ and integer $k\geq 1 $. For any $t\in [0,T]$, almost every $(x,v)\in\big[(0,1)\times \mathbb{R}^3\big]\cup\gamma_+$, any $s\leq t$ and for any $i\geq k-1$, we have
\begin{align}\label{3.1.3}
h^{i+1}=\sum_{i=1,2,3}J_i+\Fi_{\{t_1>s\}}\sum_{i=4}^{11}J_i,
\end{align}
where we have denoted
$$
\begin{aligned}
	&J_1=\Fi_{\{t_1\leq  s\}}e^{-\int_s^t(\tilde{\nu}(\tau')+\lambda)\dd \tau'} h^{i+1}(s,X_{cl}(s),V_{cl}(s)),\\
	&J_2+J_3=\int_{\max\{{t}_1,s\}}^t e^{-\int_{\tau}^t(\tilde{\nu}(\tau')+\lambda)\dd \tau'}\Big[K_wh^{i} +wg\Big](\tau,X_{cl}(\tau),V_{cl}(\tau))\dd \tau,\\
	&J_4=e^{-\int_{t_1}^t({\tilde{\nu}}(\tau')+\lambda)\dd\tau} w r(t_1,x_1,V_{cl}(t_1)),\\
	& J_5=\frac{e^{-\int_{t_1}^t(\tilde{\nu}(\tau')+\lambda)\dd\tau'}}{\tilde{w}(V_{cl}(t_1))} \int_{\Pi _{j=1}^{k-1}\mathcal{V}_{j}}
	\sum_{l=1}^{k-2} \Fi_{\{t_{l+1}>s\}}
 wr(t_{l+1},x_{l+1},V_{cl}(t_{l+1}))\dd \Sigma_{l}({t}_{l+1}),\\
 &J_6=\frac{e^{-\int_{t_1}^t(\tilde{\nu}(\tau')+\lambda)\dd \tau'}}{\tilde{w}(V_{cl}(t_1))} \int_{\Pi _{j=1}^{k-1}{\mathcal{V}}_{j}} \sum_{l=1}^{k-1} \Fi_{\{{t}_{l+1}\leq s<{t}_l\}} h^{i+1-l}(s,X_{cl}(s),V_{cl}(s)) \dd\Sigma_{l}(s),
    \end{aligned}
$$
$$
\begin{aligned}
	&J_7+J_8=\frac{e^{-\int_{t_1}^t(\tilde{\nu}(\tau')+\lambda)\dd\tau'}}{\tilde{w}(V_{cl}(t_1))} \int_{\Pi_{j=1}^{k-1}{\mathcal{V}}_{j}} \sum_{l=1}^{k-1}\int_{s}^{{t}_l} \Fi_{\{{t}_{l+1}\leq s<t_{l}\}}[K_wh^{i-l}+wg](\tau,X_{cl}(\tau),V_{cl}(\tau)) \dd\Sigma_l(\tau)\dd\tau,\\
	&J_9+J_{10}=\frac{e^{-\int_{t_1}^t(\tilde{\nu}(\tau')+\lambda)\dd\tau'}}{\tilde{w}(V_{cl}(t_1))} \int_{\Pi_{j=1}^{k-1}{\mathcal{V}}_{j}} \sum_{l=1}^{k-1}\int_{{t}_{l+1}}^{{t}_l} \Fi_{\{{t}_{l+1}>s\}}[K_wh^{i-l}+wg](\tau,X_{cl}(\tau),V_{cl}(\tau)) \dd\Sigma_l(\tau)\dd\tau,\\
	&J_{11}=\frac{e^{-\int_{t_1}^t(\tilde{\nu}(\tau')+\lambda)\dd\tau'}}{\tilde{w}(V_{cl}(t_1))} \int_{\Pi _{j=1}^{k-1}{\mathcal{V}}_{j}}  \Fi_{\{{t}_{k}>s\}} h^{i+2-k}(t_k,{x}_k,V_{cl}(t_k)) \dd\Sigma_{k-1}({t}_k),
    \end{aligned}
$$
and
\begin{align}
\dd\Sigma_l(\tau) = \big\{\Pi_{j=l+1}^{k-1}\dd{\sigma}_j\big\}\cdot \big\{\tilde{w}(v_l) e^{-\int_{\tau}^{t_l}(\tilde{\nu}(\tau')+\lambda)\dd\tau'} \dd{\sigma}_l\big\}\cdot \big\{\Pi_{j=1}^{l-1} e^{-\int_{t_{j+1}}^{t_{j}}(\tilde{\nu}(\tau')+\lambda)\dd\tau'} \f{\tilde{w}(v_j)}{\tilde{w}(V_{cl}(t_{j+1}))}\dd{\sigma}_j\big\}.\nonumber
\end{align}
\end{lemma}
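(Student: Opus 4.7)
The plan is to iterate Duhamel's formula along the backward characteristic cycle $k-1$ times, applying the diffuse reflection boundary condition at each bounce, and carefully tracking when each cycle leg crosses the cutoff time $s$.

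For the first leg, $h^{i+1}(\tau, X_{cl}(\tau), V_{cl}(\tau))$ satisfies an ODE in $\tau$ with damping $\tilde{\nu}+\lambda$ and source $K_w h^i + wg$. Integrating from $\max\{t_1, s\}$ up to $t$ yields, on the event $\{t_1 \leq s\}$, the initial-value contribution $J_1$, and on $\{t_1 > s\}$, the boundary-value contribution $e^{-\int_{t_1}^t (\tilde\nu+\lambda)\dd\tau'} h^{i+1}(t_1, x_1, V_{cl}(t_1))$; the source integrals $J_2 + J_3$ appear in either case. On $\{t_1 > s\}$ I would then substitute the diffuse reflection condition from \eqref{3.1.2}: the inhomogeneous part $wr$ produces $J_4$, while the integral term $\tilde{w}(V_{cl}(t_1))^{-1}\int_{\mathcal{V}_1} h^i \tilde{w}\,\dd\sigma_1$ still carries the unknown and must be iterated.

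Repeating this Duhamel-then-boundary procedure for $l = 2, \dots, k-1$ builds the multi-fold velocity integral $\int_{\Pi_{j=1}^{k-1}\mathcal{V}_j}$ together with the measure $\dd\Sigma_l(\tau)$, which records the boundary weights $\tilde{w}(v_j)$, the ratios $\tilde{w}(v_j)/\tilde{w}(V_{cl}(t_{j+1}))$ introduced at each diffuse reflection, and the successive exponential dampings $e^{-\int_{t_{j+1}}^{t_j}(\tilde\nu+\lambda)\dd\tau'}$ between bounces. At the $l$-th iteration the indicator $\Fi_{\{t_{l+1} \leq s < t_l\}}$ selects the case in which $s$ is reached mid-leg, producing the initial-value piece $J_6$ and the source pieces $J_7 + J_8$, while the complementary indicator $\Fi_{\{t_{l+1} > s\}}$ yields the boundary source $J_5$ and the source integrals $J_9 + J_{10}$. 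The residual $J_{11}$ captures the event that all $k-1$ bounces still occur after $s$, in which case the iteration terminates at $h^{i+2-k}(t_k, x_k, V_{cl}(t_k))$ weighted by $\dd\Sigma_{k-1}(t_k)$.

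The main obstacle is purely combinatorial bookkeeping rather than any analytical estimate: each bounce introduces two complementary indicators, and the $\dd\Sigma_l$ factor must correctly interleave the ``current'' contribution $\tilde{w}(v_l) e^{-\int_\tau^{t_l}(\tilde\nu+\lambda)\dd\tau'}\dd\sigma_l$ at the integration endpoint with the historical pieces from bounces $1$ through $l-1$. The constraint $i \geq k-1$ guarantees that every iterated superscript $i+1-l$, $i-l$, and $i+2-k$ stays nonnegative, so the recursion closes at depth $k$ without referencing an undefined iterate. Once this accounting is in place, summing the eleven pieces reproduces the representation \eqref{3.1.3}.
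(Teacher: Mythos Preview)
Your proposal is correct and follows exactly the standard approach the paper has in mind: the paper itself omits the proof, stating only that ``its proof is omitted for brevity, cf.~\cite{Guo2}'', and your iterated Duhamel-along-back-time-cycles argument is precisely Guo's construction adapted to the present setting with the force term $G$ and the iteration index $i$. The bookkeeping you describe---splitting on $\{t_{l+1}\le s<t_l\}$ versus $\{t_{l+1}>s\}$ at each bounce, accumulating the weights $\tilde w(v_j)/\tilde w(V_{cl}(t_{j+1}))$ and damping factors into $\dd\Sigma_l$, and terminating either at time $s$ or after $k-1$ reflections---is exactly what produces the eleven pieces $J_1,\dots,J_{11}$.
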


\begin{lemma}\label{lm3.2}
For $T_0\gg1$ be sufficiently large, there exists a positive constant $\delta_2=\delta_2(T_0)$, such that if $\|G\|_{L^{\infty}_{t,x}}\leq \delta_2$, then there exist constants $C_1$, $C_2>0$, independent of $T_0$, such that for $k=C_1T_0^{5/4}$, any $s\in\mathbb{R}$ and any $(t,x,v)\in\{[s,s+T_0]\times(0,1)\times\mathbb{R}^3\}\cup\{[s,s+T_0]\cup\g_+\}$, it holds that
\begin{align}\label{3.1.3-1}
\int_{\Pi _{j=1}^{k-1}{\mathcal{V}}_{j}} \Fi_{\{{t}_k>s\}}~  \Pi _{j=1}^{k-1} \dd{\Sigma} _{k-1}(t_k)\leq \left(\frac12\right)^{\hat{C}_2T_0^{\frac54}}.
\end{align}
\end{lemma}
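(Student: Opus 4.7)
The plan is to show that on the event $\{t_k>s\}$, the total backward travel time $\sum_{j=0}^{k-1}t_{\mathbf b}(t_j,x_j,v_j)$ must be less than $T_0$, which forces most of the reflected velocities $v_j$ to have large horizontal component; the Gaussian tail of the diffuse-reflection measure $\dd\sigma_j=\sqrt{2\pi}\,\mu(v_j)|n(x_{j-1})\cdot v_j|\dd v_j$ then produces the claimed super-exponential bound. Parameters will be balanced by choosing a velocity threshold $N=T_0^{1/4}$ and $k=C_1 T_0^{5/4}$ with $C_1$ large enough, yielding a bound of order $\exp(-cT_0^{7/4})$, comfortably stronger than the required $(1/2)^{\hat C_2 T_0^{5/4}}$ for $T_0$ large.

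First I would establish a characteristic-stability lemma: for $\delta_2=\delta_2(T_0)$ sufficiently small, every boundary point $x_{j-1}\in\{0,1\}$ and every $v_j$ with $|v_{j,1}|\leq N$ whose backward characteristic reaches the opposite wall satisfies $t_{\mathbf b}(t_{j-1},x_{j-1},v_j)\geq 1/(3N)$. Indeed, integrating $\dot V_1=G$ along the characteristic gives $|V_1(s)-v_{j,1}|\leq\delta_2 t_{\mathbf b}$; if $t_{\mathbf b}\leq 1/(3N)$ then $|V_1|\leq 2N$ throughout, so the horizontal displacement is at most $2N\cdot t_{\mathbf b}\leq 2/3$, contradicting the unit width of the slab. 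It follows that on $\{t_k>s\}$ the set $\mathcal S:=\{1\leq j\leq k-1:|v_{j,1}|\leq N\}$ has cardinality at most $3NT_0=3T_0^{5/4}$, so choosing $C_1>6$ forces at least $k/2$ indices to satisfy $|v_{j,1}|>N$. A direct computation gives
\[
\int_{\mathcal V_j\cap\{|v_{j,1}|>N\}}\sqrt{2\pi}\,\mu(v)|v_1|\dd v=e^{-N^2/2}.
\]
Combining with a binomial counting factor, and absorbing the weight ratios $\tilde w(v_j)/\tilde w(V_{cl}(t_{j+1}))$ (each bounded by a constant for $\delta_2$ small) together with the decay factors $e^{-\int(\tilde\nu+\lambda)\dd\tau'}\leq 1$ appearing in $\dd\Sigma_{k-1}$, the left-hand side of \eqref{3.1.3-1} is bounded by $C^k\cdot 2^{k-1}\cdot e^{-kN^2/4}$. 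Substituting $N=T_0^{1/4}$ and $k=C_1 T_0^{5/4}$ gives $\exp\bigl((C_1\log(2C))T_0^{5/4}-\tfrac{C_1}{4}T_0^{7/4}\bigr)$, where the $T_0^{7/4}$ term dominates for $T_0$ large.

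The main obstacle is the uniform control of the perturbed characteristic flow, in particular ruling out force-induced trajectory reversals or shortcuts. The Gronwall-type bound above is adequate only when $\delta_2$ shrinks with $T_0$; quantitatively, $\delta_2(T_0)\lesssim T_0^{-5/4}$ suffices, since it guarantees both that the velocity perturbation on any single leg of length $\leq T_0$ is $O(1)$ (so the weight ratios $\tilde w(v_j)/\tilde w(V_{cl}(t_{j+1}))$ remain bounded) and that the displacement estimate in the contradiction argument is sharp. The large-$|v_j|$ regime, where the perturbation estimate could in principle degrade, is harmless because it is already absorbed by the Gaussian factor $e^{-N^2/2}$ coming from $\dd\sigma_j$.
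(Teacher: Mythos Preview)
Your overall counting strategy is sound, but there is a genuine gap in the ``characteristic-stability lemma.'' You only prove the lower bound $t_{\mathbf b}\geq 1/(3N)$ for trajectories that reach the \emph{opposite} wall, and you then assert that $|\mathcal S|\leq 3NT_0$ where $\mathcal S=\{j:|v_{j,1}|\leq N\}$. This inference fails because of force-induced \emph{same-wall} returns: if $|v_{j,1}|$ is very small (say $|v_{j,1}|\ll \delta_2 T_0$), the force $G$ can reverse the horizontal velocity and send the particle back to the wall it just left in time $t_{\mathbf b}\sim |v_{j,1}|/\delta_2$, which can be arbitrarily short. Such grazing reflections lie in $\mathcal S$ but contribute essentially nothing to the time budget $\sum_j(t_j-t_{j+1})\leq T_0$, so there is no a priori bound on how many of them can occur. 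Your remark that ``$\delta_2(T_0)\lesssim T_0^{-5/4}$ suffices'' to rule out reversals is incorrect: shrinking $\delta_2$ only shrinks the velocity window in which reversals happen, it never eliminates it, and your argument nowhere exploits the small $\dd\sigma_j$-measure of that window.

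The paper handles exactly this obstruction by using a \emph{two-sided} non-grazing set $\mathcal V_j^\epsilon=\{\epsilon\leq |v_{j,1}|\leq \epsilon^{-1}\}$. The lower cutoff $|v_{j,1}|\geq\epsilon$ is what allows the same-wall case to be treated: if $V_1$ must vanish somewhere along the leg, then $\epsilon\leq|v_{j,1}|\leq\|G\|_{L^\infty}(t_j-t_{j+1})$ forces $t_j-t_{j+1}\geq\epsilon/\|G\|_{L^\infty}$. The complement $\mathcal V_j\setminus\mathcal V_j^\epsilon$ then has small $\dd\sigma_j$-measure (order $\epsilon$), and a binomial sum over the number of non-grazing indices gives the bound after optimizing $\epsilon$ against $k$. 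Your argument can be repaired along these lines by introducing a lower threshold $\eta$ and treating $\{|v_{j,1}|<\eta\}$ via its small measure rather than via the time-budget constraint; once you do this you essentially recover the paper's proof, though your choice $N=T_0^{1/4}$ for the upper threshold (exploiting the Gaussian tail $e^{-N^2/2}$ rather than the polynomial tail used in the paper) would yield a somewhat sharper final exponent.
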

\begin{proof}
Take $\eps>0$ sufficiently small. We introduce the following non-grazing set
$$
\CV_{j}^\eps:=\{v_j=(v_{j,1},v_{j,2},v_{j,3})\in \CV_j:\eps \leq |v_{j,1}|\leq \eps^{-1}\}.
$$
We claim that if $\|G\|_{L^{\infty}_{t,x}}\leq 1$, then it holds that $t_{j}-t_{j+1}\geq \f{\eps}{2}$, for $v_{j}\in V_{j}^{\eps}$. In fact, we can solve $[X(\tau),V(\tau)]$ from the characteristic ODE \eqref{2.1.1} as:
\begin{align}\label{3.1.3-2}
X(\tau;t_j,x_j,v_j)&=x_j-\int_{\tau}^{t_j}V_1(\tau';t_j,x_j,v_j)\dd\tau',\nonumber\\
V_1(\tau;t_j,x_j,v_j)&=v_{j,1}-\int_{\tau}^{t_j}G(\tau',X(\tau';t_j,x_j,v_j))\dd\tau'.
\end{align}
Here $t_{j+1}\leq \tau\leq t_j$.

\medskip
\noindent{\it Case 1:} $x_j=0$, $x_{j+1}=1$ or $x_j=1$, $x_{j+1}=0$. Then from \eqref{3.1.3-2} we obtain that
\begin{align}
|(t_{j}-t_{j+1})v_{j,1}|&=\left|x_j-x_{j+1}+\int_{t_{j+1}}^{t_j}G[\tau,X(\tau)](\tau-t_{j+1})\dd\tau\right|\nonumber\\
&\geq |x_{j}-x_{j+1}|-\f{\|G\|_{L^{\infty}_{t,x}}(t_j-t_{j+1})^2}{2}\nonumber.
\end{align}
Then if $t_j-t_{j+1}\leq \f{\eps}{2}$, it holds that
\begin{align}
|(t_{j}-t_{j+1})|\geq |v_{j,1}|^{-1}-\f{\eps^2\|G\|_{L^{\infty}_{t,x}}}{8|v_{j,1}|}\geq \f{7\eps}{8},\nonumber
\end{align}
which leads to a contradiction.

\medskip
\noindent{\it Case 2:} $x_j=x_{j+1}=0$ or $x_j=x_{j+1}=1.$ In this case, there exists $\b{t}_j\in(t_{j+1},t_j)$, such that $V(\b{t}_j;t_j,x_j,v_j)=0$. Therefore, by taking $\tau=\b{t}_j$ in the second equation of \eqref{3.1.3-2}, we have
\begin{align}
\eps{\leq |v_{j,1}|=}  |V_1(\b{t}_j)-v_{j,1}|\leq \int_{t_{j+1}}^{t_j}|G(\tau',X(\tau'))|\dd\tau\leq \|G\|_{L^{\infty}_{t,x}}|t_{j}-t_{j+1}|,\nonumber
\end{align}
which implies that $t_j-t_{j+1}\geq \|G\|_{L^{\infty}}^{-1}\cdot\eps\geq \eps,$ which also leads to a contradiction. This completes the proof of claim.\\

Therefore, if $t_k=t_k(t,x,v,v_1, \cdots, v_{k-1})>0$, there can be at most $\left[ \frac{2T_0}{\epsilon}\right]+1$ number of $v_j\in {\mathcal{V}}_j^\epsilon$ for $1\leq j\leq k-1$. Hence we have
\begin{align}
&\int_{\Pi _{j=1}^{k-1}{\mathcal{V}}_{j}} \Fi_{\{{t}_k>0\}}\dd\Sigma_{k}(t_{k-1})\nonumber\\
&\leq \sum_{n=1}^{\left[ \frac{2T_0}{\epsilon}\right]+1} \int_{\{\mbox{There are n number} ~v_j\in {\mathcal{V}}_j^{\epsilon} ~\mbox{for some} ~1\leq j\leq k-1\}} \dd \Sigma_k(t_{k-1})\nonumber\\
&\leq C\sum_{n=1}^{\left[ \frac{2T_0}{\epsilon}\right]+1} \left(
\begin{gathered}
k-1\nonumber\\
n
\end{gathered}\right) \left|\sup_{j}\int_{{\mathcal{V}}_j^\epsilon}\f{\tilde{w}(v_j)}{\tilde{w}(V_{cl}(t_{j+1}))} \dd{\sigma}_j\right|^n\cdot \left|\sup_{j}\int_{{\mathcal{V}}_j\backslash{\mathcal{V}}_j^\epsilon}\f{\tilde{w}(v_j)}{\tilde{w}(V_{cl}(t_{j+1}))} \dd{\sigma}_j\right|^{k-1-n}.\nonumber
\end{align}
Notice that 
$$
|V_{cl}(t_{j+1})-v_j|\leq \int_{t_{j+1}}^{t_j}|G(\tau,X(\tau))|\dd\tau\leq T_0\|G\|_{L^{\infty}_{t,x}}\ll1,
$$ 
for
$\|G\|_{L^{\infty}_{t,x}}$ sufficiently small. Then it holds that $$\left|\sup_{j}\int_{{\mathcal{V}}_j^\epsilon}\f{\tilde{w}(v_j)}{\tilde{w}(V_{cl}(t_{j+1}))} \dd{\sigma}_j\right|\sim1.$$ Taking $k-2=N\left(\left[ \frac{2 T_0}{\epsilon}\right]+1\right)$, we have
\begin{align}
\int_{\Pi _{j=1}^{k-1}{\mathcal{V}}_{j}} ~\Fi_{\{{t}_k>0\}}~ \dd\Sigma_k(t_{k-1})
&\leq\left\{ 2N\left(\left[ \frac{2 T_0}{\epsilon}\right]+1\right)\right\}^{\left[ \frac{2 T_0}{\epsilon}\right]+2}  (C\epsilon)^{\f{N}{2}\left(2+\left[ \frac{2 T_0}{\epsilon}\right]\right)}\nonumber\\
&\leq \left\{ 4N\left[ \frac{2 T_0}{\epsilon}\right] (C\epsilon)^{\frac{N}{2}}\right\}^{\left[ \frac{2 T_0}{\epsilon}\right]+2} \nonumber\\
&\leq \left\{ C_{N} \cdot T_0\cdot \epsilon^{\frac{N}{2}-1}\right\}^{\left[ \frac{2 T_0}{\epsilon}\right]+2}.\nonumber
\end{align}
Taking $\epsilon=\left(\frac{1}{2C_{N} \cdot T_0}\right)^{\frac{1}{\frac{N}{2}-1}}$ which is small for $T_0$ large, we have  $ C_{N} \cdot T_0\cdot \epsilon^{\frac{N}{2}-1}=\frac12$. Moreover, we note that
$\left[ \frac{2 T_0}{\epsilon}\right]+2\cong C_{N}T_0^{1+\frac{1}{\frac{N}{2}-1}}$ if $T_0$ is large. Finally, we take $N=10$, so that $\left[ \frac{2 T_0}{\epsilon}\right]+2\cong CT_0^{\frac54}$  and $k=10\left\{\left[ \frac{2 T_0}{\epsilon}\right]+1\right\}+2\cong CT_0^{\frac54}$. Then \eqref{3.1.3-1}  follows. Therefore we have completed the proof of Lemma \ref{lm3.2}.
\end{proof}

\begin{proposition}
Let $\beta>3$, $0\leq q<1$ and $\lambda>0$. Assume that for each $i=0,1,2,\cdots $, $h^i(t,x,v)$ is time-periodic function with period $T>0$ and satisfies $$\|h^i\|_{L^\infty_{t,x,v}}+|h^i|_{L^{\infty}_tL^\infty_{\pm}}<\infty.$$ Then there exist
two universal constants $C>0$ and $k\gg1$, independent of $i$ and $\lambda$, such that if $|X_{w}-1|_{C^2}$ sufficiently small, it holds, for $i\geq k$, that
\begin{align}\label{3.1.4}
&\|h^{i+1}\|_{L^\infty_{t,x,v}}+|h^{i+1}|_{L^{\infty}_tL^{\infty}_{\pm}}\nonumber\\
&\quad\leq \frac18 \max_{0\leq l\leq k}\|h^{i-l}\|_{L^\infty_{t,x,v}}+C \max_{0\leq l\leq k}\left\|\f{h^{i-l}}{w}\right\|_{L^2(0,T;L^2)}+C\|\nu^{-1}wg\|_{L^\infty_{t,x,v}}+C|wr|_{L^\infty_tL^{\infty}_{-}}.
\end{align}
Moreover, if $h^i\equiv h$ for $i=1,2,\cdots$, that is $h$ is a solution, then it holds that
\begin{align}\label{3.1.5}
\|h\|_{L^\infty_{t,x,v}}+|h|_{L^\infty_tL^{\infty}_\pm}\leq C \|\nu^{-1}wg\|_{L^\infty_{t,x,v}}+C|wr|_{L^{\infty}_tL^\infty_-}+C\left\|\f{h}{w}\right\|_{L^2(0,T;L^2)}.
\end{align}
\end{proposition}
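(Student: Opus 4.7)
The approach is the standard $L^2$-$L^\infty$ interplay of Guo \cite{Guo2} and Esposito et al \cite{EGKM}, adapted to the time-periodic moving-boundary setting by exploiting periodicity to trace the mild formulation of Lemma \ref{lm3.1} arbitrarily far into the past. First fix $T_0 \gg 1$ and $k = C_1 T_0^{5/4}$ as in Lemma \ref{lm3.2}. The smallness of $\|X_w-1\|_{C^2}$ ensures $\|G\|_{L^\infty_{t,x}}$ is small, so $\tilde\nu \geq \nu_0/2$ with $\nu_0 := \inf_v \nu(v) > 0$, and we pick $T_0$ large enough that both $(1/2)^{\hat{C}_2 T_0^{5/4}}$ and $e^{-\nu_0 T_0/2}$ are much less than $1/16$. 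Apply Lemma \ref{lm3.1} with $s = t - T_0$ to write $h^{i+1}(t,x,v) = \sum_{j=1}^{11} J_j$.

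The terms $J_1, J_4, J_5, J_{11}$ are bounded directly. For $J_1$: periodicity of $h^{i+1}$ and the prefactor $e^{-\int_s^t(\tilde\nu+\lambda)\,d\tau'} \leq e^{-\nu_0 T_0/2}$ give $|J_1| \leq (1/32)\|h^{i+1}\|_{L^\infty_{t,x,v}}$, to be absorbed into the LHS. Lemma \ref{lm3.2} yields
\[
|J_{11}| \leq (1/2)^{\hat{C}_2 T_0^{5/4}}\|h^{i+2-k}\|_{L^\infty_{t,x,v}} \leq (1/16)\max_{0\leq l\leq k}\|h^{i-l}\|_{L^\infty_{t,x,v}}.
\]
The boundary-source terms $J_4, J_5$ are bounded by $C|wr|_{L^\infty_t L^\infty_-}$, using that each $d\sigma_j$ is a probability-type measure and that $\tilde w^{-1}$ is bounded since $q<1$. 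The indicator $\Fi_{\{t_{l+1}\leq s<t_l\}}$ in $J_6$ vanishes once $s$ is sufficiently negative.

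The remaining pieces $J_2+J_3, J_7+J_8, J_9+J_{10}$ contain both the source $wg$ and the operator-term $K_w h^{i-l}$. The $wg$-parts are bounded by $C\|\nu^{-1}wg\|_{L^\infty_{t,x,v}}$ via $\int_{-\infty}^t e^{-\int_\tau^t\tilde\nu\,d\tau'}\,d\tau \lesssim \nu(v)^{-1}$. For each $K_w h^{i-l}$-piece, apply Lemma \ref{lm3.1} a second time at the base point $(\tau, X_{cl}(\tau), u)$ to expand $h^{i-l}$ again; the heart of the argument is the resulting double-kernel integral
\[
\int\!\!\int k_w(V_{cl}(\tau),u)\,k_w(V'_{cl}(\tau'),u')\,h^{i-l-1}(\tau', X'_{cl}(\tau'), u')\,du\,du'\,d\tau',
\]
together with its own source and boundary contributions handled as above. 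Splitting $k_w$ through an indicator $\Fi_{\{|u|\leq N,\,|u-v|\geq 1/N\}}$ for some $N \gg 1$, the tail contributes $O(1/N)\max_l\|h^{i-l}\|_{L^\infty_{t,x,v}}$ by \eqref{k2}; on the compact good region, Cauchy-Schwarz in $(u',\tau')$ combined with the change of variables $u' \mapsto X'_{cl}(\tau')$ (non-degenerate away from the grazing set) converts the inner integral into $C\|h^{i-l-1}/w\|_{L^2(s,t;L^2_{x,v})}$, and periodicity bounds this by $C(T_0)\|h^{i-l-1}/w\|_{L^2(0,T;L^2)}$.

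Choose $N$ large so that the kernel-tail contributions together with $J_{11}$ total at most $(1/8)\max_l\|h^{i-l}\|_{L^\infty_{t,x,v}}$, and absorb $J_1$ into the LHS to obtain \eqref{3.1.4}; the boundary-norm bound follows by the same argument combined with the diffuse-reflection identity on $\gamma_-$. For \eqref{3.1.5}, set $h^i \equiv h$ in \eqref{3.1.4} and absorb the resulting $(1/8)\|h\|_{L^\infty_{t,x,v}}$ into the LHS. The principal obstacle I expect is the change-of-variables step in the second expansion: one must handle the grazing set, isolate the multi-bounce boundary contributions introduced by both the first and second applications of Lemma \ref{lm3.1}, and keep all constants independent of $\lambda \geq 0$, which forces every exponential-decay argument to use $\tilde\nu$ rather than $\tilde\nu+\lambda$.
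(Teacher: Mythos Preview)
Your outline is broadly the paper's $L^2$--$L^\infty$ argument, but two concrete points are wrong and one structural choice is different from the paper. First, $J_6$ does \emph{not} vanish: the indicators $\Fi_{\{t_{l+1}\leq s<t_l\}}$ for $1\leq l\leq k-1$ partition the event $\{t_k\leq s\}$, which is exactly the complement of the $J_{11}$ event, so one of them is always on. What controls $J_6$ is the exponential factor $e^{-\int_s^t\tilde\nu\,d\tau'}\leq e^{-\nu_0 T_0/2}$ (together with the uniform bound $\prod_j \tilde w(v_j)/\tilde w(V_{cl}(t_{j+1}))\leq (1+Ck^{-1})^{k}\leq C$, which you need because $d\Sigma_l$ is not a probability measure---the weight ratios must be shown close to $1$ via smallness of $G$). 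Second, in your double-kernel integral the trajectory $X'_{cl}(\tau')$ is launched from $(\tau,X_{cl}(\tau),u)$ and therefore depends on the \emph{first} kernel variable $u$, not on $u'$; the correct change of variables is $u_1\mapsto X'_{cl}(\tau')$, which lands the $L^2$ norm on $h^{i-l-1}(\tau',\,\cdot\,,u')$ after Cauchy--Schwarz in $(u,u')$.

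Structurally, the paper does \emph{not} re-expand $J_7$ and $J_9$ via a second application of Lemma~\ref{lm3.1}. It treats them directly: split $\{|v_l|\leq N\}$ versus $\{|v_l|>N\}$, kill the large-$|v_l|$ part by the Gaussian in $d\sigma_l$, and on the compact part use the change of variables $v_{l,1}\mapsto X_{cl}(\tau)$ (with $|\partial_{v_{l,1}}X_{cl}(\tau)|\geq (t_l-\tau)/2\geq 1/(2N)$ after excising $\tau\in[t_l-1/N,t_l]$) to get the $L^2(0,T;L^2)$ term. Only the interior piece $J_2$ is iterated once, producing the double-kernel $B_2$ that you wrote. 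Your plan of re-expanding every $K_w h^{i-l}$ term would in principle work but compounds the bookkeeping (you now have $k$ nested boundary trees) with no gain; the paper's route keeps $J_7,J_9$ single-layer and is cleaner.
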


\begin{proof}
Using Lemma \ref{lm3.1}, we take $s=-nT$ in \eqref{3.1.3} with integer $n\gg1$ sufficiently large and take $T_0=(n+1)T$ and $k=C_1((n+1)T)^{\f54}$ so that \eqref{3.1.3-1} holds for any $(t,x,v)\in \{[0,T]\times (0,1)\times \mathbb{R}^3\}\cup\{[0,T]\times\g_+]\}$. Notice that if $\|G\|_{L^{\infty}_{t,x}}$ sufficiently small, it holds that 
$$
|\tilde{v}(t,x,v)|=|\nu(v)-\f{G(t,x)v_1}{2}-\f{G(t,x)\pa_{v_1}w}{w}|\geq c_1\nu(v)\geq \nu_0>0,
$$ 
for some positive constants $c_1$ and $\nu_0$. Then by a direct computation, we have
\begin{align}\label{3.1.6}
|J_1|\leq Ce^{-\nu_0(t+nT)}\|h^{i+1}\|_{L^{\infty}_{t,x,v}},
\end{align}
\begin{align}\label{3.1.7}
|J_3|\leq C\int_{\max\{t_1,s\}}^te^{-c_1\nu(v)(t-\tau)}\nu(v)\dd\tau\cdot\|\nu^{-1}wg\|_{L^{\infty}_{t,x,v}}\leq C\|\nu^{-1}wg\|_{L^{\infty}_{t,x,v}},
\end{align}
and
\begin{align}\label{3.1.8}
|J_4|\leq C|wr|_{L^{\infty}_tL^{\infty}_-}.
\end{align}
For $J_6$, notice that
\begin{align*}
|V_{cl}(t_{j+1})-v_j|&\leq \int_{t_{j+1}}^{t_j}|G(\tau,X(\tau;t_j,x_j,v_j))|\dd\tau\leq \|G\|_{L^{\infty}_{t,x}}(t_j-t_{j+1})\nonumber\\
&\leq (n+1)T\|G\|_{L^{\infty}_{t,x}}.
\end{align*}
Then if $\|G\|_{L^{\infty}_{t,x}}\ll_{n}1,$ it holds that
$$
\left|\sup_{j}\int_{\CV_j}\f{\tilde{w}(v_j)}{\tilde{w}(V_{cl}(t_{j+1}))}\dd\sigma_j\right|\leq (1+\f{C}{n^{5/4}}),
$$
which implies that \begin{align}\label{3.1.9}
|J_6|&\leq Cn^{5/4}e^{-\nu_0(t+nT)}\max_{1\leq l\leq k-1}\|h^{i+1-l}\|_{L^{\infty}_{t,x,v}}\cdot\left|\sup_{j}\int_{\CV_j}\f{\tilde{w}(v_j)}{\tilde{w}(V_{cl}(t_{j+1}))}\dd\sigma_j\right|^{C_1n^{5/4}}\nonumber\\
&\leq Cn^{5/4}(1+\f{C}{n^{5/4}})^{Cn^{5/4}}e^{-\nu_0(t+nT)}\max_{1\leq l\leq k-1}\|h^{i+1-l}\|_{L^{\infty}_{t,x,v}}\nonumber\\
&\leq Cn^{5/4}e^{-\nu_0(t+nT)}\max_{1\leq l\leq k-1}\|h^{i+1-l}\|_{L^{\infty}_{t,x,v}}.
\end{align}
Similarly, it holds that
\begin{align}\label{3.1.10}
|J_5|\leq Cn^{5/4}|wr|_{L^{\infty}_tL^{\infty}_-},
\end{align}
and
\begin{align}\label{3.1.11}
|J_8|+|J_{10}|\leq Cn^{5/4}\|\nu^{-1}wg\|_{L^{\infty}_{t,x,v}}.
\end{align}
By using \eqref{3.1.3-1}, we have
\begin{align}\label{3.1.12}
|J_{11}|\leq \left(\f12\right)^{\hat{C}_2(nT)^{5/4}}\cdot\max_{1\leq l\leq k-1}\|h^{i+1-l}\|_{L^{\infty}_{t,x,v}}.
\end{align}
Now we consider $J_7$ and $J_9$. Notice that
\begin{align}
|J_7|&\leq  C\sum_{l=1}^{k-1}\int_{\Pi _{j=1}^{l-1}{\mathcal{V}}_{j}}\Pi_{j=1}^{l-1}\f{\tilde{w}(v_j)}{\tilde{w}(V(t_{j+1}))} \dd{\sigma}_j \nonumber\\
&\qquad\times \int_{\mathcal{V}_l}\int_{\mathbb{R}^3} \int_s^{{t}_l} e^{-\nu_0(t_l-\tau)} \Fi_{\{{t}_{l+1}\leq s<{t}_l\}} \tilde{w}(v_l) |k_w(V_{cl}(\tau),v') h^{i-l}(\tau,X_{cl}(\tau),v')|\dd \tau\dd v' \dd{\sigma}_l \nonumber\\
&=C\sum_{l=1}^{k-1}\int_{\Pi _{j=1}^{l-1}{\mathcal{V}}_{j}} \Pi_{j=1}^{l-1}\f{\tilde{w}(v_j)}{\tilde{w}(V(t_{j+1}))} \dd{\sigma}_j \int_{\mathcal{V}_l\cap \{|v_l|\geq N\}}\int_{\mathbb{R}^3}\int_s^{{t}_l} (\cdots)\dd \tau\dd v' \dd{\sigma}_l \nonumber\\
&\quad+C\sum_{l=1}^{k-1}\int_{\Pi _{j=1}^{l-1}{\mathcal{V}}_{j}}  \Pi_{j=1}^{l-1}\f{\tilde{w}(v_j)}{\tilde{w}(V(t_{j+1}))} \dd{\sigma}_j \int_{\mathcal{V}_l\cap \{|v_l|\leq N\}}\int_{\mathbb{R}^3} \int_s^{{t}_l}  (\cdots)\dd \tau\dd v' \dd{\sigma}_l\nonumber\\
&:=\sum_{l=1}^{k-1} (J_{71l}+J_{72l}).\nonumber
\end{align}
For $J_{71l}$, we have
\begin{align}
\sum_{l=1}^{k-1}|J_{71l}|&\leq Cn^{5/4}(1+\f{C}{n^{5/4}})^{Cn^{5/4}}\max_{1\leq l\leq k-1}\|h^{i-l}\|_{L^{\infty}_{t,x,v}}\cdot\int_{|v_l|\geq N}\tilde{w}(v_l)\mu(v_l)(1+|v_l|)\dd v_l\nonumber\\
&\leq \f{Cn^{5/4}}{N}\max_{1\leq l\leq k-1}\|h^{i-l}\|_{L^{\infty}_{t,x,v}}.\nonumber
\end{align}
For $J_{72l}$, we split
\begin{align}
&J_{72l}= C\int_{\Pi _{j=1}^{l-1}{\mathcal{V}}_{j}}\Pi_{j=1}^{l-1}\f{\tilde{w}(v_j)}{\tilde{w}(V(t_{j+1}))} \dd{\sigma}_j\bigg\{\int_{t_l-\f1N}^{t_l}\int_{\CV_{l}\cap\{|v_l|\leq N\}}\int_{\mathbb{R}^3}(\cdots)\dd\tau\dd\sigma_l\dd v'\nonumber\\
&+\int_{s}^{t_l-\f1N}\int_{\CV_{l}\cap\{|v_l|\leq N\}}\int_{|v'|\geq 2N}(\cdots)\dd\tau\dd\sigma_l\dd v'+\int_{s}^{t_l-\f1N}\int_{\CV_{l}\cap\{|v_l|\leq N\}}\int_{|v'|\leq 2N}(\cdots)\dd\tau\dd\sigma_l\dd v'
\bigg\}.\nonumber
\end{align}
Notice that if $|v_{l}|\leq N$, then it holds that
\begin{align}\label{a1}
|V_{cl}(\tau)|&\leq |V_{cl}(\tau)-v_l|+|v_l|\leq|v_l|+\int_\tau^{t_l}\left|G[\tau',X_{cl}(\tau')]\right|\dd\tau'\nonumber\\
&\leq |v_l|+(n+1)T\|G\|_{L^{\infty}_{t,x}}\leq \f{3N}{2}.
\end{align}
Hence by using \eqref{k1}, it holds that $$\int_{|v'|\geq 2N}|k_w(V_{cl}(\tau),v')|\dd v'\leq e^{-\f{N^2}{64}}\int_{|v'|\geq 2N}|k_w(V_{cl}(\tau),v')|e^{\f{|V_{cl}(\tau)-v'|^2}{16}}\dd v'\leq C e^{-\f{N^2}{64}}.
$$
Therefore, the integral in brackets is
\begin{align}
&\leq \f{C}{N}\max_{1\leq l\leq k-1}\|h^{i-l}\|_{L^{\infty}_{t,x,v}}+C\int_{s}^{t_l-\f1N}\int_{\CV_{l}\cap\{|v_l|\leq N\}}\int_{|v'|\leq 2N}(\cdots)\dd\tau\dd\sigma_l\dd v'\nonumber\\
&\leq\f{C}{N}\max_{1\leq l\leq k-1}\|h^{i-l}\|_{L^{\infty}_{t,x,v}}\nonumber\\
&\quad+C\sqrt{\int_{s}^{t_l-\f1N}e^{-\nu_0(t_l-\tau)}\int_{\{|v_l|\leq N\}}\int_{|v'|\leq 2N}e^{-\f{|v_l|^2}{8}}|k_w(V_{cl}(\tau),v')|^2\dd\tau\dd v_l\dd v'}\nonumber\\
&\quad\quad \times\sqrt{\int_{s}^{t_l-\f1N}e^{-\nu_0(t_l-\tau)}\int_{\CV_{l}\cap\{|v_l|\leq N\}}\int_{|v'|\leq 2N}\Fi_{\{t_{l+1}\leq s\leq {t_l}\}}|h^{i-l}(\tau,X_{cl}(\tau),v')|^2\dd\tau\dd\sigma_l\dd v'}\nonumber\\
&\leq \f{C}{N}\max_{1\leq l\leq k-1}\|h^{i-l}\|_{L^{\infty}_{t,x,v}}\nonumber\\
&\quad+\underbrace{C_N\sqrt{\int_{s}^{t_l-\f1N}\int_{\CV_{l}\cap\{|v_l|\leq N\}}\int_{|v'|\leq 2N}\Fi_{\{t_{l+1}\leq s\leq {t_l}\}}|f^{i-l}(\tau,X_{cl}(\tau),v')|^2\dd\tau\dd v_l\dd v'}}_{\Delta}.\nonumber
\end{align}
From the characteristic ODE \eqref{2.1.1}, it holds that
$$
X_{cl}(\tau)=x_{l}-(t_l-\tau)v_{l,1}+\int_{\tau}^{t_l}G(\tau',X_{cl}(\tau'))(\tau'-\tau)\dd\tau'.
$$
Then we have
$$
\f{\pa X_{cl}(\tau)}{\pa_{v_{l,1}}}=-(t_l-\tau)+\int_{\tau}^{t_l}\pa_{x}G(\tau',X_{cl}(\tau'))
\cdot\pa_{v_{l,1}}X_{cl}(\tau')(\tau'-\tau)\dd\tau'.
$$
Since $\|\pa_xG\|_{L^{\infty}_{t,x}}<\infty$, it holds that $|\pa_{v_{l,1}}X_{cl}(\tau')|\leq Ce^{C(t_l-\tau')},$ for some constant $C>0$, which implies that
\begin{align}\label{3.1.13-1}
\left|\f{\pa X_{cl}(\tau)}{\pa_{v_{l,1}}}\right|\geq(t_l-\tau)(1-C\|\pa_xG\|_{L^{\infty}_{t,x}}e^{C(n+1)T})\geq (t_{l}-\tau)(1-C_n\|\pa_xG\|_{L^{\infty}_{t,x}}).
\end{align}
Therefore, if $\|\pa_{x}G\|_{L^{\infty}_{t,x}}\leq C\|\dot{V}_w\|_{L^{\infty}}\ll_n 1$, we have 
\begin{align}\label{3.1.13-2}
\left|\f{\pa X_{cl}(\tau)}{\pa_{v_{l,1}}}\right|\geq \f{t_l-\tau}{2}\geq \f{1}{2N},
\end{align}
for any $s\leq \tau\leq t_l-\f{1}{N}$. Hence we make change of variable $v_{l,1}\rightarrow X_{cl}(\tau)$ in $\Delta$ to deduce that
$$\Delta\leq C_{N,n}\|f^{i-l}\|_{L^{2}(0,T;L^2)}.
$$
So collecting these estimates, we obtain
\begin{align}\label{3.1.14}
|J_7|\leq \f{Cn^{5/4}}{N}\max_{1\leq l\leq k-1}\|h^{i-l}\|_{L^{\infty}_{t,x,v}}+C_{N,n}\max_{1\leq l\leq k-1}\|f^{i-l}\|_{L^{2}(0,T;L^2)}.
\end{align}
Similarly, we have
\begin{align}\label{3.1.15}
|J_9|\leq \f{Cn^{5/4}}{N}\max_{1\leq l\leq k-1}\|h^{i-l}\|_{L^{\infty}_{t,x,v}}+C_{N,n}\max_{1\leq l\leq k-1}\|f^{i-l}\|_{L^{2}(0,T;L^2)}.
\end{align}
Combining \eqref{3.1.3}, \eqref{3.1.6}, \eqref{3.1.7}, \eqref{3.1.8}, \eqref{3.1.9}, \eqref{3.1.10}, \eqref{3.1.11}, \eqref{3.1.12}, \eqref{3.1.14} and \eqref{3.1.15}, we obtain, for any $t\in [0,T]$ and almost every $(x,v)\in [(0,1)\times \mathbb{R}^3]\cup \g_+$, that
\begin{align}\label{3.1.16}
|h^{i+1}(t,x,v)|\leq \int_{\max\{t_1,s\}}^te^{-\nu_0(t-\tau)}\int_{\mathbb{R}^3}\left|k_w(V_{cl}(\tau),v')h^i(\tau,X_{cl}(\tau),v')\right|\dd v'\dd \tau+A_i(t),
\end{align}
where we have denoted
\begin{align}
A_i(t):=&Ce^{-\nu_0(t+nT)}\|h^{i+1}\|_{L^{\infty}_{t,x,v}}+Cn^{5/4}\bigg\{e^{-\nu_0(t+nT)}+\left(\f12\right)^{\hat{C}_2(nT)^{\f54}}+\f1N\bigg\}\
\max_{1\leq l\leq k-1}\|h^{i+1-l}\|_{L^{\infty}_{t,x,v}}\nonumber\\
&+Cn^{5/4}\{\|\nu^{-1}wg\|_{L^{\infty}_{t,x,v}}+|wr|_{L^{\infty}_tL^\infty_-}\}+C_{N,n}\max_{1\leq l\leq k-1}\left\|f^{i-l}\right\|_{L^{2}(0,T;L^2)}.\nonumber
\end{align}
Now applying \eqref{3.1.16} to $h^{i}(\tau,X_{cl}(\tau),v')$, we have
\begin{align}\label{3.1.17}
|h^{i+1}(t,x,v)|\leq& A_i(t)+\int_{\max\{t_1,s\}}^te^{-\nu_0(t-\tau)}\int_{\mathbb{R}^3}|k_w(V_{cl}(\tau),v')A_{i-1}(\tau)\dd v'\dd \tau\nonumber\\
&+\int_{\max\{t_1,s\}}^te^{-\nu_0(t-\tau)}\dd\tau\int_{\mathbb{R}^3}\dd v'\int_{\max\{t_1',s\}}^\tau\int_{\mathbb{R}^3}
U(\tau',v',v'';\tau,v)\dd v''\dd\tau'\nonumber\\
=&A_i(t)+B_1+B_2.
\end{align}
Here we have used the notations $$[t_1',X_{cl}'(\tau'),V_{cl}'(\tau')]=[\tau-t_{\mathbf{b}}(\tau,X_{cl}(\tau),v'),X_{cl}(\tau';\tau,X_{cl}(\tau),v'),V_{cl}(\tau';\tau,X_{cl}(\tau),v')],$$
and
$$
U(\tau',v',v'';\tau,v):=\left|e^{-\nu_0(\tau-\tau')}k_w(V_{cl}(\tau),v')k_w(V_{cl}'(\tau'),v'')h^{i-1}(\tau',X_{cl}'(\tau'),v'')\right|.
$$
By using \eqref{k2}, it is straightforward to verify that
\begin{align}\label{3.1.18}
|B_{1}|\leq &Cn^{5/4}\bigg\{e^{-\nu_0nT}+\left(\f12\right)^{\hat{C}_2(nT)^{\f54}}+\f1N\bigg\}\
\max_{0\leq l\leq k}\|h^{i-l}\|_{L^{\infty}_{t,x,v}}\nonumber\\
&+Cn^{5/4}\{\|\nu^{-1}wg\|_{L^{\infty}_{t,x,v}}+|wr|_{L^{\infty}_{t}L^{\infty}_-}\}+C_{N,n}\max_{0\leq l\leq k}\left\|f^{i-l}\right\|_{L^{2}(0,T;L^2)}.
\end{align}
For $B_2$, we divide it into two cases.

\medskip
\noindent{\it Case 1:} $|v|\geq N$. Similar as before, we have
$$|V_{cl}(\tau)|\geq |v|-(n+1)T\|G\|_{L^{\infty}_{t,x}}\geq \f{N}{2}.$$
Then by using \eqref{k2}, it holds that $|B_2|\leq \f{C}{N}\|h^{i-1}\|_{L^{\infty}_{t,x,v}}$.

\medskip
\noindent{\it Case 2:} $|v|\leq N$. We split the integral domain of $U$ with respect to $\dd\tau'\dd v''\dd v'$ into the following four parts:
\begin{align}
&\{|v'|\geq 2N\}\cup\{|v'|\leq 2N, |v''|>3N\}\cup\{|v'|\leq 2N, |v''|\leq 3N, \tau-\f1N\leq \tau'\leq \tau\}\nonumber\\
&\quad\cup\{|v'|\leq 2N, |v''|\leq 3N, \max\{t_1',s\}\leq \tau'\leq \tau-\f1N\}:=\cup_{i=1}^4\mathcal{O}_i\nonumber.
\end{align}
For $\mathcal{O}_1$, the same as \eqref{a1}, it holds that $|V_{cl}(\tau)-v'|\geq \f{N}{2}$, which implies that
\begin{equation}
|k_w(V_{cl}(\tau),v')|\leq e^{-\f{N^2}{64}}e^{\f{|V_{cl}(\tau)-v'|^2}{16}}|k_w(V_{cl}(\tau),v')|.\nonumber
\end{equation}
By \eqref{k1}, it holds that
\begin{equation}
\int_{\mathbb{R}^3}|k_w(V_{cl}(\tau),v')|e^{\f{|V_{cl}(\tau)-v'|^2}{16}}\leq C.\nonumber
\end{equation}
So that
\begin{align}\label{3.1.19}
\int_{\max\{t_1,s\}}^te^{-\nu_0(t-\tau)}\int_{\mathcal{O}_1}U(\tau',v',v'';\tau,v)\dd v''\dd\tau'\dd v'\dd \tau\leq Ce^{-\f{N^2}{64}}\|h^{i-1}\|_{L^{\infty}_{t,x,v}}.
\end{align}
Similarly,
\begin{align}\label{3.1.20}
\int_{\max\{t_1,s\}}^te^{-\nu_0(t-\tau)}\int_{\mathcal{O}_2}U(\tau',v',v'';\tau,v)\dd v''\dd\tau'\dd v'\dd \tau\leq Ce^{-\f{N^2}{64}}\|h^{i-1}\|_{L^{\infty}_{t,x,v}}.
\end{align}
As for $\mathcal{O}_3$, it is straightforward to obtain that
\begin{align}\label{3.1.21}
\int_{\max\{t_1,s\}}^te^{-\nu_0(t-\tau)}\int_{\mathcal{O}_3}U(\tau',v',v'';\tau,v)\dd v''\dd\tau'\dd v'\dd \tau\leq \f{C}{N}\|h^{i-1}\|_{L^{\infty}_{t,x,v}}.
\end{align}
For $\mathcal{O}_4$, it holds from Holder's inequality that
\begin{align}\label{3.1.22}
\int_{\mathcal{O}_4}&U(\tau',v',v'';\tau,v)\dd v''\dd\tau'\dd v'\nonumber\\
&\leq C_N\sqrt{\int_s^{\tau-\f1N}\int_{|v'|\leq 2N}\int_{|v''|\leq 3N}|k_w(V_{cl}(\tau),v')k_w(V_{cl}'(\tau'),v'')|^2\dd v'\dd v''\dd\tau'}\nonumber\\
&\quad\times\sqrt{\int_s^{\tau-\f1N}\int_{|v'|\leq 2N}\int_{|v''|\leq 3N}
\Fi_{\{\max\{t_1',s\}\leq \tau'\leq \tau\}}|f^{i-1}(\tau',X_{cl}'(\tau'),v'')|^2\dd v'\dd v''\dd\tau'}\nonumber\\
&\leq C_{N}\sqrt{\int_s^{\tau-\f1N}\int_{|v'|\leq 2N}\int_{|v''|\leq 3N}
\Fi_{\{\max\{t_1',s\}\leq \tau'\leq \tau\}}|f^{i-1}(\tau',X_{cl}'(\tau'),v'')|^2\dd v'\dd v''\dd\tau'}.
\end{align}
Similar to obtain \eqref{3.1.13-1} and \eqref{3.1.13-2}, we have, for any $s\leq \tau'\leq \tau-\f{1}{N}$, that
$$
\left|\f{\pa X_{cl}(\tau';\tau,X_{cl}(\tau),v')}{\pa v_{1}'}\right|\geq \f{1}{2N},
$$
provided that $\|\pa_xG\|_{L^{\infty}_{t,x}}\leq C\|\dot{V}_w\|_{L^{\infty}}\ll_{n}1$. Then making change of variable $v'_1\rightarrow X_{cl}'(\tau)$, the R.H.S of \eqref{3.1.22} is bounded as
$$
 C_{N}\sqrt{\int_s^T\left\|f^{i-1}(\tau')\right\|^2_{L^2}\dd\tau'}\leq C_{N,n}\|f^{i-1}\|_{L^2(0,T;L^2)}.
$$
Then it holds that
\begin{align}
\int_{\max\{t_1,s\}}^te^{-\nu_0(t-\tau)}\int_{\mathcal{O}_4}U(\tau',v',v'';\tau,v)\dd v''\dd\tau'\dd v'\dd\tau\leq C_{N,n}\|f^{i-1}\|_{L^2(0,T;L^2)},\nonumber
\end{align}
which, together with \eqref{3.1.19}, \eqref{3.1.20} and \eqref{3.1.21}, yields that
\begin{align}
B_2\leq \f{C}{N}\|h^{i-1}\|_{L^{\infty}_{t,x,v}}+C_{N,n}\|f^{i-1}\|_{L^2(0,T;L^2)}.\nonumber
\end{align}
We combine this with \eqref{3.1.17} and \eqref{3.1.18} to get, for any $t\in [0,T]$ and almost every $(x,v)\in\big[(0,1)\times \mathbb{R}^3\big]\cup\gamma_+$, that
\begin{align}\label{3.1.23}
|h^{i+1}(t,x,v)|\leq &Ce^{-\nu_0 nT}\|h^{i+1}\|_{L^{\infty}_{t,x,v}}+Cn^{5/4}\bigg\{e^{-\nu_0nT}+\left(\f12\right)^{\hat{C}_2(nT)^{\f54}}+\f1N\bigg\}\max_{0\leq l\leq k} \|h^{i-l}\|_{L^{\infty}_{t,x,v}}\nonumber\\
&+Cn^{5/4}\{\|\nu^{-1}wg\|_{L^{\infty}_{t,x,v}}+|wr|_{L^{\infty}_{t}L^{\infty}_-}\}+
C_{N,n}\max_{0\leq l\leq k}\|f^{i-l}\|_{L^{2}(0,T;L^2)}.
\end{align}
Then \eqref{3.1.4} follows from \eqref{3.1.23} by taking both $n$ and $N$ suitably large. \eqref{3.1.5} directly follows from \eqref{3.1.4}. Therefore, the proof of Proposition \ref{prop3.1} is complete.
\end{proof}

\subsection{Approximation solutions}
In this part, we will show the existence of time-periodic solution to the linear problem \eqref{3.0.1} by constructing a sequence of approximation solutions. For reader's convenience, we make an outline of procedure as follows:

\begin{itemize}
  \item[\underline{Step 1.}] In this step, we construct the solution $f^{n,\lambda}$ of the following time-periodic problem with $\lambda>0$:
\begin{equation}\label{3.2.1}\left\{
\begin{aligned}
&\pa_tf^{n,\lambda}+v_1\pa_x f^{n,\lambda}+\sqrt{\mu}^{-1}G(t,x)\pa_{v_1}(\sqrt{\mu}f^{n,\lambda})+\left[\nu(v)+\lambda\right]f^{n,\lambda}=g,\\
&f^{n,\lambda}(t,x,v)|_{\g_-}=(1-\f1n)P_{\g}f^{n,\lambda}+r.
\end{aligned}\right.
\end{equation}
 
\item[\underline{Step 2.}] Pass to the limit $n\rightarrow \infty$ to construct the time-periodic solution $f^{\lambda}$ to the following problem:
\begin{equation}\label{3.2.2}\left\{
\begin{aligned}
&\pa_tf^{\lambda}+v_1\pa_x f^{\lambda}+\sqrt{\mu}^{-1}G(t,x)\pa_{v_1}(\sqrt{\mu}f^{\lambda})+\left[\nu(v)+\lambda\right]f^{\lambda}=g,\\
&f^{\lambda}(t,x,v)|_{\g_-}=P_{\g}f^\lambda+r.
\end{aligned}\right.
\end{equation}

\item[\underline{Step 3.}] In this step, we will show that there exists a unique time-periodic solution $f^{\lambda}$ to the following Boltzmann equation with a penalty
\begin{equation}\label{3.2.3}\left\{
\begin{aligned}
&\pa_tf^\lambda+v_1\pa_x f^\lambda+\sqrt{\mu}^{-1}G(t,x)\pa_{v_1}(\sqrt{\mu}f^\lambda)+\left[\nu(v)+\lambda\right]f^\lambda=Kf^{\lambda}+g,\\
&f^\lambda(t,x,v)|_{\g_-}=P_{\g}f^\lambda+r,
\end{aligned}\right.
\end{equation}
for any $\lambda\geq\lambda_0$, where $\lambda_0>0$ is a suitably large constant. We remark that the zero-mass condition \eqref{3.0.2} is not necessary up to now. 

\item[\underline{Step 4.}] We will use a bootstrap argument to show that the existence of the solution to \eqref{3.2.3} for suitably large $\lambda$ indeed leads to the existence of the solution to \eqref{3.2.3} for $\lambda=0$, which is exact what we desire. In this step, the key point is to establish a uniform-in-$\lambda$ estimates on $\|f^{\lambda}\|_{L^2}$. We will see that the zero-mass condition \eqref{3.0.2} plays a crucial role in establishing such an estimate.   
\end{itemize}

In what follows, we will implement the above procedure step by step.

\begin{lemma}\label{lem3.2.1}
Let $\beta>3$, $0\leq q<1, $ $\lambda>0$ and positive integer $n_0\gg1$ so that $\frac18 (1-\frac2n+\frac{3}{2n^2})^{-\frac{k+1}{2}}\leq \frac12$ for $n\geq n_0$. Assume that $g$ and $r$ are time-periodic functions with period $T>0$ and satisfy
$$\|\nu^{-1}wg\|_{L^\infty_{t,x,v}}+|wr|_{L^\infty_{t}L^{\infty}_-}<\infty,$$ and $\|X_{w}-1\|_{C^2}$ is sufficiently small. Then there exists a unique solution $f^{n,\lambda}$ to \eqref{3.2.1}, which is time-periodic with period $T$, and satisfies
\begin{align}\label{3.2.6}
\|wf^{n,\lambda}\|_{L^{\infty}_{t,x,v}}+|wf^{n,\lambda}|_{L^{\infty}_tL^{\infty}_\pm}\leq C_{n,\lambda}\Big\{ |wr|_{L^\infty_tL^{\infty}_-}+\|\nu^{-1}wg\|_{L^\infty_{t,x,v}} \Big\}.
\end{align}
Here the positive constant $C_{n,\lambda}>0$ depends only on $\lambda$ and $n$.
\end{lemma}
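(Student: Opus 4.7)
The plan is to build $f^{n,\lambda}$ as the limit of an iteration scheme that decouples the damped diffuse-reflection boundary condition. Setting $f^{n,\lambda,0}\equiv 0$, for $i\ge 0$ I would define $f^{n,\lambda,i+1}$ as the unique bounded time-periodic solution of the transport-with-damping problem
\begin{equation*}
\left\{
\begin{aligned}
&\pa_t f^{n,\lambda,i+1}+v_1\pa_x f^{n,\lambda,i+1}+\sqrt{\mu}^{-1}G(t,x)\pa_{v_1}(\sqrt{\mu}f^{n,\lambda,i+1})+[\nu(v)+\lambda]f^{n,\lambda,i+1}=g,\\
&f^{n,\lambda,i+1}|_{\g_-}=(1-\tfrac1n)P_\g f^{n,\lambda,i}+r.
\end{aligned}
\right.
\end{equation*}
Since the damping $\lambda+\tilde\nu$ is bounded below by some $\nu_0>0$ once $\|G\|_{L^\infty_{t,x}}$ is small (which follows from the smallness of $\|X_w-1\|_{C^2}$), this problem can be solved explicitly by integrating along the characteristics \eqref{2.1.1} pulled from $s=-\infty$; the Duhamel integral converges absolutely, and time-periodicity is inherited from that of $g$, $r$, $G$ together with the periodicity of the characteristics supplied by Lemma \ref{lm2.1.2}.

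The next step is an $L^\infty$-estimate for $h^{n,\lambda,i+1}:=wf^{n,\lambda,i+1}$. I would adapt the mild formulation of Lemma \ref{lm3.1} to the present setting; the crucial simplification is that no $K$-operator couples the interior of the iteration, so the levels are linked only through the boundary, at which the integrand picks up an additional factor $(1-\tfrac1n)$. Tracing the back-time cycle through $k$ bounces, the contributions involving $h^{n,\lambda,i+1-l}$ carry a product of the damping factor $(1-\tfrac1n)^l$, the weight ratio $\tilde w(v_j)/\tilde w(V_{cl}(t_{j+1}))$ at each reflection (controlled by $1+C/n^{5/4}$ when $\|G\|_{L^\infty_{t,x}}$ is small), and the exponential decay $e^{-\lambda(t-t_l)}$; the residual probability $\int \Fi_{\{t_k>s\}}\,d\Sigma_{k-1}(t_k)$ is controlled via Lemma \ref{lm3.2} after pulling $s$ far back in time. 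Choosing $k$ as in the hypothesis so that the composite contraction constant is bounded by $(1-\tfrac2n+\tfrac{3}{2n^2})^{(k+1)/2}$, this should yield
\begin{align*}
\|h^{n,\lambda,i+1}\|_{L^\infty_{t,x,v}}+|h^{n,\lambda,i+1}|_{L^\infty_tL^\infty_\pm}\le \tfrac18\max_{0\le l\le k}\|h^{n,\lambda,i-l}\|_{L^\infty_{t,x,v}}+C_{n,\lambda}\bigl(\|\nu^{-1}wg\|_{L^\infty_{t,x,v}}+|wr|_{L^\infty_tL^\infty_-}\bigr),
\end{align*}
and part (1) of the iteration lemma then delivers the uniform bound \eqref{3.2.6}.

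For convergence I would run the same analysis on the differences $d^{i+1}:=f^{n,\lambda,i+1}-f^{n,\lambda,i}$, which satisfy the source-free version of the iteration with boundary data $(1-\tfrac1n)P_\g d^i$. The mild representation now produces an estimate of the form $a_{i+k+1}\le \tfrac18 A_i^k+C_{k,n,\lambda}\,\eta^{i+k+1}$ with $\eta=\sqrt{1-\tfrac2n+\tfrac{3}{2n^2}}$, and part (2) of the iteration lemma then yields geometric decay of $\|wd^i\|_{L^\infty_{t,x,v}}+|wd^i|_{L^\infty_tL^\infty_\pm}$. The $L^\infty$-limit $f^{n,\lambda}$ is the desired time-periodic solution satisfying \eqref{3.2.6}, and uniqueness follows by applying the same contraction estimate to the difference of any two solutions. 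The main technical obstacle is the careful bookkeeping of the damping factor $(1-\tfrac1n)^k$ against the geometric multiplicative error $(1+C/n^{5/4})^k$ generated by weight changes along characteristics in the presence of the external force — this is precisely where the threshold $n\ge n_0$ and the smallness of $\|X_w-1\|_{C^2}$ enter the hypothesis.
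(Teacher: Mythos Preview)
Your approach is a valid alternative, but it differs from the paper's in an important structural way, and some of the bookkeeping you describe actually belongs to the paper's route rather than yours.

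The paper does not attempt a pure $L^\infty$ argument here. Instead it first runs an $L^2$ energy estimate on the iterates $f^{i+1}$: the boundary damping $(1-\tfrac{1}{n})$ produces the contraction factor $\eta_n^2 = 1-\tfrac{2}{n}+\tfrac{3}{2n^2}$ at the $L^2$ level, yielding both a uniform $L^2$ bound \eqref{3.2.10} and $L^2$-Cauchy decay \eqref{3.2.9}. Only then does it invoke the general $L^\infty$--$L^2$ estimate \eqref{3.1.4}, which carries the term $\max_l\|f^{i-l}\|_{L^2(0,T;L^2)}$ on its right-hand side; the $L^2$ bounds are fed back in and the iteration lemmas \eqref{A.1}, \eqref{A.1-1} are applied. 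The hypothesis $n \ge n_0$ enters precisely so that $\eta_n^{k+1} \ge \tfrac{1}{4}$, the compatibility condition in \eqref{A.1-1}.

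Your route exploits a feature specific to this stage: since $K$ is absent from \eqref{3.2.1}, the $J_7, J_9$ contributions in the mild formulation vanish, so no $L^2$ term is ever generated, and the coefficient $\tfrac{1}{8}\max_l \|h^{i-l}\|$ comes directly from the exponential damping plus Lemma \ref{lm3.2}. That is correct and arguably cleaner for this particular lemma. But then the factor $\eta = \sqrt{1-\tfrac{2}{n}+\tfrac{3}{2n^2}}$ you invoke for the differences, and the threshold $n \ge n_0$, have no natural home in your argument: for the differences you would simply obtain $a_{i+1+k} \le \tfrac{1}{8}A_i^k$ with zero source, and \eqref{A.1} with $D=0$ already gives geometric decay. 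The ``balancing of $(1-\tfrac{1}{n})^k$ against $(1+C/n^{5/4})^k$'' you highlight is not what drives the contraction in either approach---in yours it is a harmless bonus, while in the paper's it lives at the $L^2$ level. The paper's route has the advantage that the same $L^2$--$L^\infty$ machinery is reused verbatim in the subsequent lemmas, where $K$ reappears and the $L^2$ term becomes indispensable.
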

\begin{proof}
We first study the following {\it in-flow} problem:
\begin{equation}\left\{
\begin{aligned}
&\pa_tf+v_1\pa_x f+\sqrt{\mu}^{-1}G(t,x)\pa_{v_1}(\sqrt{\mu}f)+\left[\nu(v)+\lambda\right]f=g,\\
&f(t,x,v)|_{\g_-}=r.\nonumber
\end{aligned}\right.
\end{equation}
Let $h=wf$. Then the equation of $h$ reads as
\begin{equation}\label{3.2.6-2}\left\{
\begin{aligned}
&\pa_th+v_1\pa_x h+G(t,x)\pa_{v_1}h+\tilde{\nu}(t,x,v)h=wg,\\
&h(t,x,v)|_{\g_-}=wr.
\end{aligned}\right.
\end{equation}
We can solve \eqref{3.2.6-2} by the method of characteristics. In fact, let $t\in \mathbb{R}$ and $(x,v)\in [0,1]\times \mathbb{R}^3\setminus \gamma_0\cup\gamma_-$. If $t_{\mathbf{b}}(t,x,v)<\infty$, we define
\begin{align}\label{3.2.6-3}
h(t,x,v)=&e^{-\int_{t-t_{\mathbf{b}}(t,x,v)}^t[\tilde{\nu}(\tau')+\lambda]\dd\tau'}wr(t-t_{\mathbf{b}}(t,x,v),x_{\mathbf{b}}(t,x,v),v_{\mathbf{b}}(t,x,v))\nonumber\\
&+\int_{t-t_{\mathbf{b}}(t,x,v)}^te^{-\int_{\tau}^t[\tilde{\nu}(\tau')+\lambda]\dd\tau'}wg(\tau,X(\tau;t,x,v),V(\tau;t,x,v))\dd\tau.
\end{align}
If $t_{\mathbf{b}}(t,x,v)=+\infty$, we define
\begin{align}\label{3.2.6-4}
h(t,x,v)=\int_{-\infty}^te^{-\int_{\tau}^t[\tilde{\nu}(\tau')+\lambda]\dd\tau'}wg(\tau,X(\tau;t,x,v),V(\tau;t,x,v))\dd\tau.
\end{align}
Now we show that
\begin{equation}
h(t,x,v) \text{ is time-periodic with period }T>0,\nonumber
\end{equation}
and
\begin{align}\label{3.2.6-6}
\|h\|_{L^{\infty}_{t,x,v}}+|h|_{L^{\infty}_tL^\infty_\pm}\leq C\big\{
\|\nu^{-1}wg\|_{L^{\infty}_{t,x,v}}+|wr|_{L^{\infty}_tL^\infty_-}\big\}.
\end{align}
On one hand, if $t_{\mathbf{b}}(t,x,v)<\infty$, then a change of variable $\tau'\rightarrow s'=\tau'-T$ shows that
$$
\begin{aligned}
\int_{t+T-t_{\mathbf{b}}(t+T)}^{t+T}&\tilde{\nu}\left(\tau',X(\tau';t+T,x,v),V(\tau';t+T,x,v)\right)\dd\tau'\\
&=\int_{t-t_{\mathbf{b}}(t)}^{t}\tilde{v}\left(s'+T,X(s'+T;t+T,x,v),V(s'+T;t+T,x,v)\right)\dd s'\\
&=\int_{t-t_{\mathbf{b}}(t)}^{t}\tilde{v}\left(s',X(s';t,x,v),V(s';t,x,v)\right)\dd s',
\end{aligned}
$$
where we have used the periodicity of $\tilde{\nu}$ and Lemma \ref{lm2.1.2}. Therefore, by making changing of variables $\tau'\rightarrow s':=\tau'-T, \tau\rightarrow s:=\tau-T$, we have:
$$\begin{aligned}
h(t+T,x,v)=&e^{-\int_{t+T-t_{\mathbf{b}}(t+T)}^t[\tilde{\nu}(\tau')+\lambda]\dd\tau'}wr(t+T-t_{\mathbf{b}}(t+T),x_{\mathbf{b}}(t+T),
v_{\mathbf{b}}(t+T))\nonumber\\
&+\int_{t+T-t_{\mathbf{b}}(t+T)}^{t+T}e^{-\int_{\tau}^{t+T}[\tilde{\nu}(\tau')+\lambda]\dd\tau'}wg(\tau,X(\tau;t+T,x,v),V(\tau;t+T,x,v))\dd\tau\nonumber\\
=&e^{-\int_{t-t_{\mathbf{b}}(t)}^t[\tilde{\nu}(s')+\lambda]\dd s'}wr(t+T-t_{\mathbf{b}}(t),x_{\mathbf{b}}(t),
v_{\mathbf{b}}(t))\nonumber\\
&+\int_{t-t_{\mathbf{b}}(t)}^{t}e^{-\int_{s}^{t}[\tilde{\nu}(s')+\lambda]\dd s'}wg(s+T,X(s;t,x,v),V(s;t,x,v))\dd s\nonumber\\
=&e^{-\int_{t-t_{\mathbf{b}}(t)}^t[\tilde{\nu}(s')+\lambda]\dd s'}wr(t-t_{\mathbf{b}}(t),x_{\mathbf{b}}(t),
v_{\mathbf{b}}(t))\nonumber\\
&+\int_{t-t_{\mathbf{b}}(t)}^{t}e^{-\int_{s}^{t}[\tilde{\nu}(s')+\lambda]\dd s'}wg(s,X(s;t,x,v),V(s;t,x,v))\dd s\\
=&h(t,x,v).
\end{aligned}
$$
On the other hand, if $t_{\mathbf{b}}(t,x,v)=+\infty$, we have, from Lemma \ref{lm2.1.2}, that $t_{\mathbf{b}}(t+T,x,v)=+\infty$. Hence it holds from changing of variables $\tau'\rightarrow s':=\tau'-T, \tau\rightarrow s:=\tau-T$ that
$$
\begin{aligned}
h(t+T,x,v)&=\int_{-\infty}^{t+T}e^{-\int_{\tau}^{t+T}[\tilde{\nu}(\tau')+\lambda]\dd\tau'}wg(\tau,X(\tau;t+T,x,v),V(\tau;t+T,x,v))\dd\tau\\
&=\int_{-\infty}^{t}e^{-\int_{s}^{t}[\tilde{\nu}(s')+\lambda]\dd s'}wg(s+T,X(s;t,x,v),V(s;t,x,v))\dd s\\
&=\int_{-\infty}^{t}e^{-\int_{s}^{t}[\tilde{\nu}(s')+\lambda]\dd s'}wg(s,X(s;t,x,v),V(s;t,x,v))\dd s=h(t,x,v).
\end{aligned}
$$
This shows that $h$ is time-periodic in $t$ with period $T$. Moreover, $\eqref{3.2.6-6}$ can be directly obtained from the explicit solution formulas \eqref{3.2.6-3} and \eqref{3.2.6-4}.

Now we consider the following approximation sequence with respect to \eqref{3.2.1}:
\begin{equation}\label{3.2.7}\left\{
\begin{aligned}
&\pa_tf^{i+1}+v_1\pa_xf^{i+1}+\sqrt{\mu}^{-1}G(t,x)\pa_{v_1}(\sqrt{\mu}f^{i+1})+\left[\nu(v)+\lambda\right]f^{i+1}=g,\\
&f^{i+1}=(1-\f1n)P_{\g}f^{i}+r,\\
&f^0\equiv0.
\end{aligned}
\right.
\end{equation}
By previous analysis, we know that $\{f^{i}\}_{i\geq 0}$ is well-defined and for each $i \geq 0$, $f^{i}$ is time-periodic with period $T>0$ and $h^{i}=wf^{i}\in L^{\infty}$. Next, we are going to establish uniform-in-$i$ estimate on $f^{i+1}$. Taking the inner product of \eqref{3.2.7} with $f^{i+1}$ over $[0,T]\times (0,1)\times \mathbb{R}^3$ and using the periodicity of $f^{i+1}$, we have
\begin{align}
&\int_0^T\lambda\|f^{i+1}(s)\|_{L^2}^2+\|\nu^{1/2}f^{i+1}(s)\|_{L^2}^2+\f12|f^{i+1}(s)|_{L^2_+}^2\dd s\nonumber\\
&\quad=\f12\int_0^T|f^{i+1}(s)|_{L^2_-}^2\dd s+\int_0^T\langle g+\f{G(t,x)v_1f^{i+1}}{2},f^{i+1}\rangle(s)\dd s.\nonumber
\end{align}
By Cauchy-Schwarz, it holds that
$$
\begin{aligned}
&\left|\int_{0}^T\langle g+\f{G(t,x)v_1f^{i+1}}{2},f^{i+1}\rangle(s)\dd s\right|\\
&\quad\leq \{\eta+C\|G\|_{L^{\infty}_{t,x}}\}\cdot\int_0^T\|\nu^{1/2}f^{i+1}(s)\|_{L^2}^2+C_{\eta}\int_0^T\|\nu^{-1/2}g(s)\|_{L^2}^2\dd s,
\end{aligned}
$$
where $\eta>0$ can be chosen arbitrarily small. From the boundary condition $\eqref{3.2.7}_2$, it holds that
$$\begin{aligned}
\f12\int_0^T|f^{i+1}(s)|_{L^2_-}^2\dd s&\leq \f12\int_0^T|P_{\gamma}f^i(s)+r(s)|_{L^2_-}^2\dd s\nonumber\\
&\leq \f12\left(1-\f2n+\f{3}{2n^2}\right)\int_0^T|f^{i}(s)|_{L^2_+}^2\dd s+C_{n}\int_0^T|r(s)|_{L^2_-}^2\dd s.
\end{aligned}
$$
Hence we obtain, for $\|G\|_{L^{\infty}_{t,x}}\ll 1$, that
\begin{align}
\f{1}{2}&\int_0^T|f^{i+1}(s)|_{L^2_+}^2+\int_0^T\lambda\|f^{i+1}(s)\|_{L^2}^2+\f34\|\nu^{1/2}f^{i+1}(s)\|_{L^{2}}^2\dd s\nonumber
\nonumber\\
&\leq \f{1}{2}(1-\f2n+\f{3}{2n^2})\int_0^T|f^i(s)|_{L^2_+}^2\dd s+C_{n}\int_0^T|r(s)|_{L^2_-}^2\dd s+C_n\int_0^T\|\nu^{-1/2}g(s)\|_{L^2}^2\dd s.\nonumber
\end{align}
To show the convergence of $f^i$, we consider the difference $f^{i+1}-f^i$. By a similar energy estimate, we have
\begin{align}\label{3.2.9}
\f{1}{2}&\int_0^T|[f^{i+1}-f^i](s)|_{L^2_+}^2+\int_0^T\lambda\|[f^{i+1}-f^i](s)\|_{L^2}^2+\f34\|\nu^{1/2}[f^{i+1}-f^i](s)\|_{L^{2}}^2\dd s\nonumber
\nonumber\\
&\leq \f{1}{2}(1-\f2n+\f{3}{2n^2})\int_0^T|[f^i-f^{i-1}](s)|_{L^2_+}^2\dd s\leq \cdots\leq \f{1}{2}(1-\f2n+\f{3}{2n^2})^i\int_0^T|f^1(s)|_{L^2_+}^2\dd s\nonumber\\
&\leq \f{1}{2}(1-\f2n+\f{3}{2n^2})^i\cdot\bigg\{\int_0^T|r(s)|^2_{L^{2}_-}+\|\nu^{-1/2}g(s)\|^2_{L^2}\dd s\bigg\}.
\end{align}
From \eqref{3.2.9}, we have, for $n$ suitably large so that $0<(1-\f2n+\f{3}{2n^2})<1$, that $\{f^i\}_{i=0}^{\infty}$ is a Cauchy sequence in $L^2$. Therefore,
for any $i\geq 0$, the following uniform-in-$i$ estimate holds
\begin{align}\label{3.2.10}
\int_0^T\|\nu^{1/2}f^{i}(s)\|_{L^2}^2+|f^i(s)|_{L^2_+}^2\dd s&\leq C_n \int_0^T|r(s)|^2_{L^{2}_-}+\|\nu^{-1/2}g(s)\|^2_{L^2}\dd s\nonumber\\
&\leq C_n|wr|_{L^\infty_tL^{\infty}_-}^2+C_n\|\nu^{-1}wg\|_{L^{\infty}_{t,x,v}}^2.
\end{align}
Next we establish uniform $L^{\infty}$-estimate. Note that \eqref{3.1.4} is also valid if replacing $1$ with $1-\f1n$ in the boundary condition and constants in \eqref{3.1.4} do not depend on $n$. Then utilizing \eqref{3.1.4}, we obtain that
\begin{align}\label{3.2.11}
&\|h^{i+1}\|_{L^\infty_{t,x,v}}+|h^{i+1}|_{L^{\infty}_tL^{\infty}_\pm}\nonumber\\
&\quad\leq \frac18 \max_{0\leq l\leq k} \|h^{i-l}\|_{L^\infty_{t,x,v}}
+C\Big\{|wr|_{L^\infty_tL^\infty_-}+\|\nu^{-1}wg\|_{L^\infty_{t,x,v}}\Big\}+C\max_{0\leq l\leq k} \|\nu^{1/2}f^{i-l}\|_{L^2([0,T];L^2)}\nonumber\\
&\quad\leq \frac18 \max_{0\leq l\leq k}\|h^{i-l}\|_{L^\infty_{t,x,v}}
+C_n\big\{|wr|_{L^{\infty}_t L^{\infty}_-}+\|\nu^{-1}wg\|_{L^{\infty}_{t,x,v}}\big\}.
\end{align}
Here we have used \eqref{3.2.10} in the last inequality. Applying \eqref{A.1} to \eqref{3.2.11} with
$$a_i=\|h^i\|_{L^\infty_{t,x,v}}+|h^i|_{L^\infty_tL^\infty_\pm}\quad\text{and}\quad D=C_n\big\{|wr|_{L^{\infty}_t L^{\infty}_-}+\|\nu^{-1}wg\|_{L^{\infty}_{t,x,v}}\big\},
$$
we have, for $i\geq k+1$, that
\begin{align}
\|h^{i}\|_{L^{\infty}_{t,x,v}}+|h^{i}|_{L^\infty_tL^{\infty}_\pm}&\leq \f18\max_{1\leq l\leq 2k}\|h^{l}\|_{L^{\infty}_{t,x,v}}+\f{8+k}{7}C_n\big\{|wr|_{L^{\infty}_tL^{\infty}_-}+\|\nu^{-1}wg\|_{L^{\infty}_{t,x,v}}\big\}\nonumber\\
&\leq C_n|wr|_{L^{\infty}_tL^\infty_-}+C_n\|\nu^{-1}wg\|_{L^{\infty}_{t,x,v}},\nonumber
\end{align}
where we have used \eqref{3.2.6-6} for $h=h^i, i=1,\cdots,2k$ in the last inequality. Therefore, we have, for each $i\geq 0$, that
\begin{align}
\|h^{i}\|_{L^{\infty}_{t,x,v}}+|h^i|_{L^{\infty}_tL^\infty_\pm}\leq C_n|wr|_{L^{\infty}_-}+C_n\|\nu^{-1}wg\|_{L^{\infty}_{t,x,v}}.\nonumber
\end{align}
Similarly, applying \eqref{3.1.4} to $h^{i+2}-h^{i+1}$, we get
\begin{align}\label{3.2.14}
&\|h^{i+2}-h^{i+1}\|_{L^{\infty}_{t,x,v}}+|h^{i+2}-h^{i+1}|_{L^\infty_tL^{\infty}_\pm}\nonumber\\
&\leq \f18\max_{0\leq l\leq k}\|h^{i+1-l}-h^{i-l}\|_{L^{\infty}_{t,x,v}}+C\max_{0\leq l\leq k}\{\|\nu^{1/2}[f^{i+1-l}-f^{i-l}]\|_{L^{2}(0,T;L^2)}\}\nonumber\\
&\leq \f18\max_{0\leq l\leq k}\|h^{i+1-l}-h^{i-l}\|_{L^{\infty}_{t,x,v}}+C_n\eta_n^{i-k}\sqrt{\int_0^T|r(s)|^2_{L^{2}_-}+\|\nu^{-1/2}g(s)\|^2_{L^2}\dd s}\nonumber\\
&\leq \f18\max_{0\leq l\leq k}\|h^{i+1-l}-h^{i-l}\|_{L^{\infty}_{t,x,v}}+
C_n\eta_{n}^{i+k+1}\{|wr|_{L^\infty_tL^{\infty}_-}+\|\nu^{-1}wg\|_{L^{\infty}_{t,x,v}}\},
\end{align}
where we have denoted $\eta_n:=\sqrt{1-\f2n+\f3{n^2}}$. Choosing $n$ suitably large so that $\f18\eta_n^{-k-1}\leq\f12$, and then utilizing \eqref{A.1-1} to \eqref{3.2.14}, we obtain, for $i\geq k+1$
\begin{align}
&\|h^{i+2}-h^{i+1}\|_{L^{\infty}_{t,x,v}}+|h^{i+2}-h^{i+1}|_{L^\infty_tL^{\infty}_\pm}\nonumber\\
&\quad\leq \left(\f18\right)^{[\f{i}{k+1}]}\max_{0\leq l\leq 2k+1}\|h^{l}\|_{L^{\infty}_{t,x,v}}+C_n\{|wr|_{L^\infty_tL^{\infty}_-}+\|v^{-1}wg\|_{L^{\infty}_{t,x,v}}\}\cdot\eta_n^i\nonumber\\
&\leq C_n\big\{\left(\f18\right)^{[\f{i}{k+1}]}+\eta_n^i\big\}\{|wr|_{L^{\infty}_tL^\infty_-}+\|v^{-1}wg\|_{L^{\infty}_{t,x,v}}\}.\nonumber
\end{align}
Hence $\{h^{i}\}_{i\geq0}$ is a Cauchy sequence in $L^{\infty}$. Denote $h(t,x,v)$ as the limit function. It is standard to check that $f:=\f{h}{w}$ solves \eqref{3.2.1}, for $n\geq n_0$. The periodicity of $f$ and $L^\infty$-estimate \eqref{3.2.6}  follows from the $L^\infty$-convergence. Therefore, the proof of Lemma \ref{lem3.2.1} is complete.
\end{proof}

The next step  is to show the solvability of \eqref{3.2.2}.

\begin{lemma}\label{lem3.2.2}
Let $\lambda>0$, $0\leq q<1$ and $\beta>3$. Under the same assumption as in Lemma \ref{lem3.2.1}, there exists a unique time-periodic solution $f^\lambda$ to \eqref{3.2.2}. Moreover, $f^\lambda$ satisfies
\begin{align}\label{3.2.17}
\|wf^\lambda\|_{L^\infty_{t,x,v}} +|wf^\lambda|_{L^\infty_tL^{\infty}_\pm} \leq C|wr|_{L^\infty_tL^\infty_-}+C\|\nu^{-1}wg\|_{L^\infty_{t,x,v}}.
\end{align}
\end{lemma}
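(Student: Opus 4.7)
The plan is to construct $f^\lambda$ as the $n\to\infty$ limit of the family $\{f^{n,\lambda}\}_{n\ge n_0}$ provided by Lemma \ref{lem3.2.1}; the whole task is to obtain uniform-in-$n$ bounds so that the limit exists in a topology strong enough to pass it to the boundary condition. For the $L^\infty$ side, I would run the bounce/iteration analysis behind Proposition \ref{prop3.1} for the equation \eqref{3.2.1} (here it is simpler because there is no $K_wh^i$ coupling, only the damping $\nu+\lambda$). The key observation is that Lemma \ref{lm3.2} is insensitive to the scalar factor $(1-1/n)\le 1$ multiplying $P_\gamma$ in the boundary condition: every diffuse reflection kernel is simply multiplied by $1-1/n$ at each bounce, which only improves the bounce sum. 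This yields, for any solution of \eqref{3.2.1},
\begin{align*}
\|wf^{n,\lambda}\|_{L^\infty_{t,x,v}}+|wf^{n,\lambda}|_{L^\infty_tL^\infty_\pm}\le C\bigl(|wr|_{L^\infty_tL^\infty_-}+\|\nu^{-1}wg\|_{L^\infty_{t,x,v}}+\|f^{n,\lambda}/w\|_{L^2(0,T;L^2)}\bigr),
\end{align*}
with $C$ independent of $n$.

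Next I would establish a uniform $L^2$ bound by testing \eqref{3.2.1} against $f^{n,\lambda}$ on $[0,T]\times(0,1)\times\mathbb{R}^3$. Time-periodicity kills $\partial_t$; integration by parts on the force term gives $-\tfrac12\langle Gv_1f^{n,\lambda},f^{n,\lambda}\rangle$; and the boundary contribution, expanded using $|P_\gamma f|_{L^2_-}\le|f|_{L^2_+}$, produces $\tfrac12(1-(1-1/n)^2)\int_0^T|f^{n,\lambda}|_{L^2_+}^2\,ds$ on the left, together with a cross term $(1-1/n)\int_0^T\langle P_\gamma f^{n,\lambda},r\rangle\,ds$ and $\tfrac12\int_0^T|r|^2_{L^2_-}\,ds$ on the right. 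Since the boundary coefficient degenerates like $1/n$, the cross term cannot be absorbed into boundary dissipation without introducing an $n$-dependent constant. The way around is a trace-type estimate derived from the equation itself (multiplying \eqref{3.2.1} by $\zeta(x)v_1 f^{n,\lambda}$ with $\zeta(0)=-1,\zeta(1)=1$, integrating by parts, and using the damping to control the transport term),
\begin{align*}
\int_0^T|f^{n,\lambda}|_{L^2_+}^2\,ds\le C\bigl(\|\nu^{1/2}f^{n,\lambda}\|_{L^2(0,T;L^2)}^2+\|\nu^{-1/2}g\|_{L^2(0,T;L^2)}^2\bigr),
\end{align*}
which transfers the boundary term back to volume norms. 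Substituting into the cross term via Young's inequality and then absorbing using the coercivity from $\lambda\|f^{n,\lambda}\|^2$ and $\|\nu^{1/2}f^{n,\lambda}\|^2$ yields $\|f^{n,\lambda}\|_{L^2(0,T;L^2)}\le C(|wr|_{L^\infty_tL^\infty_-}+\|\nu^{-1}wg\|_{L^\infty_{t,x,v}})$ uniformly in $n$. Combined with the first display this gives the uniform $L^\infty$ bound.

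Finally, to extract the limit I examine $D_n:=f^{n+1,\lambda}-f^{n,\lambda}$, which satisfies the homogeneous version of \eqref{3.2.1} with $g=0$ and boundary $D_n|_{\gamma_-}=(1-\tfrac{1}{n+1})P_\gamma D_n+\tfrac{1}{n(n+1)}P_\gamma f^{n,\lambda}$. Applying the uniform $L^\infty$ estimate to $D_n$, with $\tfrac{1}{n(n+1)}P_\gamma f^{n,\lambda}$ playing the role of $r$ and using $|wP_\gamma f^{n,\lambda}|_{L^\infty_-}\le C|wf^{n,\lambda}|_{L^\infty_+}$ already controlled uniformly, yields $\|wD_n\|_{L^\infty_{t,x,v}}+|wD_n|_{L^\infty_tL^\infty_\pm}\le C/(n(n+1))$. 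Thus $\{wf^{n,\lambda}\}$ is Cauchy in $L^\infty$; the limit $f^\lambda:=\lim_n f^{n,\lambda}$ is time-periodic, solves \eqref{3.2.2} in the mild sense (the boundary data $(1-1/n)P_\gamma f^{n,\lambda}+r$ converges uniformly to $P_\gamma f^\lambda+r$ since $\tfrac{1}{n}P_\gamma f^{n,\lambda}\to 0$ in $L^\infty$), and satisfies \eqref{3.2.17} by passing the uniform bound to the limit. Uniqueness is immediate from the a priori estimate applied to the difference of two solutions with identical data. The main obstacle is the $L^2$ step: the degeneracy $1-(1-1/n)^2\sim 1/n$ blocks a naive energy estimate, and the role of the penalty $\lambda>0$ is precisely to provide the volume coercivity which, combined with the trace estimate, closes the bound uniformly in $n$.
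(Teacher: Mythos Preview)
Your overall architecture—uniform-in-$n$ estimates on $f^{n,\lambda}$ followed by a Cauchy argument in $L^\infty$—matches the paper's. The point of departure is how you obtain the uniform $L^2$ bound. The paper does not attempt a trace estimate at all; instead, after the energy identity produces the problematic term $\tfrac{\eta}{2}\int_0^T|P_\gamma f^{n,\lambda}|_{L^2_+}^2$ on the right (from Young on the cross term with $r$), it simply bounds this crudely by $C\eta\,|wf^{n,\lambda}|_{L^\infty_tL^\infty_+}^2$ and then feeds the $L^\infty$ estimate \eqref{3.1.5} back in: since \eqref{3.1.5} controls $|wf^{n,\lambda}|_{L^\infty_+}$ by data plus $C\|\nu^{1/2}f^{n,\lambda}\|_{L^2(0,T;L^2)}$, one arrives at $\|wf^{n,\lambda}\|_{L^\infty}+|wf^{n,\lambda}|_{L^\infty_\pm}\le C\sqrt{\eta}\,|wf^{n,\lambda}|_{L^\infty_+}+C_\eta(\text{data})$, which closes for small $\eta$. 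This $L^2$--$L^\infty$ interplay is shorter than your route and avoids the trace machinery entirely; it is also the same mechanism the paper reuses for the Cauchy step on $f^{n_2,\lambda}-f^{n_1,\lambda}$.

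Your trace estimate, as written, has a gap. Multiplying \eqref{3.2.1} by $\zeta(x)v_1 f^{n,\lambda}$ and integrating by parts in $x$ produces a volume term $-\tfrac12\int\zeta'(x)\,v_1^2\,(f^{n,\lambda})^2$, and the damping contributes $\int(\nu+\lambda)\zeta v_1(f^{n,\lambda})^2$. Both carry weights of order $|v|^2\sim\nu^2$ at large velocity, so they are controlled only by $C\|\nu f^{n,\lambda}\|_{L^2}^2$, not by $\|\nu^{1/2}f^{n,\lambda}\|_{L^2}^2$ as you claim; the coercivity available from the energy estimate furnishes only the latter, so the loop does not close. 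This is repairable—replace $v_1$ by a bounded odd cutoff $\chi(v_1)$, and note that what you actually need to control is only the single moment $|P_\gamma f^{n,\lambda}|_{L^2_+}$, not the full trace $|f^{n,\lambda}|_{L^2_+}$—but the paper's interplay argument sidesteps the issue altogether and is the cleaner choice here.
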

\begin{proof}
We shall first establish the uniform-in-$n$ estimate on the solution $f^{n,\lambda}$ to \eqref{3.2.1} and then show $h^{n,\lambda}:=wf^{n,\lambda}$ is Cauchy in $L^{\infty}$. Taking inner product of \eqref{3.2.1} with $f^{n,\lambda}$ over $[0,T]\times (0,1)\times \mathbb{R}^3$, we have
\begin{align}
\int_0^T&\lambda\|f^{n,\lambda}(s)\|_{L^2}^2+\f34\|\nu^{1/2}f^{n,\lambda}(s)\|_{L^2}^2+\f12|f^{n,\lambda}(s)|_{L^2_+}^2\dd s-C\|G\|_{L^{\infty}_{t,x}}\cdot\int_0^T\|\nu^{1/2}f^{n,\lambda}(s)\|_{L^2}^2\dd s\nonumber\\
&\leq C\int_0^T\|\nu^{-1/2}g(s)\|_{L^2}^2\dd s+\f12\int_0^T\big|(1-\f1n)P_{\g}f^{n,\lambda}+r\big|_{L^2_-}^2\dd s\nonumber\\
&\leq C\int_0^T\|\nu^{-1/2}g(s)\|_{L^2}^2\dd s+\f{1+\eta}{2}\int_0^T|P_\g f^{n,\lambda}(s)|_{L^{2}_+}^2\dd s+C_\eta\int_0^T|r(s)|^2_{L^{2}_-}\dd s,\nonumber
\end{align}
which implies, for $\|G\|_{L^{\infty}_{t,x}}\ll 1$, that
\begin{align}\label{3.2.18}
\int_0^T&\lambda\|f^{n,\lambda}(s)\|_{L^2}^2+\f12\|\nu^{1/2}f^{n,\lambda}(s)\|_{L^2}^2+\f12|(I-P_\gamma)f^{n,\lambda}(s)|_{L^2_+}^2\dd s\nonumber\\
&\leq C\int_0^T\|\nu^{-1/2}g(s)\|_{L^2}^2\dd s+\f{\eta}{2}\int_0^T|P_\g f^{n,\lambda}(s)|_{L^{2}_+}^2\dd s+C_\eta\int_0^T|r(s)|^2_{L^{2}_-}\dd s\nonumber\\
&\leq C\eta\cdot|h^{n,\lambda}|^2_{L^\infty_tL^{\infty}_+}+C_\eta\|\nu^{-1}wg\|^2_{L^{\infty}_{t,x,v}}+C_\eta|wr|^2_{L^{\infty}_tL^\infty_-},
\end{align}
where $\eta$ can be taken arbitrarily small. Then applying $L^{\infty}$-estimate \eqref{3.1.5} to $h^{n,\lambda}:=wf^{n,\lambda}$ and using \eqref{3.2.18}, we have
\begin{align}\label{3.2.19}
\|h^{n,\lambda}\|_{L^{\infty}_{t,x,v}}+|h^{n,\lambda}|_{L^\infty_tL^{\infty}_\pm}&\leq C\|\nu^{-1}wg\|_{L^\infty_{t,x,v}}+C
|wr|_{L^\infty_tL^\infty_-}+C\|\nu^{1/2}f^{n,\lambda}\|_{L^{2}(0,T;L^2)}\nonumber\\
&\leq C\sqrt{\eta}\cdot|h^{n,\lambda}|_{L^\infty_tL^{\infty}_+}+C_{\eta}\|\nu^{-1}wg\|_{L^\infty_{t,x,v}}+C_\eta|wr|_{L^\infty_tL^\infty_-}\nonumber\\
&\leq C\|\nu^{-1}wg\|_{L^\infty_{t,x,v}}+C|wr|_{L^\infty_tL^\infty_-}.
\end{align}
Here we have taken $\eta>0$ suitably small in the last inequality of \eqref{3.2.19}. To show the convergence, we consider the difference: $h^{n_2,\lambda}-h^{n_1,\lambda}$. Note that $f^{n_2,\lambda}-f^{n_1,\lambda}:=w^{-1}\big(h^{n_2,\lambda}-h^{n_1,\lambda}\big)$ solves
\begin{equation}
\left\{\begin{aligned}
&\pa_t(f^{n_2,\lambda}-f^{n_1,\lambda})+v_1\pa_x(f^{n_2,\lambda}-f^{n_1,\lambda})\\
&\qquad+\sqrt{\mu}^{-1}G(t,x)\pa_{v_1}[\sqrt{\mu}(f^{n_2,\lambda}-f^{n_1,\lambda})]
+[\nu(v)+\lambda](f^{n_2,\lambda}-f^{n_1,\lambda})=0,\\
&(f^{n_2,\lambda}-f^{n_1,\lambda})|_{\gamma_-}=(1-\f{1}{n_2})P_{\g}(f^{n_2,\lambda}-f^{n_1,\lambda})+(\f1{n_1}-\f1{n_2})P_{\g}f^{n_1,\lambda}.\nonumber
\end{aligned}\right.
\end{equation}
Then a similar energy estimate shows that
\begin{align}
\int_0^T&\|\nu^{1/2}(f^{n_2,\lambda}-f^{n_1,\lambda})(s)\|_{L^2}^2\dd s\nonumber\\
 &\leq \eta|h^{n_2,\lambda}-h^{n_1,\lambda}|_{L^\infty_tL^{\infty}_+}^2+C_{\eta}\int_0^T\left|(\f1{n_2}-\f1{n_1})P_{\g}f^{n_1,\lambda}(s)\right|_{L^{2}_-}^2\dd s\nonumber\\
 &\leq \eta|h^{n_2,\lambda}-h^{n_1,\lambda}|_{L^{\infty}_tL^\infty_+}^2+C_{\eta}\bigg(\f{1}{n_1^2}+\f{1}{n_2^2}\bigg)\cdot
 \{\|\nu^{-1}wg\|^2_{L^\infty_{t,x,v}}+|wr|^2_{L^\infty_tL^\infty_-}\},\nonumber
\end{align}
where we have used \eqref{3.2.19} in the last inequality. Again, apply \eqref{3.1.5} to $h^{n_2}-h^{n_1}$, we have
\begin{align}
&\|h^{n_2,\lambda}-h^{n_1,\lambda}\|_{L^{\infty}_{t,x,v}}+|h^{n_2,\lambda}-h^{n_1,\lambda}|_{L^\infty_tL^{\infty}_\pm}\nonumber\\
&\leq C\bigg|w\left(\f1{n_2}-\f1{n_1}\right)P_{\g}f^{n_1,\lambda}\bigg|_{L^\infty_tL^{\infty}_-}+C\|\nu^{1/2}(f^{n_2,\lambda}-f^{n_1,\lambda})\|_{L^2(0,T;L^2)}\nonumber\\
&\leq C\eta\cdot|h^{n_2,\lambda}-h^{n_1,\lambda}|_{L^\infty_tL^{\infty}_+}+C_{\eta}\bigg(\f{1}{n_1}+\f{1}{n_2}\bigg)\cdot
\{\|\nu^{-1}wg\|_{L^\infty_{t,x,v}}+|wr|_{L^\infty_tL^\infty_-}\}\nonumber\\
&\leq C\bigg(\f{1}{n_1}+\f{1}{n_2}\bigg)\cdot
\{\|\nu^{-1}wg\|_{L^\infty_{t,x,v}}+|wr|_{L^\infty_tL^\infty_-}\},\nonumber
\end{align}
where we have taken $\eta>0$ suitably small in the last inequality. 
Hence $h^{n,\lambda}$ is Cauchy in $L^{\infty}$. Denote $h^\lambda(t,x,v)$ as the limit function. It is 
straightforward to check that $f^\lambda:=\f{h^\lambda}{w}$ solves \eqref{3.2.2}. Moreover, since for each $n$, $f^n$ is time-periodic with period $T$, then $f$ is also time-periodic with the same period. The $L^\infty$-estimate \eqref{3.2.17} is the consequence of $L^\infty$-convergence. This proves Lemma \ref{lem3.2.2}.
\end{proof}

\begin{lemma}\label{lem3.2.3}
Let $\beta>3$ and $0\leq q<1$. Under the same assumption as in Lemma \ref{lem3.2.1}, there exists a positive constant $\lambda_0>0$, such that, for any $\lambda\geq \lambda_0$, \eqref{3.2.3} admits a unique time-periodic solution $f^{\lambda}$ with period $T$. Moreover, the solution $f^{\lambda}$ satisfies
\begin{align}
\|wf^{\lambda}\|_{L^\infty_{t,x,v}} +|wf^{\lambda}|_{L^\infty_tL^\infty_\pm} \leq C_{\lambda_0}\big\{ |wr|_{L^\infty_-}+\|\nu^{-1}wg\|_{L^\infty_{t,x,v}} \big\}.\nonumber
\end{align}
\end{lemma}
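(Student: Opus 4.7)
The plan is to mimic the iteration philosophy of Lemma \ref{lem3.2.2}, but this time with the full gain operator $K$ placed on the right-hand side as part of the source. Specifically, I would set $f^{\lambda,0}\equiv 0$ and, for each $j\geq 0$, let $f^{\lambda,j+1}$ be the unique time-periodic solution produced by Lemma \ref{lem3.2.2} of
\begin{equation*}
\left\{
\begin{aligned}
&\pa_t f^{\lambda,j+1}+v_1\pa_x f^{\lambda,j+1}+\sqrt{\mu}^{-1}G(t,x)\pa_{v_1}(\sqrt\mu f^{\lambda,j+1})+[\nu(v)+\lambda]f^{\lambda,j+1}=Kf^{\lambda,j}+g,\\
&f^{\lambda,j+1}(t,x,v)|_{\g_-}=P_{\g}f^{\lambda,j+1}+r.
\end{aligned}
\right.
\end{equation*}
Lemma \ref{lem3.2.2} shows each $f^{\lambda,j+1}$ exists as a time-periodic function with $\|wf^{\lambda,j+1}\|_{L^\infty_{t,x,v}}<\infty$ controlled by $Kf^{\lambda,j}+g$ and $r$.

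First I would establish a uniform-in-$j$ $L^2$ bound. Testing the above equation against $f^{\lambda,j+1}$, integrating over one period $[0,T]\times(0,1)\times \R^3$, using the boundary decomposition exactly as in \eqref{3.2.18} and the smallness of $\|G\|_{L^\infty_{t,x}}$, one reaches
\begin{equation*}
\lambda\int_0^T\|f^{\lambda,j+1}\|_{L^2}^2\,ds+\f12\int_0^T\|\nu^{1/2}f^{\lambda,j+1}\|_{L^2}^2\,ds\leq |\langle Kf^{\lambda,j},f^{\lambda,j+1}\rangle|+C D^2,
\end{equation*}
where $D:=\|\nu^{-1}wg\|_{L^\infty_{t,x,v}}+|wr|_{L^\infty_tL^\infty_-}$ and the inner product on the right is over $[0,T]\times(0,1)\times \R^3$. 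Using the standard bound $\|Kf\|_{L^2}\leq C\|f\|_{L^2}$ together with Cauchy-Schwarz,
\begin{equation*}
|\langle Kf^{\lambda,j},f^{\lambda,j+1}\rangle|\leq \f{\lambda}{2}\|f^{\lambda,j+1}\|_{L^2(0,T;L^2)}^2+\f{C^2}{2\lambda}\|f^{\lambda,j}\|_{L^2(0,T;L^2)}^2,
\end{equation*}
so that choosing $\lambda_0$ so large that $C^2/\lambda_0^2\leq 1/4$ and iterating delivers the uniform bound $\|f^{\lambda,j}\|_{L^2(0,T;L^2)}\leq C\lambda_0^{-1/2}D$ for every $\lambda\geq\lambda_0$.

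Next I would upgrade to an $L^\infty$ bound by feeding this $L^2$ information into the a priori estimate \eqref{3.1.5} applied to $h^{\lambda,j+1}:=wf^{\lambda,j+1}$ with bulk source $Kf^{\lambda,j}+g$ and boundary source $r$; the main contribution $\|\nu^{-1}wKf^{\lambda,j}\|_{L^\infty}\leq C\|wf^{\lambda,j}\|_{L^\infty}$ is absorbed using the $L^2$-$L^\infty$ interplay and the uniform $L^2$ bound just obtained, exactly as in the closing step of Lemma \ref{lem3.2.2}. This produces $\|wf^{\lambda,j}\|_{L^\infty_{t,x,v}}+|wf^{\lambda,j}|_{L^\infty_tL^\infty_\pm}\leq C_{\lambda_0}D$ uniformly in $j$. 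To pass to the limit, the difference $f^{\lambda,j+1}-f^{\lambda,j}$ solves the same system with zero data and right-hand side $K(f^{\lambda,j}-f^{\lambda,j-1})$, so the same $L^2$ contraction gives $\|f^{\lambda,j+1}-f^{\lambda,j}\|_{L^2(0,T;L^2)}\leq \f12\|f^{\lambda,j}-f^{\lambda,j-1}\|_{L^2(0,T;L^2)}$; the $L^\infty$ interplay then transfers this to Cauchy convergence of $wf^{\lambda,j}$ in $L^\infty$. The limit $f^\lambda$ is the desired time-periodic solution and uniqueness is immediate from the same contraction applied to the difference of two solutions.

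The main obstacle, and the reason Lemma \ref{lem3.2.2} is set up as it is, is ensuring that the constants in the $L^\infty$ step are independent of $\lambda$, so that the $\lambda^{-1}$ smallness gained in the $L^2$ energy estimate is not cancelled during the $L^2$-$L^\infty$ transfer. This is precisely the role of having already proved \eqref{3.2.17} with a $\lambda$-independent constant; once that is granted, the entire argument is a clean concatenation of the $L^2$ contraction with Lemma \ref{lem3.2.2}, and the zero-mass condition \eqref{3.0.2} is notably not needed at this stage — the largeness of $\lambda$ alone supplies the coercivity, which is consistent with the outlined Step 3 reserving the use of \eqref{3.0.2} for the bootstrap down to $\lambda=0$.
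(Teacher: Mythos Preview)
Your approach is essentially that of the paper: the same iteration scheme, the same $L^2$ contraction driven by the largeness of $\lambda$, and the same $L^2$--$L^\infty$ transfer. Two small points where the paper's execution is cleaner than your sketch. First, the paper does the $L^2$ energy estimate directly on the differences $z^{n+1}=f^{\lambda,n+1}-f^{\lambda,n}$ rather than first seeking a uniform bound on $f^{\lambda,j+1}$; the advantage is that $z^{n+1}|_{\gamma_-}=P_\gamma z^{n+1}$ is homogeneous, so the $\tfrac{\eta}{2}|P_\gamma f|_{L^2_+}^2$ term from \eqref{3.2.18} simply does not arise. In your uniform-bound step that term is present and, as in \eqref{3.2.18}, would have to be controlled by $|wf^{\lambda,j+1}|_{L^\infty_+}$, which at that stage is only known to satisfy the non-uniform bound coming from \eqref{3.2.17} with source $Kf^{\lambda,j}+g$. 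Second, for the $L^\infty$ step the relevant estimate is the iteration estimate \eqref{3.1.4}, not \eqref{3.1.5}: the equation for $wf^{\lambda,j+1}$ carries $K_w(wf^{\lambda,j})$ on the right, and it is \eqref{3.1.4} that turns this into $\tfrac{1}{8}\max_{0\leq l\leq k}\|wf^{\lambda,j-l}\|_{L^\infty}+C\max_l\|f^{\lambda,j-l}\|_{L^2(0,T;L^2)}$, after which the uniform $L^2$ bound and the iteration lemma \eqref{A.1} close the argument. With these adjustments your proof coincides with the paper's.
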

\begin{proof}
We construct the solution in terms of the following iteration scheme:
\begin{equation}\left\{
\begin{aligned}
&\pa_tf^{n+1}+v_1\pa_x f^{n+1}+\sqrt{\mu}^{-1}G(t,x)\pa_{v_1}(\sqrt{\mu}f^{n+1})+\left[\nu(v)+\lambda\right]f^{n+1}=Kf^{n}+g,\nonumber\\
&f^{n+1}|_{\g_-}=P_{\g}f^{n+1}+r,\\
&f^0\equiv0.
\end{aligned}\right.
\end{equation}
By Lemma \ref{lem3.2.2}, $\{f^{n}\}_{n\geq1}$ is well-defined. To show the convergence, we consider the difference $z^{n+1}=f^{n+1}-f^n$. It is straightforward to verify that $z^{n+1}$ solves
\begin{equation}\label{3.2.21}
\left\{
\begin{aligned}
&\pa_tz^{n+1}+v_1\pa_x z^{n+1}+\sqrt{\mu}^{-1}G(t,x)\pa_{v_1}(\sqrt{\mu}z^{n+1})+\left[\nu(v)+\lambda\right]z^{n+1}=Kz^{n}+g,\\
&z^{n+1}|_{\g_-}=P_{\g}z^{n+1},\\
&z^1= f^1.
\end{aligned}\right.
\end{equation}
Then the same as before, multiplying $z^{n+1}$ on the both side of the first equation in \eqref{3.2.21}, we have
\begin{align}\label{3.2.22}
&\f{1}{2}\f{\dd}{\dd t}\|z^{n+1}\|_{L^2}^2+\f12|(I-P_\g)z^{n+1}|_{L^{2}_+}^2+\lambda\|z^{n+1}\|_{L^2}^2+\|\nu^{1/2}z^{n+1}\|_{L^2}^2\notag\\
&\qquad\qquad=
\langle\f{G(t,x)v_1}{2}z^{n+1}+Kz^{n},z^{n+1}\rangle.
\end{align}
By Cauchy-Schwarz, it holds that
$$\begin{aligned}
|\langle Kz^{n},z^{n+1}\rangle|&\leq \int |k(v,u)z^{n}(u)z^{n+1}(v)|\leq \sqrt{\int|k(v,u)||z^{n}(u)|^2}\cdot\sqrt{\int|k(v,u)||z^{n+1}(u)|^2}\\
& \leq \|z^n\|_{L^2}^2+\hat{C}_1\|z^{n+1}\|_{L^2}^2,
\end{aligned}
$$
for some constant $\hat{C}_1>0$. Recall the fact that
$$|\langle\f{G(t,x)v_1}{2}z^{n+1},z^{n+1}\rangle|\leq C\|G\|_{L^{\infty}_{t,x}}\cdot\|\nu^{1/2}z^{n+1}\|_{L^2}^2.
$$
Then if $\|G\|_{L^\infty_{t,x}}$ sufficiently small, we 
obtain by integrating \eqref{3.2.22} over $[0,T]$ that

$$
\begin{aligned}
(\lambda-\hat{C}_1)\int_0^T\|z^{n+1}(t)\|_{L^2}^2\dd t+\f12\int_0^T\|\nu^{1/2}z^{n+1}(t)\|_{L^2}^2\dd t\leq\int_0^T\|z^{n}(t)\|_{L^2}^2.
\end{aligned}
$$
By iterating over $n$, for $\lambda\geq \lambda_0\geq\hat{C}_1+2,$ we have
\begin{equation}\label{3.2.23}
\int_0^T\|z^{n+1}(t)\|_{L^2}^2\dd s\leq \f{1}{2}\int_0^T\|z^{n}(t)\|_{L^2}^2\dd s\leq \cdots\leq \left(\f12\right)^n\int_0^T\|z^1(t)\|_{L^2}^2\dd t.
\end{equation}
Applying \eqref{3.1.5} to $z^{n+1}$ , we have
\begin{equation}\label{3.2.24}
\begin{aligned}
\|wz^{n+1}\|_{L^{\infty}_{t,x,v}}+|wz^{n+1}|_{L^{\infty}_tL^\infty_{\pm}}&\leq \f{1}{8}\|wz^{n}\|_{L^{\infty}_{t,x,v}}+ C\|z^{n}\|_{L^{2}(0,T;L^2)}\\
&\leq\f{1}{8}\|wz^{n}\|_{L^{\infty}_{t,x,v}}+\left(\f12\right)^{n/2}\|z^1\|_{L^2(0,T;L^2)}\\
&\leq\f{1}{8}\|wz^{n}\|_{L^{\infty}_{t,x,v}}+C\left(\f12\right)^{n/2}\{\|\nu^{-1}wg\|_{L^\infty_{t,x,v}}+|wr|_{L^\infty_tL^\infty_-}\},
\end{aligned}
\end{equation}
where we have used \eqref{3.2.23}. From \eqref{3.2.24}, we conclude that $f^{n}$ is a Cauchy sequence. The solution $f^{\lambda}$ is obtained via taking limit $n\rightarrow\infty$. Then Lemma \ref{lem3.2.3}  follows.
\end{proof}

\noindent{\it Proof of Proposition \ref{prop3.1}:}  We denote $\mathcal{S}^{-1}_{\lambda}$ to be the solution operator of \eqref{3.2.3}, that means, $f=\CS^{-1}_{\lambda}g$ is the unique $L^{\infty}$-solution to \eqref{3.2.3}. By Lemma \ref{lem3.2.3}, $\CS^{-1}_{\lambda_0}$ exists for suitably large $\lambda_0.$ Our idea is to {\it extend} the existence of $\CS^{-1}_{0}$ by a bootstrap argument. The proof is divided into two steps.

\medskip
\noindent{\it Step 1.} Uniform-in-$\lambda$ estimate: Let $0\leq \lambda\leq \lambda_0$. We claim that for any $L^{\infty}$-solution $f^{\lambda}$ to \eqref{3.2.3}, it holds that
\begin{align}\label{3.2.25}
\|wf^{\lambda}\|_{L^{\infty}_{t,x,v}}+|wf^\lambda|_{L^{\infty}_tL^\infty_\pm}\leq C\|\nu^{-1}wg\|_{L^{\infty}_{t,x,v}}+C|wr|_{L^{\infty}_tL^\infty_-},
\end{align}
where the constant $C>0$ is independent of $\lambda.$ Indeed, taking the inner product of \eqref{3.2.3} with $f^{\lambda}$ and using coercivity estimate \eqref{c} and the Cauchy-Schwarz, we have
\begin{align}\label{3.2.26}
\lambda\int_0^T&\|f^{\lambda}(s)\|_{L^2}^2\dd s+\f{c_0}{2}\int_0^T\|\nu^{1/2}(I-P)f^{\lambda}(s)\|_{L^2}^2\dd s+
\f12\int_0^T|(I-P_{\g})f^{\lambda}(s)|_{L^2_+}\dd s\nonumber\\
\leq &\eta\int_0^T|P_{\g}f^{\lambda}(s)|_{L^{2}_+}^2\dd s+C_\eta\int_0^T|r(s)|_{L^{2}_-}^2\dd s\nonumber\\
&+C\{\|G\|_{L^{\infty}_{t,x}}+\eta\}\cdot\int_0^T\|\nu^{1/2}f^{\lambda}(s)\|_{L^2}^2\dd s+C_\eta\int_0^T\|\nu^{-1/2}g(s)\|_{L^2}^2\dd s\nonumber\\
\leq& C\eta\cdot|wf^\lambda|_{L^{\infty}_tL^\infty_+}^2+C_\eta\int_0^T|r(s)|_{L^{2}_-}^2\dd s\nonumber\\
&+C\{\|G\|_{L^{\infty}_{t,x}}+\eta\}\cdot\int_0^T\|\nu^{1/2}f^{\lambda}(s)\|_{L^2}^2\dd s+C_\eta\int_0^T\|\nu^{-1/2}g(s)\|_{L^2}^2\dd s.
\end{align}
Here the projection $P$ is defined in \eqref{P} and the constant $\eta>0$ can be chosen arbitrarily small. Now we estimate the fluid part $Pf^{\lambda}$.
Multiplying $\sqrt{\mu}$ on both sides of \eqref{3.2.3} and integrating over $(0,1)\times \mathbb{R}^3$, we have
\begin{align}
\f{\dd}{\dd t}\langle f^\lambda, \sqrt{\mu}\rangle+\lambda\langle f^\lambda, \sqrt{\mu}\rangle=\langle g, \sqrt{\mu}\rangle+\langle r,\sqrt{\mu}\rangle_{\g_-}=0,\nonumber
\end{align}
which implies that
$$\langle f^{\lambda}(t),\sqrt{\mu}\rangle=e^{-\lambda t}\langle f^{\lambda}(0),\sqrt{\mu}\rangle.
$$
Since $f^{\lambda}(t)$ is periodic in $t$, then $\langle f^{\lambda}(t),\sqrt{\mu}\rangle\equiv0$. Then similar as in \cite[Lemma 13]{CKL}, one can find a function $\mathfrak{e}_{f^{\lambda}}(t)\lesssim \|f^\lambda(t)\|_{L^2}^2$, so that
\begin{align}\label{3.2.27}
&\int_0^t\|\nu^{1/2}Pf^{\lambda}(s)\|_{L^2}^2\dd s\leq
\bigg(\mathfrak{e}_{f^{\lambda}}(t)-\mathfrak{e}_{f^{\lambda}}(0)\bigg)+C\int_0^t\|\nu^{1/2}(I-P)f^{\lambda}(s)\|_{L^2}^2\dd s\nonumber\\
&\quad+C\|G\|_{L^{\infty}_{t,x}}\cdot\int_0^t\|\nu^{1/2}f^\lambda(s)\|_{L^2}^2\dd s+C\int_0^t\|\nu^{-1/2}g(s)\|_{L^2}^2+|r(s)|^2_{L^2_-}+|(I-P_{\g})f(s)|_{L^2_+}^2\dd s.
\end{align}
In particular, taking $t=T$ in \eqref{3.2.27} and utilizing the periodicity of $f^{\lambda}$, we obtain
\begin{align}
&\int_0^T\|\nu^{1/2}Pf^{\lambda}(s)\|_{L^2}^2\dd s\leq C\int_0^T\|\nu^{1/2}(I-P)f^{\lambda}(s)\|_{L^2}^2\dd s+C\int_0^T\|\nu^{-1/2}g(s)\|_{L^2}^2\dd s\nonumber\\
&\qquad+C\|G\|_{L^{\infty}_{t,x}}\cdot\int_0^T\|\nu^{1/2}f^\lambda(s)\|_{L^2}^2\dd s+C\int_0^T|r(s)|_{L^2_-}^2\dd s+C\int_0^T|(I-P_{\g})f^\lambda(s)|_{L^2_+}^2\dd s.\nonumber
\end{align}
Combining this with \eqref{3.2.26}, we have, for sufficiently small $\|G\|_{L^{\infty}_{t,x}}$, that
\begin{align}\label{3.2.28}
&\int_0^T\|\nu^{1/2}f^{\lambda}(s)\|_{L^2}^2\dd s+\int_0^T|(I-P_\g)f^\lambda(s)|_{L^2_+}^2\dd s\nonumber\\
&\qquad\leq C\eta\cdot|wf^\lambda|_{L^{\infty}_tL^\infty_\pm}^2+C_\eta\int_0^T|r(s)|_{L^{2}_-}^2+\|\nu^{-1/2}g(s)\|_{L^2}^2\dd s\nonumber\\
&\qquad\leq  C\eta\cdot|wf^\lambda|_{L^\infty_{t}L^{\infty}_\pm}^2+C_\eta|wr|_{L^{\infty}_tL^{\infty}_{-}}^2+C_\eta\|\nu^{-1}wg\|_{L^{\infty}_{t,x,v}}^2.
\end{align}
Applying $L^{\infty}$-estimates \eqref{3.1.5} to $h^{\lambda}=wf^{\lambda}$, we have
\begin{align}
\|h^\lambda\|_{L^\infty_{t,x,v}}+|h^\lambda|_{L^{\infty}_tL^\infty_{\pm}}&\leq C \|\nu^{-1}wg\|_{L^\infty_{t,x,v}}+C|wr|_{L^\infty_tL^\infty_-}+C\|f^\lambda\|_{L^2(0,T;L^2)}\nonumber\\
&\leq C\sqrt{\eta}\cdot|h^\lambda|_{L^{\infty}_tL^\infty_\pm}+ C_\eta\|\nu^{-1}wg\|_{L^\infty_{t,x,v}}+C_\eta|wr|_{L^\infty_tL^\infty_-}\nonumber\\
&\leq C\|\nu^{-1}wg\|_{L^\infty_{t,x,v}}+C|wr|_{L^\infty_tL^\infty_-}.\nonumber
\end{align}
Here we have used \eqref{3.2.28} in the second inequality and taken $\eta>0$ suitably small in the last inequality. This shows the claim \eqref{3.2.25}.

\medskip
\noindent{\it Step 2.} In this step, we shall prove the existence of solution $f^\lambda$ to \eqref{3.2.3}, for $\lambda>0$ sufficiently close to $\lambda_0.$  Firstly, we define an Banach space
\begin{align}\nonumber
\mathbf{X}:=\Big\{f=f(t,x,v) :&\ f(t+T)=f(t),\ wf\in L^\infty_{t,x,v}, \ wf\in L^\infty_tL^\infty_\pm,\nonumber\\
 &\  \mbox{and} \  f|_{\gamma_-}=P_\gamma f+r \Big\},\nonumber
\end{align}
and define the operator:
\begin{align}\nonumber
T_\lambda f=\mathcal{S}_{\lambda_0}^{-1}[(\lambda_0-\lambda) f+g],\quad\text{for}\quad 0\leq \lambda\leq\lambda_0.
\end{align}
Notice that $S_{\lambda_0}^{-1}$ is well-defined by Lemma \ref{lem3.2.3}. Now for any $f_1, f_2\in \mathbf{X}$, by using the uniform estimates \eqref{3.2.25}, we have that
\begin{align}
\|w(T_\lambda f_1-T_\lambda f_2)\|_{L^\infty_{t,x,v}}&= \left\|w\{\mathcal{S}_{\lambda_0}^{-1}[(\lambda_0-\lambda) f_1+g]-\mathcal{S}_{\lambda_0}^{-1}[(\lambda_0-\lambda)f_2+g]\} \right\|_{L^\infty_{t,x,v}}\nonumber\\
&\leq (\lambda_0-\lambda)\| w\CS_{\lambda_0}^{-1}(f_1-f_2)\|_{L^\infty_{t,x,v}}\nonumber\\
&\leq C(\lambda_0-\lambda)\| w(f_1-f_2)\|_{L^\infty_{t,x,v}}.\nonumber
\end{align}
Here the universal constant $C>0$ is independent of $\lambda.$ Taking $0<\lambda_1<\lambda_0$ to be sufficiently close to $\lambda_0$ so that $C(\lambda_0-\lambda_1)\leq \frac12$, then $T_\lambda : \mathbf{X}\rightarrow \mathbf{X}$ is a contraction mapping for $\lambda\in[\lambda_1,\lambda_0]$. Thus $T_\lambda$ has a fixed point, i.e.,  $\exists f^\lambda\in \mathbf{X}$ so that
\begin{equation}\nonumber
f^\lambda=T_\lambda f^\lambda=\mathcal{S}_{\lambda_0}^{-1} \Big[(\lambda_0-\lambda )f^\lambda+g\Big],
\end{equation}
which yields immediately that
\begin{align}\nonumber
\mathcal{S}_\lambda f^\lambda=\pa_tf^\lambda+v_1\pa_x f^\lambda+\sqrt{\mu}^{-1}G(t,x)\pa_{v_1}(\sqrt{\mu}f^\lambda)+\left[\nu(v)+\lambda\right]f^\lambda-Kf^{\lambda}=g.
\end{align}
Therefore we have obtained the existence of $\mathcal{S}_\lambda^{-1}$ for $\lambda\in[\lambda_1,\lambda_0]$. \\

\medskip
\noindent{\it Step 3.} Next we define
\begin{align}\nonumber
T_{\lambda,\lambda_1}f=\mathcal{S}_{\lambda_1}^{-1}\Big[(\lambda_1-\lambda) f+g\Big].
\end{align}
Notice that by the uniform-in-$\lambda$ estimates in \eqref{3.2.25}, the estimates for $\mathcal{S}_{\lambda_1}^{-1}$ are independent of $\lambda_1$. By similar arguments, we can prove $T_{\lambda,\lambda_1} : \mathbf{X}\rightarrow \mathbf{X}$ is a contraction mapping  for $\lambda\in[2\lambda_1-\lambda_0,\lambda_1]$.  Then we obtain the exitence of operator $\mathcal{S}_{2\lambda_1-\lambda_0}^{-1}$.  Step by step, we can finally obtain the existence of operator $\mathcal{S}_0^{-1}$, and the solution $f:=\mathcal{S}_0^{-1}g$ satisfies the estimates in \eqref{3.2.25}. Therefore we complete the proof of Proposition \ref{prop3.1}. \qed

\subsection{Proof of Theorem \ref{thm1.2}}
The solution is constructed via the following iteration scheme:
\begin{align}
\begin{cases}
\pa_t f^{j+1}+v_1\pa_x f^{j+1}+\f{1}{\sqrt{\mu}}G(t,x)\pa_{v_1}({\sqrt{\mu}f^{j+1}})+Lf^{j+1}=\Gamma(f^j,f^j)+G(t,x)v_1\sqrt{\mu},\\[1.5mm]
f^{j+1}|_{\gamma_-}=P_\gamma f^{j+1}+\frac{\sqrt{2\pi}(\mu_{w}-\mu)}{\sqrt{\mu}}\int_{v'\cdot n(x)>0}f^j\sqrt{\mu} \{v'\cdot n(x)\}\dd v'+\f{\mu_w-\mu}{\sqrt{\mu}} ,\nonumber
\end{cases}
\end{align}
for $j=0,1,2\cdots$ with $f^0\equiv0$. A direct calculation shows that
\begin{align}
\int_0^1\int_{\mathbb{R}^3}G(t,x)v_1\sqrt{\mu}\dd v\dd x=\int_0^1\int_{\mathbb{R}^3}\Gamma(f^j,f^j)\sqrt{\mu(v)} \dd v\dd x= \int_{v_1\gtrless0} [\mu_w(t,v)-\mu(v)] v_1 \dd v =0,\nonumber
\end{align}
and
\begin{align}
\|wGv_1\sqrt{\mu}\|_{L^\infty_{t,x,v}}+\left| \sqrt{2\pi}w\f{\mu_w-\mu}{\sqrt{\mu}}\bigg\{1+\int_{v'\cdot n(x)>0} f^j\sqrt{\mu} \{v'\cdot n(x)\} \dd v'\bigg\} \right|_{L^\infty_\pm}\leq C\delta+C\delta |f^j|_{L^\infty_+}.\nonumber
\end{align}
Then applying Proposition \ref{prop3.1} to $f^{j+1}$ with
$g=\Gamma(f^j,f^j)+Gv_1\sqrt{\mu}$ and
$$r= \sqrt{2\pi}\frac{\mu_w-\mu}{\sqrt{\mu}}\bigg\{1+\int_{v'\cdot n(x)>0} f^j\sqrt{\mu} \{v'\cdot n(x)\}\dd v'\bigg\},
$$
and using \eqref{g}, we have
\begin{align}
\|wf^{j+1}\|_{L^\infty_{t,x,v}}+|wf^{j+1}|_{L^\infty_tL^\infty_\pm}\leq C\delta+C\|wf^j\|_{L^{\infty}_{t,x,v}}^2+ C\delta|wf^j|_{L^{\infty}_tL^\infty_+}.\nonumber
\end{align}
Hence it follows from a standard induction argument that
\begin{equation}\label{3.3.7}
\|wf^{j}\|_{L^\infty_{t,x,v}}+|wf^{j}|_{L^\infty_tL^\infty_\pm}\leq 2C\delta,\quad\mbox{for} \  j=1,2,\cdots,
\end{equation}
provided that $\delta>0$ is suitably small. For the convergence of the sequence $f^j$, we consider the difference $g^{j+1}:=f^{j+1}-f^j$. Since $g^{j+1}$ solves
\begin{align}
\pa_tg^{j+1}+v_1 \pa_xg^{j+1}+\f1{\sqrt{\mu}}G(t,x)\cdot\pa_{v_1}(\sqrt{\mu}g^{j+1})+Lg^{j+1}
=\Gamma(g^j,f^{j})+\Gamma(f^{j-1},g^j),\nonumber
\end{align}
with the boundary condition
\begin{align}
g^{j+1}|_{\gamma_-}=P_\gamma g^{j+1}+\frac{\sqrt{2\pi}(\mu_w-\mu)}{\sqrt{\mu}}\int_{v'\cdot n(x)>0}g^j\sqrt{\mu} \{v'\cdot n(x)\} \dd v',\nonumber
\end{align}
we again apply \eqref{3.0.3} to $g^{j+1}$ and get
\begin{align}
&\|wg^{j+1}\|_{L^\infty_{t,x,v}}+|wg^{j+1}|_{L^\infty_tL^\infty_\pm}\nonumber\\
&\qquad\leq C\{\delta+\|wf^{j}\|_{L^\infty_{t,x,v}}+\|wf^{j-1}\|_{L^\infty_{t,x,v}}\}\times \big\{\|wg^{j}\|_{L^\infty_{t,x,v}}+|wg^j|_{L^\infty_tL^\infty_+}\big\}\nonumber\\
&\qquad\leq C\delta\{\|wg^j\|_{L^\infty_{t,x,v}}+|wg^j|_{L^\infty_tL^\infty_+}\},\nonumber
\end{align}
where we have used \eqref{3.3.7} in the last inequality. By taking $\delta$ suitably small, we conclude that $wf^j$ is a Cauchy sequence in $L^\infty$. The time-periodic solution is obtained by taking the limit $j\rightarrow \infty$. The $L^\infty$-estimate \eqref{3.0.5} is the consequence of $L^\infty$-convergence. The uniqueness is standard. Therefore we have completed the proof of Theorem \ref{thm1.2}. \qed

\section{Non-negativity}\label{sec4}
In this section, we show that the solution obtained by Theorem \ref{thm1.2} is nonnegative. The strategy is to show that it is a large time limit of the solution of the Cauchy problem \eqref{1.2.2}, \eqref{1.2.3} with the initial data $F(0,x,v)=F_0(x,v)$. We introduce the perturbation:
$$F(t,x,v)=F^{\text{per}}(t,x,v)+\sqrt{\mu}f(t,x,v).
$$
The equation of $f$ reads as
\begin{align}\label{4.1}
\pa_tf+v_1\pa_xf+\f1{\sqrt{\mu}}G(t,x)\pa_{v_1}(\sqrt{\mu}f)+Lf=\Gamma(f^{\text{per}}+f,f)+\Gamma(f,f^{\text{per}}),
\end{align}
with boundary condition:
\begin{align}\label{4.2}
f|_{\g_-}=P_\g f+\frac{\sqrt{2\pi}(\mu_{w}-\mu)}{\sqrt{\mu}}\int_{v'\cdot n(x)>0}f\sqrt{\mu} \{v'\cdot n(x)\}\dd v',
\end{align}
and initial data
\begin{align}\label{4.2-1}
f(t,x,v)|_{t=0}=f_0(x,v):=\f{F(0,x,v)-F^{\text{per}}(0,x,v)}{\sqrt{\mu}}.
\end{align}
As before, we first consider the linear problem
\begin{equation}\label{4.3}\left\{
\begin{aligned}
&\pa_tf+v_1\pa_xf+\f1{\sqrt{\mu}}G(t,x)\pa_{v_1}(\sqrt{\mu}f)+Lf=g,\\
&f|_{\g_-}=P_\g f+r,\\
&f(t,x,v)|_{t=0}=f_0(x,v).
\end{aligned}\right.
\end{equation}

\begin{proposition}\label{prop4.1}
Let $\beta>3$ and $0\leq q<1$. There exists constant $\lambda_1>0$, such that if
$$\|X_w-1\|_{C^2}\ll 1,\quad\|wf_0\|_{L^{\infty}}+\sup_{s\geq 0}|wr(s)|_{L^{\infty}_-}+\sup_{s\geq0}\|\nu^{-1}wg(s)\|_{L^{\infty}}<\infty,$$
\begin{align}\label{4.4}
P_{\g}r\equiv0,\quad\text{and}\quad\int_0^1\int_{\mathbb{R}^3} f_0(x,v)\sqrt{\mu(v)}\dd x\dd v=\int_0^1\int_{\mathbb{R}^3} g(t,x,v)\sqrt{\mu(v)}\dd x\dd v=0,
\end{align}
then the linear IBVP problem \eqref{4.3} admits a unique solution $f(t,x,v)$ which satisfies, for any $t>0,$ that
\begin{align}\label{4.5}
&\sup_{0\leq s \leq t}\{\|we^{\lambda_1s}f(s)\|_{L^{\infty}}+|we^{\lambda_1s}f(s)|_{L^{\infty}_\pm}\}\nonumber\\
&\qquad\leq C\|wf_0\|_{L^{\infty}}+C\sup_{0\leq s\leq t }\{\|\nu^{-1}we^{\lambda_1 s}g(s)\|_{L^{\infty}}+|we^{\lambda_1 s}r(s)|_{L^{\infty}_-}\}.
\end{align}
Here the positive constant $C$ is independent of $t>0$.
\end{proposition}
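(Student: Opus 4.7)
The plan is to construct $f$ by a contraction iteration and to derive the exponentially decaying a priori estimate \eqref{4.5} by combining a weighted $L^2$ energy identity with the $L^2$--$L^\infty$ interplay already exploited in Section~3. The novelty relative to Proposition~\ref{prop3.1} is only the weight $e^{\lambda_1 s}$ on the left-hand side; provided $\lambda_1$ is chosen strictly smaller than the spectral gap $c_0$ of $L$ and than the effective collision frequency (so that $\tilde{\nu}-\lambda_1\geq \nu_0/2$ when $\|G\|_{L^\infty_{t,x}}\lesssim\delta$ is small), the multiplication by $e^{\lambda_1 s}$ is harmless along characteristics but converts bounds into exponential decay.

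\textbf{Weighted $L^2$ step.} Setting $\tilde f(s):=e^{\lambda_1 s}f(s)$, $\tilde g:=e^{\lambda_1 s}g$, $\tilde r:=e^{\lambda_1 s}r$, the function $\tilde f$ satisfies the same equation and boundary condition with an additional lower-order term $-\lambda_1\tilde f$ on the left. Testing against $\tilde f$, using the coercivity $\langle Lf,f\rangle\geq c_0\|\nu^{1/2}(I-P)f\|_{L^2}^2$, the smallness of $\|G\|_{L^{\infty}_{t,x}}$ to absorb $\langle Gv_1\tilde f/2,\tilde f\rangle$, and the diffuse-reflection inequality $\tfrac12|\tilde f|_{L^2_-}^2\leq \tfrac{1+\eta}{2}|P_\gamma\tilde f|_{L^2_+}^2+C_\eta|\tilde r|_{L^2_-}^2$ at the boundary, one obtains microscopic-only control. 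The missing macroscopic piece $\|\nu^{1/2}P\tilde f\|_{L^2}^2$ is recovered exactly as in \eqref{3.2.27}: the zero-mass condition \eqref{4.4} propagates in time because $\tfrac{d}{ds}\langle f,\sqrt{\mu}\rangle=\langle g,\sqrt{\mu}\rangle+\langle r,\sqrt{\mu}\rangle_{\gamma_-}=0$, so the ellipticity functional $\mathfrak{e}_{\tilde f}\lesssim\|\tilde f\|_{L^2}^2$ is available and absorbs the hydrodynamic part. Choosing $\lambda_1$ small relative to $c_0$ then absorbs $\lambda_1\int_0^t\|\tilde f\|_{L^2}^2\,ds$ into the dissipation and yields
\begin{equation*}
\sup_{0\leq s\leq t}\|\tilde f(s)\|_{L^2}^2+\int_0^t\|\nu^{1/2}\tilde f(s)\|_{L^2}^2\,ds\leq C\|wf_0\|_{L^\infty}^2+C\sup_{0\leq s\leq t}\Big\{\|\nu^{-1}w\tilde g(s)\|_{L^\infty}^2+|w\tilde r(s)|_{L^\infty_-}^2\Big\}.
\end{equation*}

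\textbf{Weighted $L^\infty$ step and existence.} Writing the mild formulation for $\tilde h:=e^{\lambda_1 s}\,wf$ along the characteristics of \eqref{2.1.1}, each exponential factor $e^{-\int_\tau^t\tilde{\nu}\,d\tau'}$ in Lemma~\ref{lm3.1} (with the penalty $\lambda$ there effectively equal to $-\lambda_1$) is replaced by $e^{-\int_\tau^t(\tilde{\nu}-\lambda_1)\,d\tau'}$; by the choice of $\lambda_1$ these factors still provide decay with rate $\nu_0/2$ along characteristics. Repeating the double-bounce iteration of Proposition~\ref{prop3.1}---splitting $K$ into large-velocity and small-velocity regions, invoking Lemma~\ref{lm3.2} to suppress the repeated-bounce integrals, and using the change of variable $v_{l,1}\mapsto X_{cl}(\tau)$, whose Jacobian is bounded below by \eqref{3.1.13-2}, to convert one piece into the $L^2$ norm of $\tilde f$---gives
\begin{equation*}
\sup_{0\leq s\leq t}\Big[\|\tilde h(s)\|_{L^\infty}+|\tilde h(s)|_{L^\infty_\pm}\Big]\leq C\|wf_0\|_{L^\infty}+C\sup_{0\leq s\leq t}\Big[\|\nu^{-1}w\tilde g(s)\|_{L^\infty}+|w\tilde r(s)|_{L^\infty_-}\Big]+C\sup_{0\leq s\leq t}\|\tilde f(s)\|_{L^2},
\end{equation*}
and plugging in the $L^2$ bound from the previous step produces \eqref{4.5}. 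Existence itself is then obtained by the same iteration/bootstrap scheme as in Section~3.2---introduce a penalty $\lambda$, build the solution by contraction for $\lambda=\lambda_0$ large, and descend to $\lambda=0$ using the a priori bound just established---with only trivial changes replacing ``time-periodic'' by ``initial data at $t=0$'' in the reasoning.

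\textbf{Main obstacle.} The delicate point is, as in Section~3, the macroscopic control: the coercivity of $L$ only sees $(I-P)\tilde f$, whereas $P\tilde f$ must be captured via the propagation of the zero-mass condition and the ellipticity functional $\mathfrak{e}_{\tilde f}$. One must verify that in the weighted setting the extra $-\lambda_1\tilde f$ term and the moving-boundary drift $G$ do not destroy that bound, and that the two smallness parameters $\delta$ and $\lambda_1$ can be tuned compatibly so that both the $L^2$ absorption and the $L^\infty$-interplay close simultaneously. This is precisely where the hypothesis $|X_w-1|_{C^2}\ll 1$ is used in this section, and it fixes the admissible decay rate $\lambda_1>0$.
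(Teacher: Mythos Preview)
Your strategy---weight by $e^{\lambda_1 s}$, run the $L^2$ energy estimate with zero-mass/macroscopic control, then close via the $L^2$--$L^\infty$ interplay---is exactly the paper's approach. Two technical points are glossed over in your sketch, however, and both matter for getting a constant \emph{independent of $t$}.

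\textbf{The two-rate trick in the $L^2$ step.} Your displayed $L^2$ bound places $\sup_{0\leq s\leq t}\{\|\nu^{-1}w\tilde g\|_{L^\infty}^2+|w\tilde r|_{L^\infty_-}^2\}$ on the right. But the energy identity naturally produces the \emph{time integral} $\int_0^t(\|\nu^{-1/2}\tilde g\|_{L^2}^2+|\tilde r|_{L^2_-}^2)\,ds$, which cannot be bounded by the sup with a $t$-independent constant. The paper first proves the unweighted estimate \eqref{4.12}, then applies it at a rate $\lambda>\lambda_1$ and uses
\[
\int_0^t e^{-2\lambda(t-s)}\|g(s)\|^2\,ds\leq \Big(\sup_{0\leq s\leq t}e^{2\lambda_1 s}\|g(s)\|^2\Big)\int_0^t e^{-2(\lambda-\lambda_1)(t-s)-2\lambda_1 t}\,ds\leq \frac{e^{-2\lambda_1 t}}{2(\lambda-\lambda_1)}\sup_{0\leq s\leq t}e^{2\lambda_1 s}\|g(s)\|^2
\]
to reach \eqref{4.15}. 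You need this (or an equivalent device) for your stated bound to hold.

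\textbf{The time-interval iteration in the $L^\infty$ step.} You invoke ``the double-bounce iteration of Proposition~\ref{prop3.1}'', but that proof relies on periodicity: one takes $s=-nT$ and works on a single period. For the IBVP the mild-formulation/Lemma~\ref{lm3.2} machinery only yields a bound on intervals $[s,s+T_0]$ of \emph{fixed} length, with constants depending on $T_0$. To get a bound uniform over $[0,t]$ for arbitrary $t$, the paper carries out an explicit induction over the blocks $[0,T_0],[T_0,2T_0],\ldots$ (see \eqref{4.13}--\eqref{4.14}), using the contraction factor $e^{-\nu_0 T_0/2}$ from one block to the next to sum the geometric series. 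Your sketch does not contain this step; simply setting $s=0$ in the mild formulation would force $k\sim t^{5/4}$ and blow up the constants. Once you insert this iteration, your weighted-$\tilde h$ formulation (effectively $\lambda=-\lambda_1$ in Lemma~\ref{lm3.1}) and the paper's ``prove unweighted \eqref{4.12-1}, then apply to $e^{\lambda_1 t}f$'' are equivalent.
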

\begin{proof}
The proof of existence of the solution is standard, for instance, see \cite[Proposition 3.8]{EGKM2}. We only establish the decay estimates \eqref{4.5}. The proof is divided into several steps:

\medskip
\noindent{\it Step 1.} $L^2$-estimate. Taking the inner product of \eqref{4.3} with $f$, we have
\begin{align}\label{4.6}
\f12\f{\dd}{\dd t}\|f(t)\|_{L^2}^2+\f12|f|_{L^2_+}^2+\langle \sqrt{\mu}^{-1}G\pa_{v_1}(\sqrt{\mu}f),f\rangle+\langle Lf,f\rangle=\f12|f|^2_{L^2_-}+\langle g,f\rangle.
\end{align}
The same as before, we have
\begin{align}\label{4.7}
\f12|f|_{L^2_+}^2-\f12|f|_{L^2_-}^2&=\f12|P_\g f|_{L^2_+}^2+\f{1}{2}|(I-P_\g)f|_{L^2_+}^2-\f12|P_{\g}f|_{L^2_-}^2-\f12|r|^2_{L^2_-}\nonumber\\
&=\f{1}{2}|(I-P_\g)f|_{L^2_+}^2-\f12|r|^2_{L^2_-}.
\end{align}
By the coercivity estimate, it holds that
\begin{align}\label{4.8}
\langle Lf,f\rangle\geq c_0\|\nu^{1/2}(I-P)f\|_{L^2}^2.
\end{align}
A direct computation shows that
\begin{align}\label{4.9}
|\langle \sqrt{\mu}^{-1}G\pa_{v_1}(\sqrt{\mu}f),f\rangle|+|\langle g,f\rangle|&=\f12|\langle Gv_1f,f\rangle|+|\langle g,f\rangle|\nonumber\\
&\leq C\{\|G\|_{L^{\infty}_{t,x}}+\eta\}\cdot\|\nu^{1/2}f\|_{L^2}^2+C_{\eta}\|\nu^{-1/2}g\|_{L^2}^2.
\end{align}
Here the constant $\eta>0$ can be chosen arbitrarily small. For the estimates on the fluid part $Pf$, we multiply $\sqrt{\mu}$ on both side of the first equation in \eqref{4.3} and use \eqref{4.4} to get
\begin{align}
\f{\dd }{\dd t}\langle f,\sqrt{\mu}\rangle=\langle g,\sqrt{\mu}\rangle+\langle r,\sqrt{\mu}\rangle_{\g_-}=0,\nonumber
\end{align}
which implies, for all $t>0$, that
$$\langle f(t),\sqrt{\mu}\rangle=\langle f_0,\sqrt{\mu}\rangle=0.
$$
Therefore, the same as for obtaining \eqref{3.2.27}, one can find a function $\mathfrak{e}(t)\lesssim \|f(t)\|_{L^2}^2$, such that
\begin{align}\label{4.10}
\int_0^t\|\nu^{1/2}Pf(s)\|_{L^2}^2\dd s\leq& \mathfrak{e}(t)-\mathfrak{e}(0)+C\int_0^t\|\nu^{1/2}(I-P)(s)\|_{L^2}^2\dd s
+\int_0^tC\|\nu^{-1/2}g(s)\|_{L^2}^2\dd s\nonumber\\
&+C\int_0^t|r(s)|^2_{L^2_-}+|(I-P_{\g})f(s)|_{L^2_+}^2\dd s+C\|G\|_{L^{\infty}_{t,x}}\cdot\int_0^t\|\nu^{1/2}f(s)\|_{L^2}^2\dd s.
\end{align}
Combining \eqref{4.6}, \eqref{4.7}, \eqref{4.8}, \eqref{4.9} and \eqref{4.10}, we have, for sufficiently small $\|G\|_{L^{\infty}_{t,x}}$, that
\begin{align}\label{4.12}
\|f(t)\|_{L^2}^2+\int_0^t\|\nu^{1/2}f(s)\|_{L^2}^2\dd s\leq C\|f_0\|_{L^2}^2+C\int_{0}^t\|\nu^{-1/2}g(s)\|_{L^2}^2\dd s+C\int_{0}^t|r(s)|_{L^2_-}^2\dd s.
\end{align}

\medskip
\noindent{\it Step 2.} $L^{\infty}$-estimate. In this step, we will prove the following

\medskip
\noindent{\it Claim:} Assume that $|X_w-1|_{C^2}$ is sufficiently small. It holds that
\begin{align}\label{4.12-1}
&\sup_{0\leq s\leq t}\|wf(s)\|_{L^{\infty}}+\sup_{0\leq s\leq t}|wf(s)|_{L^{\infty}_\pm}\nonumber\\
&\qquad\leq C\|wf_0\|_{L^{\infty}}+ C\sup_{0\leq s\leq t}\|\nu^{-1}wg(s)\|_{L^\infty}+C\sup_{0\leq s\leq t}|wr(s)|_{L^\infty_-}+\sup_{0\leq s\leq t}\|f(s)\|_{L^2}.
\end{align}

\noindent{\it Proof of Claim:} Fix $s\geq 0$. Take $T_0$ sufficiently large and $\|G\|_{L^\infty_{t,x}}$ suitably small so that Lemma \ref{lm3.2} holds. Then similar as \eqref{3.1.23}, it holds, for $t\in[s,s+T_0]$, that
\begin{align}\label{4.13}
&\|wf(t)\|_{L^\infty}+|wf(t)|_{L^{\infty}_\pm}\nonumber\\
&\quad\leq CT_0^{5/2}e^{-\nu_0(t-s)}\|wf(s)\|_{L^{\infty}}+CT_0^{5/4}\bigg\{\left(\f12\right)^{\hat{C}_2T_0^{\f54}}+\f1N\bigg\} \sup_{s\leq \tau\leq t}\|wf(\tau)\|_{L^{\infty}}\nonumber\\
&\qquad+CT_0^{5/2}\sup_{s\leq \tau\leq t}\{ \|\nu^{-1}wg(\tau)\|_{L^\infty}+|wr(\tau)|_{L^\infty_-}\}+C_{T_0}\sqrt{\int_s^{t}\left\|f(\tau)\right\|_{L^2}^2\dd \tau}\nonumber\\
&\quad\leq CT_0^{5/2}e^{-\nu_0(t-s)}\|wf(s)\|_{L^{\infty}}+CT_0^{5/4}\bigg\{\left(\f12\right)^{\hat{C}_2T_0^{\f54}}+\f1N\bigg\} \sup_{s\leq \tau\leq t}\|wf(\tau)\|_{L^{\infty}}\nonumber\\
&\qquad+CT_0^{5/2}\sup_{s\leq \tau\leq t}\{ \|\nu^{-1}wg(\tau)\|_{L^\infty}+|wr(\tau)|_{L^\infty_-}\}+C_{T_0}\sup_{s\leq \tau\leq t}\|f(\tau)\|_{L^2}.
\end{align}
Here the positive constant $C$ is independent of $T_0$ and $s$. Denote
$$\begin{aligned}
\CD(s):=&CT_0^{5/4}\bigg\{\left(\f12\right)^{\hat{C}_2T_0^{\f54}}+\f1N\bigg\} \sup_{s\leq \tau\leq s+T_0}\|wf(\tau)\|_{L^{\infty}}\\
&+CT_0^{5/2}\sup_{s\leq \tau\leq s+T_0}\{ \|\nu^{-1}wg(\tau)\|_{L^\infty}+|wr(\tau)|_{L^\infty_-}\}+C_{T_0}\sup_{s\leq \tau\leq s+T_0}\|f(\tau)\|_{L^2}.
\end{aligned}$$
Now for any $t>0$, there exists a positive integer $k$, such that $kT_0\leq t<(k+1)T_0$. Applying \eqref{4.13} to  $f(T_0)$, $f(2T_0),\cdots,$ $f(kT_0)$ inductively, we have:
\begin{align}\label{4.14}
\|wf(kT_0)\|_{L^{\infty}}&\leq CT_0^{5/2}e^{-\nu_0T_0}\|wf[(k-1)T_0]\|_{L^\infty}+\CD[(k-1)T_0]\nonumber\\
&\leq e^{-\f{\nu_0}{2}T_0}\|wf[(k-1)T_0]\|_{L^\infty}+\CD[(k-1)T_0]\nonumber\\
&\leq e^{-2\f{\nu_0}{2}T_0}\|wf[(k-2)T_0]\|_{L^\infty}+\CD[(k-1)T_0]+e^{-\f{\nu_0}{2}T_0}\CD[(k-2)T_0]\nonumber\\
&\leq \cdots\leq e^{-\f{k\nu_0T_0}{2}}\|wf_0\|_{L^{\infty}}+\sum_{i=0}^{k-1}e^{-\f{(k-1-i)\nu_0}{2}T_0}D(iT_0)\nonumber\\
&\leq e^{-\f{k\nu_0T_0}{2}}\|wf_0\|_{L^{\infty}}+CT_0^{5/4}\bigg\{\left(\f12\right)^{\hat{C}_2T_0^{\f54}}+\f1N\bigg\}\cdot\sup_{0\leq\tau\leq t}\|wf(\tau)\|_{L^{\infty}}\nonumber\\
&\quad+CT_0^{5/4}\sup_{0\leq\tau\leq t}\cdot\{\|\nu^{-1}wg(\tau)\|_{L^\infty}+|wr(\tau)|_{L^\infty_-}\}+C_{T_0}\cdot\sup_{0\leq \tau\leq t}\|f(\tau)\|_{L^2},
\end{align}
for suitably large $T_0>0$. Taking $s=kT_0$ in \eqref{4.13} and substituting \eqref{4.14} into the resultant equation, we have
\begin{align}
&\sup_{0\leq s\leq t}\|wf(s)\|_{L^{\infty}}+\sup_{0\leq s\leq t}\|wf(s)\|_{L^{\infty}_\pm}\nonumber\\
&\quad\leq C\|wf_0\|_{L^{\infty}}+CT_0^{5/4}\bigg\{\left(\f12\right)^{\hat{C}_2T_0^{\f54}}+\f1N\bigg\} \cdot\sup_{0\leq s\leq t}\|wf(s)\|_{L^{\infty}}\nonumber\\
&\qquad+CT_0^{5/4}\cdot\sup_{0\leq s\leq t}\{ \|\nu^{-1}wg(s)\|_{L^\infty}+|wr(s)|_{L^\infty_-}\}+C_{T_0}\cdot\sup_{0\leq s\leq t}\|f(s)\|_{L^2}.\nonumber
\end{align}
Then \eqref{4.12-1} follows from taking both $T_0$ and $N$ suitably large.

\medskip
\noindent{\it Step 3.} Decay estimate. Let $\tilde{f}=e^{\lambda t}f$ with $\lambda>0$. Then the equation of $\tilde{f}$ reads as:
\begin{equation}\nonumber\left\{
\begin{aligned}
&\pa_t\tilde{f}+v_1\pa_{x}\tilde{f}+\sqrt{\mu}^{-1}G(t,x)\pa_{v_1}(\sqrt{\mu}\tilde{f})+L\tilde{f}=\lambda\tilde{f}+e^{\lambda t}g,\\
&\tilde{f}|_{\g_-}=P_{\g}\tilde{f}+e^{\lambda t}r,\\
&\tilde{f}(0,x,v)=f_0(x,v).\\
\end{aligned}\right.
\end{equation}
Notice that
$$\int_{0}^1\int_{\mathbb{R}^3}\tilde{f}\sqrt{\mu}\dd x\dd v=e^{\lambda t}\int_{0}^1\int_{\mathbb{R}^3}f\sqrt{\mu}\dd x\dd v=0.
$$
The applying \eqref{4.12} to $\tilde{f}$, we have
\begin{align}
\|e^{\lambda t}f(t)\|^2_{L^2}+\int_0^t\|\nu^{1/2}e^{\lambda s}f(s)\|^2_{L^2}\dd s\leq& C\|{f}_0\|^2_{L^2}+C\lambda^2\int_0^t\|e^{\lambda s}f(s)\|_{L^2}^2\dd s
\nonumber\\
&+C\int_{0}^t\|\nu^{-1/2}e^{\lambda s }g(s)\|_{L^2}^2+|e^{\lambda s}r(s)|_{L^2_-}^2\dd s.\nonumber
\end{align}
Taking $\lambda>0$ suitably small, we have
\begin{align}
\|f(t)\|_{L^2}^2\leq Ce^{-2\lambda t}\|f_0\|_{L^2}^2+C\int_{0}^te^{-2\lambda(t-s)}\{\|\nu^{-1/2}g(s)\|_{L^2}^2+|r(s)|_{L^2_-}^2\}\dd s,\nonumber
\end{align}
which implies, for any $0<\lambda_1<\lambda$, that
\begin{align}\label{4.15}
\|f(t)\|_{L^2}&\leq Ce^{-\lambda_1t}\|f_0\|_{L^2}+Ce^{-\lambda_1t}\sup_{0\leq s\leq t }\{\|\nu^{-1/2}e^{\lambda_1s}g(s)\|_{L^2}+|e^{\lambda_1s}r(s)|_{L^{2}_-}\}\nonumber\\
&\leq Ce^{-\lambda_1t}\|wf_0\|_{L^\infty}+Ce^{-\lambda_1t}\sup_{0\leq s\leq t}\{\|\nu^{-1}we^{\lambda_1s}g(s)\|_{L^\infty}+|we^{\lambda_1s}r(s)|_{L^{\infty}_-}\}.
\end{align}
Similarly, applying \eqref{4.12-1} to $e^{\lambda_1t}f$, we have
\begin{align}
&\sup_{0\leq s\leq t}\|e^{\lambda_1 s}wf(s)\|_{L^{\infty}}+\sup_{0\leq s\leq t}|e^{\lambda_1 s}wf(s)|_{L^{\infty}_\pm}\nonumber\\
&\quad\leq C \|wf_0\|_{L^{\infty}}+ C\sup_{0\leq s\leq t}\|\nu^{-1}we^{\lambda_1 s}g(s)\|_{L^\infty}+C\sup_{0\leq s\leq t}|we^{\lambda_1 s}r(s)|_{L^\infty_-}+C\sup_{0\leq s\leq t}\|e^{\lambda_1s}f(s)\|_{L^2}\nonumber\\
&\quad\leq C\|wf_0\|_{L^{\infty}}+ C\sup_{0\leq s\leq t}\|\nu^{-1}we^{\lambda_1 s}g(s)\|_{L^\infty}+C\sup_{0\leq s\leq t}|we^{\lambda_1 s}r(s)|_{L^\infty_-},\nonumber
\end{align}
where we have used \eqref{4.15} in the last inequality. Therefore, the estimate \eqref{4.5} follows and the proof of Proposition \ref{prop4.1} is completed.
\end{proof}

\noindent{\it Proof of Theorem \ref{thm1.1}:} The existence of the time-periodic solution of \eqref{1.2.2}, \eqref{1.2.3}, \eqref{mass1} has been proved in Theorem \ref{thm1.2}. Therefore, it suffices to show that $F\geq 0.$ To do this, we first prove that such a time-periodic solutions is exponentially stable under the dynamics of \eqref{4.1}, \eqref{4.2}, \eqref{4.2-1}. The global solution is constructed via the following iteration
\begin{equation}
\left\{
\begin{aligned}
&\pa_tf^{n+1}+v_1\pa_xf^{n+1}+\f1{\sqrt{\mu}}G(t,x)\pa_{v_1}(\sqrt{\mu}f^{n+1})+Lf^{n+1}=\Gamma(f^{\text{per}}+f^{n},f^{n})+\Gamma(f^{n},f^{\text{per}}),\\
&f^{n+1}|_{\g_-}=P_\g f^{n+1}+\frac{\sqrt{2\pi}(\mu_{w}-\mu)}{\sqrt{\mu}}\int_{v'\cdot n(x)>0}f^n\sqrt{\mu} \{v'\cdot n(x)\}\dd v',\\
&f^{n+1}(0,x,v)=f_0(x,v),\quad f^0\equiv0.\nonumber
\end{aligned}\right.
\end{equation}
A direct computation shows that
$$P_{\g}\left\{\frac{\sqrt{2\pi}(\mu_{w}-\mu)}{\sqrt{\mu}}\int_{v'\cdot n(x)>0}f^n\sqrt{\mu} \{v'\cdot n(x)\}\dd v'\right\}\equiv0,
$$
and
$$\left|w\left\{\frac{\sqrt{2\pi}(\mu_{w}-\mu)}{\sqrt{\mu}}\int_{v'\cdot n(x)>0}f^n\sqrt{\mu} \{v'\cdot n(x)\}\dd v'\right\}\right|_{L^{\infty}_-}\leq C\delta|f^{n}|_{L^{\infty}_+}.
$$
Then applying the linear decay estimate \eqref{4.5} to $f^{n+1}$ and using \eqref{g}, we have, for any $t>0$, that
\begin{align}
&\sup_{0\leq s\leq t}\|e^{\lambda_1s}wf^{n+1}(s)\|_{L^{\infty}}+\sup_{0\leq s\leq t}|we^{\lambda_1s}f^{n+1}(s)|_{L^{\infty}_\pm}\nonumber\\
&\quad\leq C \|wf_0\|_{L^{\infty}}+C\delta\sup_{0\leq s\leq t}\{\|we^{\lambda_1s}f^n(s)\|_{L^{\infty}}+|we^{\lambda_1s}f^n(s)|_{L^{\infty}_\pm}\}
+C\sup_{0\leq s\leq t}\|e^{\lambda_1 s}wf^n(s)\|_{L^{\infty}}^2.\nonumber
\end{align}
The same as in \eqref{3.3.7}, we can also use an inductive argument to show that
$$
\sup_{0\leq s\leq t}\|we^{\lambda_1s}f^{n+1}(s)\|_{L^{\infty}}+|we^{\lambda_1s}f^{n+1}(s)|_{L^{\infty}_\pm}\leq 2C \|wf_0\|_{L^{\infty}}, \quad\text{for}\quad j=0,1,\cdots,
$$
provided that both $\delta>0$ and $\|wf_0\|_{L^{\infty}}$ suitably small. Similarly, we can also show that $\{f^n\}_{n=1}^{\infty}$ is a Cauchy sequence. Then the solution is obtained by taking the limit $n\rightarrow+\infty$. It is straightforward to verify that the solution $f$ satisfies
\begin{equation}
\|wf(t)\|_{L^{\infty}}\leq Ce^{-\lambda_1t}\|wf_0\|_{L^{\infty}}.\nonumber
\end{equation}
Then the non-negativity of $F^{\text{per}}$ follows from the same argument as in \cite{EGKM2}. We omit here for brevity.
Notice that \eqref{1.1.3} follows from a change of coordinates in terms of \eqref{1.2.1}. Therefore, we have completed the proof of Theorem \ref{thm1.1}. $\hfill\Box$\\

\noindent{\bf Acknowledgments.}  Renjun Duan was partially supported by the General Research Fund (Project No.14301720) and the Direct Grant (Project No.4053213) from CUHK. Zhu Zhang started to work on this project during his Ph.D studies in The Chinese University of Hong Kong. He would like to thank the University for providing the great working environment and support.

\end{document}